\theoremstyle{plain}
\newtheorem{theorem}{Theorem}[section]
\newtheorem{prop}[theorem]{Proposition}
\newtheorem{lemma}[theorem]{Lemma}
\newtheorem{coro}[theorem]{Corollary}
\theoremstyle{definition}
\newtheorem{remark}[theorem]{Remark}
\newcommand{\p}{\mathbf{p}}
\newcommand{\R}{{\mathbb R}}
\newcommand{\N}{{\mathbb N}}
\newcommand{\mc}{\mathcal}
\newcommand{\dd}{\,\mathrm{d}}
\newcommand{\ts}{\hspace{0.5pt}}
\newcommand{\TT}{\mathbb{T}}
\DeclareMathSymbol{\varnothing}{\mathord}{AMSb}{"3F}
\begin{document}

\title{Fast dimension spectrum for a potential with a logarithmic singularity}

\author{Philipp Gohlke}
\address{Lund University, Centre for Mathematical Sciences, \newline
\hspace*{\parindent}Box 118, 221 00 Lund, Sweden}
\email{philipp\_nicolai.gohlke@math.lth.se}

\author{Georgios Lamprinakis}
\address{Lund University, Centre for Mathematical Sciences, \newline
\hspace*{\parindent}Box 118, 221 00 Lund, Sweden}
\email{georgios.lamprinakis@math.lth.se}

\author{J\"{o}rg Schmeling}
\address{Lund University, Centre for Mathematical Sciences, \newline
\hspace*{\parindent}Box 118, 221 00 Lund, Sweden}
\email{joerg@math.lth.se}

\begin{abstract}
We regard the classic Thue--Morse diffraction measure as an equilibrium measure for a potential function with a logarithmic singularity over the doubling map. Our focus is on unusually fast scaling of the Birkhoff sums (superlinear) and of the local measure decay (superpolynomial). 
For several scaling functions, we show that points with this behavior are abundant in the sense of full Hausdorff dimension. 
At the fastest possible scaling, the corresponding rates reveal several remarkable phenomena. There is a gap between level sets for dyadic rationals and non-dyadic points, and beyond dyadic rationals, non-zero accumulation points occur only within intervals of positive length. The dependence between the smallest and the largest accumulation point also manifests itself in a non-trivial joint dimension spectrum.
\end{abstract}

\keywords{multifractal analysis, unbounded potential, $g$-measure}

\subjclass[2010]{37D35, 37C45}

\maketitle

\section{Introduction and main results}

The study of potential functions $\psi$ over an expanding dynamical system $(X,T)$ and the corresponding equilibrium measures has a long and rich history; for a few classical references relevant for this work compare \cite{Bowen,PW,Ruelle}. 
If the potential function $\psi$ is sufficiently regular, the full strength of the thermodynamic formalism is applicable. Using standard results in multifractal analysis, this yields a detailed description of both the Birkhoff averages of the potential function and of the local dimensions of the equilibrium measure. More precisely, one considers
\[
b_{\psi}(x) = \lim_{n \to \infty} \frac{1}{n} S_n \psi(x), \quad S_n \psi(x) = \sum_{m=0}^{n-1} \psi(T^m x),
\]
and the corresponding dimension spectrum, which is given by the Hausdorff dimension of the corresponding level sets,
\[
f_{\psi}(\beta) = \dim_H \{x \in X : b_{\psi}(x) = \beta \}.
\]
If $\psi$ is H\"{o}lder continuous (and the dynamical system sufficiently nice), the dimension spectrum $f_{\psi}$ is known to be given by a concave real analytic function, supported on a finite interval, outside of which the level sets are empty \cite{PW}. In this setting, the local dimension
\[
d_{\mu}(x) = \lim_{r\to 0} \frac{\log \mu(B_r(x))}{\log(r)}
\]
 of the unique equilibrium measure $\mu$ coincides with the Birkhoff average $b_{\psi}(x)$ up to a constant (whenever any of the limits exists). A multifractal analysis of $d_{\mu}$ is therefore obtained along the same lines.\\
Over the last decades, similar results have been established under less restrictive regularity assumptions. 
At the same time, the study of singular (or unbounded) potentials has gained increased attention. In the presence of a singularity, the dimension spectra can be positive on a half-line and the points with infinite Birkhoff averages (or infinite local dimensions of the equilibrium measure) may have full Hausdorff dimension. In this case, a more complete understanding can be obtained by renormalizing the Birkhoff sums (or the measure decay on shrinking balls) with a more quickly increasing function. This was studied for the specific case of the Saint-Petersburg potential in \cite{KLRW} and in the context of continued fraction expansions; see for example \cite{FLWW,LR}.

In this note, we contribute to the study of singular potentials and their equilibrium measures via a case study of the Thue--Morse (TM) measure. This measure  was one of the first examples of a singular continuous measure, exhibited by Mahler almost a century ago \cite{Mahler}. To this day, it is of interest in number theory and the study of substitution dynamical systems and continues to be the object of active research---compare the review \cite{Queff} for a collection of recent results and open questions. It can be written as an infinite Riesz product on the torus $\TT$ (identified with the unit interval) via
\[
\mu_{\operatorname{TM}} = \prod_{m=0}^{\infty} \bigl( 1 - \cos(2\pi 2^m x) \bigr),
\]
to be understood as a weak limit of absolutely continuous probability measures. 
The TM-measure falls into the class of $g$-measures \cite{Keane}, most recently renamed ``Doeblin measures" in \cite{BCJOE}, giving credit to the pioneering role of Doeblin and Fortet \cite{DF}. This class of measures had an important role in fueling the development of the thermodynamic formalism, largely due to the contributions by Walters \cite{WaltersI,WaltersII} and Ledrappier \cite{Ledrappier}.
The term ``$g$-measure" is related to the observation that $\mu_{\operatorname{TM}}$ can be constructed by tracing a (normalized) function $\widetilde{g}$, in this case given by
\[
\widetilde{g} \colon \TT \to [0,1], 
\quad \widetilde{g}(x) = \frac{1}{2} (1 - \cos(2 \pi x)),
\]
along the doubling map $T \colon x \mapsto 2x \mod 1$; see Section~\ref{SEC:Birkhoff-and-measure-estimates} for details and a formal definition of the term $g$-measure in our setting. 

The doubling map $(\TT,T)$ is closely related to the full shift $(\mathbb{X},\sigma)$, with $\mathbb{X} = \{0,1\}^\N$ and $\sigma(x)_n = x_{n+1}$ via the (inverse) binary representation $\pi_2 \colon (x_n)_{n \in \N} \mapsto \sum_{n=1}^{\infty} x_n 2^{-n}$, which semi-conjugates the action of $\sigma$ and $T$. The map $\pi_2$ is $2$-to-$1$ on the set $\mc D$ of sequences that are eventually constant (preimages of dyadic rationals), and $1$-to-$1$ everywhere else. Since the dyadic rationals are countable and hence a nullset of $\mu_{\operatorname{TM}}$, we can uniquely lift $\mu_{\operatorname{TM}}$ to a measure $\mu$ on $\mathbb{X}$ satisfying 
\[
\mu_{\operatorname{TM}} = \mu \circ \pi_2^{-1}.
\]
We adopt a standard choice for the metric on $\mathbb{X}$, given by $d(x,y) = 2^{-k+1}$ whenever $k$ is the smallest integer with $x_k \neq y_k$. We also employ for every finite word $w \in \{0,1\}^n$ and $n \in \N$ the cylinder set notation
$
[w] = \{x \in \mathbb{X} : x_{1} \cdots x_n = w_1 \cdots w_n \}.
$
The choice to work with $(\mathbb{X},\sigma)$ instead of $(\TT,T)$ is purely conventional and mostly made for the sake of a simpler exposition. All of the results presented in this section hold just the same over the torus and the proof works in the same way with a few minor adaptations.
 
The close relation between $\mu$ and $\widetilde{g}$ alluded to earlier, persists in a thermodynamic description of $\mu$. Indeed, due to a classical result by Ledrappier \cite{Ledrappier}, $\mu$ can alternatively be characterized as the unique equilibrium measure of the potential function
\[
\psi \colon \mathbb{X} \to [-\infty,\infty), \quad
x \mapsto \log \widetilde{g}(\pi_2(x)),
\]
which has a singularity at the preimages of the origin, $x = 0^{\infty}$ and $x=1^{\infty}$.
A multifractal analysis for the Birkhoff averages $b_{\psi}$ and the local dimensions $d_{\mu}$ was performed in \cite{BGKS,FSS}. There it was shown in particular that the level sets 
\[
 \bigl\{ x \in \mathbb{X} : d_{\mu}(x) = \alpha \bigr\},
 \quad \bigl\{ x \in \mathbb{X} : b_{\psi}(x) = - \log(2) \alpha \bigr\}
\]
have full Hausdorff dimension as soon as $\alpha \geqslant 2$.
This supports the idea that a superpolynomial scaling of the the TM measure (and a superlinear growth of the Birkhoff sums) is in some sense typical for the TM measure. We pursue this idea in the following.

Since the ball of radius $2^{-n}$ around $x \in \mathbb{X} $ is given by $C_n(x):=[x_1\cdots x_n]$, we may also write the local dimension of the measure $\mu$ as
\[
d_{\mu}(x) = \lim_{n \to \infty} \frac{\log \mu(C_n(x))}{ - n \log 2},
\]
provided that the limit exists. 
The equilibrium state can be expected to avoid the singularities at the preimages of the origin (which are also fixed points of the dynamics). It is therefore reasonable to expect the fastest possible decay rate for $\mu$ at these positions. 
Given $\pi_2(x) = 0$, it was already observed in \cite{GL90} (for more refined estimates see also \cite{BCEG,BG19}) that
\begin{equation*}
\lim_{n \to \infty} \frac{\log \mu(C_n(x))}{- n^2 \log 2} = 1.
\end{equation*}
The same conclusion holds in fact for $x\in \mc D$, the preimages of dyadic rationals \cite{G-thesis} (and no other points, as we will see below).
However, this is a countable set of vanishing Hausdorff dimension. It seems natural to inquire if sets of non-trivial Hausdorff dimension occur if $n^2$ is replaced by a different scaling function.

When it comes to the Birkhoff sums, choosing $x \in \mc D$ immediately gives $S_n \psi(x) = - \infty$ for large enough $n$, so we will not get a finite result for \emph{any} scaling function. However, as long as $x \notin \mc D$, we will obtain
\begin{equation*}
\liminf_{n \to \infty} \frac{-S_n \psi(x)}{n^2 \log 2} \leqslant 1,
\end{equation*}
and in this sense the fastest possible scaling for $S_n \psi$ is also given by $n^2$. We may interpolate between the linear and quadratic scaling via the scaling function $n^{\gamma}$ for some $\gamma \in (1,2)$. It turns out that the points with such an intermediate scaling have full Hausdorff dimension.

\begin{theorem}
\label{THM:n-gamma-scaling}
For each $\gamma \in (1,2)$ and $\alpha \geqslant 0$, the level sets
\[
\biggl \{ x \in \mathbb{X} : \lim_{n \to \infty} \frac{\log \mu(C_n(x))}{-n^{\gamma} \log 2} = \alpha \biggr\},
\quad \biggl\{ x \in \mathbb{X}: \lim_{n \to \infty} \frac{-S_n \psi(x)}{n^{\gamma} \log 2} = \alpha \biggr\}
\]
have Hausdorff dimension $1$.
\end{theorem}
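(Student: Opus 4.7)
The potential $\psi$ diverges to $-\infty$ exactly when $\pi_2(x)$ is close to $0$ or $1$, which in symbolic coordinates means $x$ begins with a long constant run. Since $\widetilde g(y) \sim 2\pi^2 y^2$ near $y = 0$ (and symmetrically near $y = 1$), a word starting with $0^\ell 1\cdots$ or $1^\ell 0 \cdots$ satisfies $\psi(x) = -2\ell \log 2 + O(1)$. Summing $\psi$ over the $\ell$ shifts internal to a maximal constant run of length $\ell$ contributes $-\ell^2 \log 2 + O(\ell)$, so $-S_n\psi(x)/\log 2$ equals, up to an $O(n)$ error, the sum of squared lengths of the maximal constant runs in $x_1\cdots x_n$. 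I will work with Birkhoff sums and transfer to the local-dimension spectrum through an estimate of the form $|\log \mu(C_n(x)) - S_n\psi(x)| = o(n^\gamma)$ that I assume is available from the preceding sections.

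\medskip
\noindent\textbf{The case $\alpha = 0$.} Let $X_L \subset \mathbb{X}$ be the subshift of finite type consisting of sequences with no constant run of length exceeding $L$. Standard Perron--Frobenius estimates yield $\dim_H X_L \to 1$ as $L \to \infty$, while on $X_L$ the potential is bounded and hence $|S_n \psi(x)| \leqslant C_L n = o(n^\gamma)$ for every $\gamma > 1$. A countable union over $L$ therefore produces a subset of the level set of Hausdorff dimension $1$.

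\medskip
\noindent\textbf{The case $\alpha > 0$.} I would construct a Moran-type subset $K_\alpha$ by partitioning $\mathbb{N}$ into blocks $B_k = (N_{k-1}, N_k]$ with $L_k := N_k - N_{k-1} = \lfloor N_{k-1}^{1-\delta}\rfloor$ for some fixed $\delta \in (0, (2-\gamma)/3)$. Inside $B_k$, place at evenly spaced positions exactly
\[
r_k = \bigl\lfloor \alpha (N_k^\gamma - N_{k-1}^\gamma)/\ell_k^2 \bigr\rfloor
\]
copies of the forced word $1\, 0^{\ell_k}\, 1$, with $\ell_k = \lceil N_{k-1}^{\gamma - 1 + \delta}\rceil$; the remaining positions are free, subject only to forbidding free constant runs longer than $\lfloor \log L_k \rfloor$. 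By design, the forced-run contribution to $-S_{N_k}\psi$ equals $(\alpha + o(1)) N_k^\gamma \log 2$, while the run restriction bounds the accidental contribution by $O(n (\log n)^2) = o(n^\gamma)$. Even spacing of the forced runs within $B_k$ makes the intermediate Birkhoff sum at $n = N_{k-1} + t$ linearly interpolate the block-boundary targets, and convexity of $n \mapsto n^\gamma$ gives an interpolation error of order $L_k^2 N_{k-1}^{\gamma - 2} = N_{k-1}^{\gamma - 2\delta} = o(N_{k-1}^\gamma)$; thus $-S_n \psi(x)/(n^\gamma \log 2) \to \alpha$ holds for every $x \in K_\alpha$. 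The density of forced positions in $B_k$ is $r_k(\ell_k + 2)/L_k = O(N_{k-1}^{-\delta}) \to 0$, so the admissible tree of $K_\alpha$ carries $2^{(1 - o(1))N_k}$ prefixes at level $N_k$, and uniform Bernoulli weights on the free coordinates give a Frostman measure of every exponent $s < 1$; the mass distribution principle yields $\dim_H K_\alpha = 1$.

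\medskip
\noindent\textbf{Main obstacle.} The delicate point is controlling $-S_n\psi$ at times $n$ \emph{not} of the form $N_k$: the construction directly fixes the Birkhoff sum only at block boundaries, yet the statement demands a genuine limit for every $n$. This forces the sub-linear schedule $L_k = N_{k-1}^{1-\delta}$ (rather than the simpler geometric $N_k = 2N_{k-1}$) together with the even placement of forced runs, both being needed to keep the chord-versus-curve error strictly below $n^\gamma$. A secondary obstacle is preventing accidental long runs in the free portions from generating superlinear noise; the $\log L_k$ cap handles this, but must be loose enough to preserve full Hausdorff dimension and tight enough to stay subordinate to the forced-run contribution.
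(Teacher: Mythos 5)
Your construction is essentially the paper's: plant long constant runs at sparsely prescribed positions so that the sum of squared run lengths tracks $\alpha n^{\gamma}$, cap all remaining runs, and deduce full dimension from the vanishing density of determined positions (the paper uses a single run of length $c_k\sim\sqrt{r\gamma\alpha}\,k^{(r\gamma-1)/2}$ placed just before $\theta_k=k^r$ together with its density lemma, where you use many evenly spaced shorter runs per block and a Frostman measure; the mechanism and the parameter constraints are the same). One caveat: the comparison $|\log\mu(C_n(x))-S_n\psi(x)|=o(n^{\gamma})$ that you ``assume is available'' is \emph{false} for general $x$ --- by the paper's Proposition~2.5 the discrepancy is of order $r_{m+1}s_{m+1}$, which can be $\asymp n^2$ when the run containing position $n$ has length comparable to $n$. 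It does hold on your sets $K_\alpha$ (and on the bounded-run sets for $\alpha=0$) precisely because you cap run lengths by $\ell_k=O\bigl(N_{k-1}^{\gamma-1+\delta}\bigr)$ with $2(\gamma-1+\delta)<\gamma$, so you should state and prove it as a property of the constructed sets rather than of $\mathbb{X}$; with that repair the argument goes through.
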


In this sense, $n^2$ is the critical scaling, at least for phenomena that can be distinguished via Hausdorff dimension. We will therefore focus on accumulation points for this particular scaling in the following.

Although the relation between $S_n \psi(x)$ and $\mu(C_n(x))$ is not as simple as in the H\"{o}lder continuous case, their asymptotic behavior is still closely related. In fact, both expressions can be controlled via an appropriate recoding of $x \in \mathbb{X}$. As long as $x \notin \mc D$, its binary representation can be uniquely written in an alternating form as
$
x = a^{n_1} b^{n_2} a^{n_3} b^{n_4} \ldots,
$
where $a,b \in \{0,1\}$ with $a\neq b$ and $n_i \in \N$ for all $i \in \N$. With this notation, the \emph{alternation coding} is a map $\tau \colon \mathbb{X}\setminus \mc D \to \N^{\N}$, given by
\[
\tau \colon a^{n_1} b^{n_2} a^{n_3} b^{n_4} \ldots
\mapsto n_1 n_2 n_3 n_4\ldots.
\]
Given $x \in \mathbb{X} \setminus \mc D$ with $\tau(x) = (n_i)_{i \in \N}$, we define
\[
F_m(x) = \frac{1}{N_m(x)^2} \sum_{i=1}^m n_i^2,
\quad N_m(x) = \sum_{i=1}^m n_i,
\]
for all $m \in \N$. For notational convenience, we also set $\overline{F}(x) = \limsup_{m \to \infty} F_m(x)$ and $\underline{F}(x) = \liminf_{m \to \infty} F_m(x)$. The role of this sequence of functions is clarified by the following result.

\begin{prop}
\label{PROP:accumulation-gaps}
Given $x \in \mathbb{X} \setminus \mc D$, let $\underline{F}(x) = \alpha$ and $\overline{F}(x) = \beta$. 
Then,
\[
\liminf_{n \to \infty} \frac{\log \mu(C_n(x))}{- n^2 \log 2} = \frac{\alpha}{1+\alpha}, \quad
\limsup_{n \to \infty} \frac{\log \mu(C_n(x))}{- n^2 \log 2} = \beta,
\]
and 
\[
\liminf_{n \to \infty} \frac{- S_n \psi(x)}{n^2 \log 2} = \alpha, \quad
\limsup_{n \to \infty} \frac{- S_n \psi(x)}{n^2 \log 2} = \frac{\beta}{1-\beta}.
\]
\end{prop}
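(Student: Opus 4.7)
The plan is to reduce both quantities to concrete rational functions on each block of the alternation coding, and to read off the liminf and limsup from their extrema. I will invoke the asymptotic estimates obtained in Section~\ref{SEC:Birkhoff-and-measure-estimates}, which for $n = N_{m-1}(x) + k$ with $0 \leq k \leq n_m$ take the form
\begin{align*}
\frac{-S_n \psi(x)}{\log 2} &= \sum_{i=1}^{m-1} n_i^2 + 2 k n_m - k^2 + O(n), \\
\frac{-\log \mu(C_n(x))}{\log 2} &= \sum_{i=1}^{m-1} n_i^2 + k^2 + O(n).
\end{align*}
Both rely on the logarithmic singularity of $\widetilde{g}$: $\log g(\sigma^j x) = -2\ell \log 2 + O(1)$ whenever $\sigma^j x$ begins with exactly $\ell$ identical symbols. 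For the Birkhoff sum, $x$'s own tail contributes $n_m - l$ such symbols at position $N_{m-1} + l$. For the measure one expands
\[
\mu(C_n(x)) = \int \prod_{j=0}^{n-1} g\bigl(\sigma^j(x_1 \cdots x_n \cdot y)\bigr) \dd\mu(y)
\]
via the $g$-measure identity; within the partial block of $x$ this count becomes $(k - l) + Y(y)$, where $Y(y)$ is the length of the initial run of $y$. The asymptotic $\mu([c^Y \bar c]) \asymp 2^{-Y^2}$ then yields $\int 2^{-2kY(y)} \dd\mu(y) = 1 + O(2^{-2k})$, so only the $(k - l)$ part survives the summation, producing the $k^2$ term in place of $2kn_m - k^2$.

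After dividing by $n^2 \log 2$ the $O(n)$ corrections are negligible, so for each block it suffices to analyse
\[
\varphi_B(k) = \frac{s + 2kN - k^2}{(S + k)^2}, \qquad \varphi_M(k) = \frac{s + k^2}{(S + k)^2}, \qquad 0 \leq k \leq N,
\]
with $s = \sum_{i<m} n_i^2$, $S = N_{m-1}$ and $N = n_m$. Both agree at the endpoints, $\varphi_B(0) = \varphi_M(0) = F_{m-1}(x)$ and $\varphi_B(N) = \varphi_M(N) = F_m(x)$. A direct calculation shows $\varphi_B'$ vanishes only at $k_B^\ast = (NS - s)/(N+S)$ and $\varphi_M'$ only at $k_M^\ast = s/S$; setting $p_m := n_m/N_{m-1}$, both critical points lie in $[0, N]$ precisely when $p_m \geq F_{m-1}$ (\emph{Case A}), in which case $\varphi_B(k_B^\ast) = F_m/(1-F_m)$ is the block maximum of $\varphi_B$ and $\varphi_M(k_M^\ast) = F_{m-1}/(1+F_{m-1})$ is the block minimum of $\varphi_M$. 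In the complementary \emph{Case B} both functions are strictly monotone decreasing on $[0, N]$; one checks that still $F_m \geq F_{m-1}/(1+F_{m-1})$ and $F_{m-1} \leq F_m/(1-F_m)$.

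Two of the four identities follow directly from endpoint behaviour: $\liminf_n \varphi_B = \alpha$, since the block minimum of $\varphi_B$ never exceeds $F_m$, and $\limsup_n \varphi_M = \beta$, since the block maximum of $\varphi_M$ never exceeds $\max(F_{m-1},F_m)$. Monotonicity of $t \mapsto t/(1 \mp t)$ combined with the estimates of the previous paragraph then yields $\limsup_n \varphi_B \leq \beta/(1-\beta)$ and $\liminf_n \varphi_M \geq \alpha/(1+\alpha)$. The main technical obstacle is the matching lower bound on $\limsup_n \varphi_B$ and upper bound on $\liminf_n \varphi_M$, which requires Case A to occur infinitely often along subsequences realising $F_m \to \beta$ (resp.\ $F_{m-1} \to \alpha$). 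The key input is the recursion $F_m = (F_{m-1} + p_m^2)/(1 + p_m)^2$, which in Case B yields
\[
F_{m-1} - F_m \;\geq\; \frac{p_m F_{m-1}(1 + F_{m-1})}{(1 + p_m)^2}.
\]
If Case A occurred only finitely often while $F_m$ stayed bounded away from $0$, then summability of the telescoping series $\sum_m (F_{m-1} - F_m)$ would force $\sum_m p_m < \infty$, contradicting $\log N_m = \sum_k \log(1 + p_k) \to \infty$. Hence Case A is infinite, and a finite backward traversal of consecutive Case-B blocks from any near-extremal index terminates at a Case-A index whose endpoint value is still arbitrarily close to $\alpha$ or $\beta$, producing the required bounds. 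The degenerate cases $\alpha = 0$ or $\beta = 0$ are absorbed by the trivial bounds, since then both sides of the relevant identity vanish.
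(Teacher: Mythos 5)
Your proposal is correct in substance and begins from the same reduction as the paper: the block-wise asymptotics of Proposition~\ref{PROP:psi-n-log-mu-bounds} turn both quantities into explicit rational functions of the position inside each alternation block, with endpoint values $F_{m-1}$ and $F_m$ and critical values $F_{m-1}/(1+F_{m-1})$ and $F_m/(1-F_m)$ (your formulas for $k_B^\ast$, $k_M^\ast$ and the Case~A condition $p_m\geqslant F_{m-1}$ all check out). Where you genuinely diverge is in showing the critical values are attained in the limit. The paper (Lemmas~\ref{LEM:h-mu-f-relation} and~\ref{LEM:h-psi-f-relation}) notes that the bound $\xi_n^{\mu}(x)\leqslant (F_m(x)+c^2)/(1+c)^2$, and its analogue for $\xi_n^{\psi}$, holds for \emph{every} $n=(1+c)N_m$ — not only when $n$ lies in the $m$-th block — by superadditivity of squares across block boundaries; one can therefore always evaluate near $c=F_m$ and no case distinction is needed. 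You instead stay inside each block, which forces the Case~A/Case~B dichotomy and the extra argument that Case~A recurs infinitely often along the extremal subsequences. That argument does work: your Case~B estimate $F_{m-1}-F_m\geqslant p_mF_{m-1}(1+F_{m-1})/(1+p_m)^2$ reduces to $(1-F_{m-1})(F_{m-1}-p_m)\geqslant 0$, and since $p_m\leqslant F_{m-1}\leqslant 1$ in Case~B the telescoping sum indeed forces $\sum_m p_m<\infty$ against $\sum_m\log(1+p_m)=\infty$. Two small repairs are needed. First, the traversal through consecutive Case-B indices must go \emph{backward} from a near-$\beta$ index (where $F$ increases but is eventually capped by $\beta+\varepsilon$) and \emph{forward} from a near-$\alpha$ index (where $F$ decreases but is eventually floored by $\alpha-\varepsilon$); a backward traversal from a near-$\alpha$ index, as literally stated, would drive $F$ up toward $\beta$ and lose the control you need on $F_{m'-1}$. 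Second, the critical points are not integers, so you should note that rounding costs only $O(N_{m-1}^{-2})$ (the second derivative at the critical point is of that order when the relevant $F$-values stay bounded away from $1$), an issue the paper handles with its $\lfloor F_mN_m\rfloor/N_m$ choice. In short: your route is longer but sound; the paper's one-sided inequality valid beyond the block is the shortcut that makes the case analysis unnecessary.
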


This has the following remarkable consequence.

\begin{coro}
Whenever the sequence $\log \mu(C_n(x))/n^2$ has a non-trivial accumulation point ($\neq 0$), the accumulation points form in fact an interval of strictly positive length. The same conclusion holds for the sequence $S_n \psi(x)/n^2$.
\end{coro}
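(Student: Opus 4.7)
My plan is to combine Proposition~\ref{PROP:accumulation-gaps} with a continuity argument: once $\liminf$ and $\limsup$ are known, it suffices to show that the increments of the relevant sequences vanish, so that the accumulation set is the whole interval between them. I will restrict attention to $x \in \mathbb{X}\setminus\mc D$, which is the only non-trivial regime: for $x\in\mc D$ the Birkhoff sums $S_n\psi(x)$ are eventually $-\infty$, and $\log\mu(C_n(x))/n^2$ converges to the single value $-\log 2$, so no non-degenerate interval of accumulation points can arise. Setting $\alpha = \underline F(x)$, $\beta = \overline F(x)$ and $B_n := -\log_2\mu(C_n(x))/n^2$, Proposition~\ref{PROP:accumulation-gaps} identifies $\liminf B_n = \alpha/(1+\alpha)$ and $\limsup B_n = \beta$. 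A non-trivial accumulation point of $\log\mu(C_n(x))/n^2$ is equivalent to $\beta > 0$; in that case $\alpha/(1+\alpha) < \beta$ automatically, since $\alpha/(1+\alpha) < \alpha \leq \beta$ when $\alpha > 0$ and $0 < \beta$ when $\alpha = 0$. So the candidate interval $[\alpha/(1+\alpha),\beta]$ already has strictly positive length, and only surjectivity remains to be verified.

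The key step is to show $B_{n+1}-B_n\to 0$. Writing $f(n) = -\log_2\mu(C_n(x))$, the expansion
\[
B_{n+1}-B_n = \frac{f(n+1)-f(n)}{(n+1)^2} - \frac{(2n+1)\,f(n)}{n^2(n+1)^2}
\]
reduces the task to two bounds. The second term is $O(1/n)$ since $f(n)=O(n^2)$ (by the boundedness of $B_n$). For the first term, the $g$-measure structure of $\mu$ forces $\mu(C_{n+1}(x))/\mu(C_n(x))$ to be comparable to $\widetilde g(\pi_2(T^n x))$; since $\widetilde g$ vanishes only quadratically at the origin, $|f(n+1)-f(n)|$ is of the order of the common-letter run length of $x$ starting at position $n+1$. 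For $x \notin \mc D$ this run length is bounded by the current block size $n_{m+1}$ in the alternation coding, and hence by $n$, so $|f(n+1)-f(n)| = O(n) = o(n^2)$ and the first term is also $O(1/n)$. The standard fact that a bounded sequence with vanishing increments has accumulation set equal to $[\liminf,\limsup]$ then closes the first statement. The Birkhoff version runs in exact parallel on $\tilde B_n := -S_n\psi(x)/(n^2 \log 2)$: the increment is driven by $\psi(T^n x)/((n+1)^2\log 2)$ up to an $O(1/n)$ correction, $|\psi(T^n x)| = O(n)$ follows from the same run-length bound, and Proposition~\ref{PROP:accumulation-gaps} yields the extremes $\alpha$ and $\beta/(1-\beta)$, which are distinct whenever $\beta > 0$.

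The hard part is the $O(n)$ bound on $|f(n+1) - f(n)|$ and $|\psi(T^n x)|$: these increments must be one order of magnitude smaller than the $n^2$ scale against which they are measured, despite $\psi$ being unbounded. This will rest on the quadratic vanishing of $\widetilde g$ at the origin and on quantitative control of the $g$-measure near its singularities, which is presumably the content of the estimates developed in Section~\ref{SEC:Birkhoff-and-measure-estimates}.
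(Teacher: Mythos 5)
Your overall strategy --- read off the $\liminf$ and $\limsup$ from Proposition~\ref{PROP:accumulation-gaps}, check that they differ whenever $\beta=\overline{F}(x)>0$, and then fill in the interval by showing that the increments vanish --- is the right one, and your treatment of the measure half is essentially correct: since $\mu(C_n(x))$ depends only on $x_1\cdots x_n$, the run that controls $f(n+1)-f(n)$ is the one counted \emph{backwards} from position $n+1$, whose length is at most $n$, so the increments of $B_n$ are indeed $O(1/n)$ and $B_n$ is bounded (by $1$ up to $o(1)$). You are also right that the statement must be read for $x\notin\mc D$ only; for dyadic $x$ the hypothesis holds with the single accumulation point $-\log 2$ and the conclusion fails, which is consistent with the paper's implicit convention that the corollary is drawn from Proposition~\ref{PROP:accumulation-gaps}.

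The Birkhoff half, however, has a genuine gap. You claim $|\psi(T^nx)|=O(n)$ ``by the same run-length bound'', but $\psi(\sigma^n x) = -2(N_{m+1}-n)\log 2+O(1)$ is governed by the \emph{forward} run starting at position $n+1$, which is bounded by $n_{m+1}$, not by $n$. If some block satisfies $n_{m+1}\gg N_m^2$, then at $n=N_m$ the increment of $\tilde B_n$ is of order $n_{m+1}/N_m^2$ and blows up; in that situation one also has $\overline{F}(x)=1$, so $\limsup_n(-S_n\psi(x)/(n^2\log 2))=\beta/(1-\beta)=\infty$ and $\tilde B_n$ is not even bounded --- your closing lemma is inapplicable on both counts. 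When $\beta<1$ your bound is true, but for a reason you do not supply: $F_{m+1}\geqslant (n_{m+1}/N_{m+1})^2$ forces $\limsup_m n_{m+1}/N_{m+1}\leqslant\sqrt{\beta}<1$, hence $n_{m+1}=O(N_m)=O(n)$. To close the argument you should (i) add that observation for the case $\beta<1$, and (ii) handle $\beta=1$ separately, e.g.\ via the block profiles $c\mapsto(F_m(x)-c^2)/(1-c)^2$ of Lemma~\ref{LEM:h-psi-f-relation}: within each block the sequence samples this continuous profile at spacing $1/N_m$, and the sampling becomes dense on the part of the range that survives in the limit, which is the mechanism the paper implicitly relies on (Proposition~\ref{PROP:psi-n-log-mu-bounds} and Figure~\ref{FIG:square-interpolation}) rather than a crude increment bound.
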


Also, we immediately obtain a gap result for dyadic vs non-dyadic points.

\begin{coro}
If $x \in \mc D$, then $S_n \psi(x) = -\infty$ for large enough $n$, and
\[
\lim_{n \to \infty} \frac{\log\mu(C_n(x))}{n^2} = - \log 2.
\]
In contrast, if $x \in \mathbb{X} \setminus \mc D$, then
\[
\limsup_{n \to \infty} \frac{\log\mu(C_n(x))}{n^2} \geqslant - \frac{1}{2} \log 2,
\quad 
\limsup_{n \to \infty} \frac{S_n \psi(x)}{n^2} \geqslant -1.
\]
\end{coro}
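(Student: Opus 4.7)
The argument splits naturally into the dyadic and non-dyadic cases, with Proposition~\ref{PROP:accumulation-gaps} doing the work in the latter. For $x \in \mc D$ the shift eventually reaches one of the fixed points $0^{\infty}$ or $1^{\infty}$; at both of these points $\widetilde g \circ \pi_2 = 0$ and therefore $\psi = -\infty$, so $S_n\psi(x) = -\infty$ for all sufficiently large $n$. The complementary statement $\log \mu(C_n(x))/n^2 \to -\log 2$ for $x \in \mc D$ is already recorded in the paragraph above the corollary, with references to \cite{GL90,G-thesis}, so I would simply quote it.

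For $x \in \mathbb X \setminus \mc D$, I set $\alpha = \underline F(x)$, $\beta = \overline F(x)$ and apply Proposition~\ref{PROP:accumulation-gaps}. Since $\log \mu(C_n(x)) \leqslant 0$ and $S_n \psi(x) \leqslant 0$, dividing by the positive quantity $-n^2 \log 2$ produces non-negative sequences, and the $\limsup$ in the corollary's normalization corresponds to the $\liminf$ in the proposition's normalization (times $-\log 2$). The proposition therefore rewrites as
\[
\limsup_{n \to \infty} \frac{\log \mu(C_n(x))}{n^2} \;=\; -\frac{\alpha}{1+\alpha} \log 2,
\qquad
\limsup_{n \to \infty} \frac{S_n \psi(x)}{n^2} \;=\; -\alpha \log 2.
\]
To close both advertised inequalities it is enough to show $\alpha \leqslant 1$: then $\alpha/(1+\alpha) \leqslant 1/2$ and $\alpha \log 2 \leqslant \log 2 < 1$.

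The bound $\underline F(x) \leqslant 1$ is elementary. For positive integers $n_1,\ldots,n_m$,
\[
\Bigl( \sum_{i=1}^m n_i \Bigr)^{\!2} \;=\; \sum_{i=1}^m n_i^2 + 2 \sum_{i<j} n_i n_j \;\geqslant\; \sum_{i=1}^m n_i^2,
\]
so $F_m(x) \leqslant 1$ for every $m$, and passing to the liminf gives $\alpha \leqslant 1$. There is no real hard step here: the whole corollary is a sign-and-bookkeeping exercise built on top of Proposition~\ref{PROP:accumulation-gaps}, and the only thing to keep track of is the swap between $\liminf$ and $\limsup$ when switching from the normalization $-n^2\log 2$ used in the proposition to the plain $n^2$ used in the statement.
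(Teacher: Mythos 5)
Your proof is correct and takes essentially the route the paper intends: the corollary is presented as an immediate consequence of Proposition~\ref{PROP:accumulation-gaps}, and your two added ingredients --- the observation that multiplying by the negative constant $-\log 2$ swaps $\liminf$ and $\limsup$ between the two normalizations, and the elementary bound $\underline{F}(x)\leqslant 1$ from $\bigl(\sum_{i} n_i\bigr)^2 \geqslant \sum_{i} n_i^2$ --- are exactly the bookkeeping the paper leaves implicit. One trivial slip in wording: $-n^2\log 2$ is negative, not positive (which is precisely why the $\liminf$/$\limsup$ swap you then carry out correctly does occur); the conclusions you draw from it are unaffected.
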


Due to the pointwise relation in Proposition~\ref{PROP:accumulation-gaps}, it suffices to focus on the accumulation points of $(F_m)_{m \in \N}$. These can be analysed via the \emph{joint (dimension) spectrum} of $\underline{F}$ and $\overline{F}$, given by
\[
(\alpha,\beta) \mapsto \dim_H\{ x : \underline{F}(x) = \alpha, \overline{F}(x) = \beta \},
\]
for $(\alpha,\beta) \in \R^2$.
More generally, we calculate the Hausdorff dimension of
\[
\{ (\underline{F}, \overline{F}) \in S \} : = 
\{ x \in \mathbb{X} \setminus \mc D : (\underline{F}(x), \overline{F}(x)) \in S \},
\]
for every subset $S \in \R^2$. Since all accumulation points of $(F_m)_{m \in \N}$ are in $[0,1]$, the pair $(\underline{F},\overline{F})$ is certainly contained in
\[
\Delta:= \{ (\alpha,\beta) \in [0,1]^2: \alpha \leqslant \beta \}.
\] 
It therefore suffices to consider sets $S \subset \Delta$.
We show that the joint spectrum is given by a function $f \colon \Delta \to [0,1]$, defined on $\Delta\setminus \{(0,0) \}$ as
\begin{equation}
\label{EQ:f-alpha-beta}
f(\alpha,\beta):=
\frac{\sqrt{\alpha \beta + \beta - \alpha} - \beta}{\sqrt{\alpha \beta + \beta - \alpha} + \sqrt{\alpha \beta}},
\end{equation}
see Figure~\ref{FIG:f-alpha-beta} for an illustration. A continuous extension of $f$ to $\Delta$ is not possible, since $f$ can take arbitrary values in $[0,1]$ as we approach the origin from different directions. We define $f(0,0) :=1$, which is the most adequate choice for our application below.

\begin{figure}
\begin{center}
\includegraphics[scale=0.4]{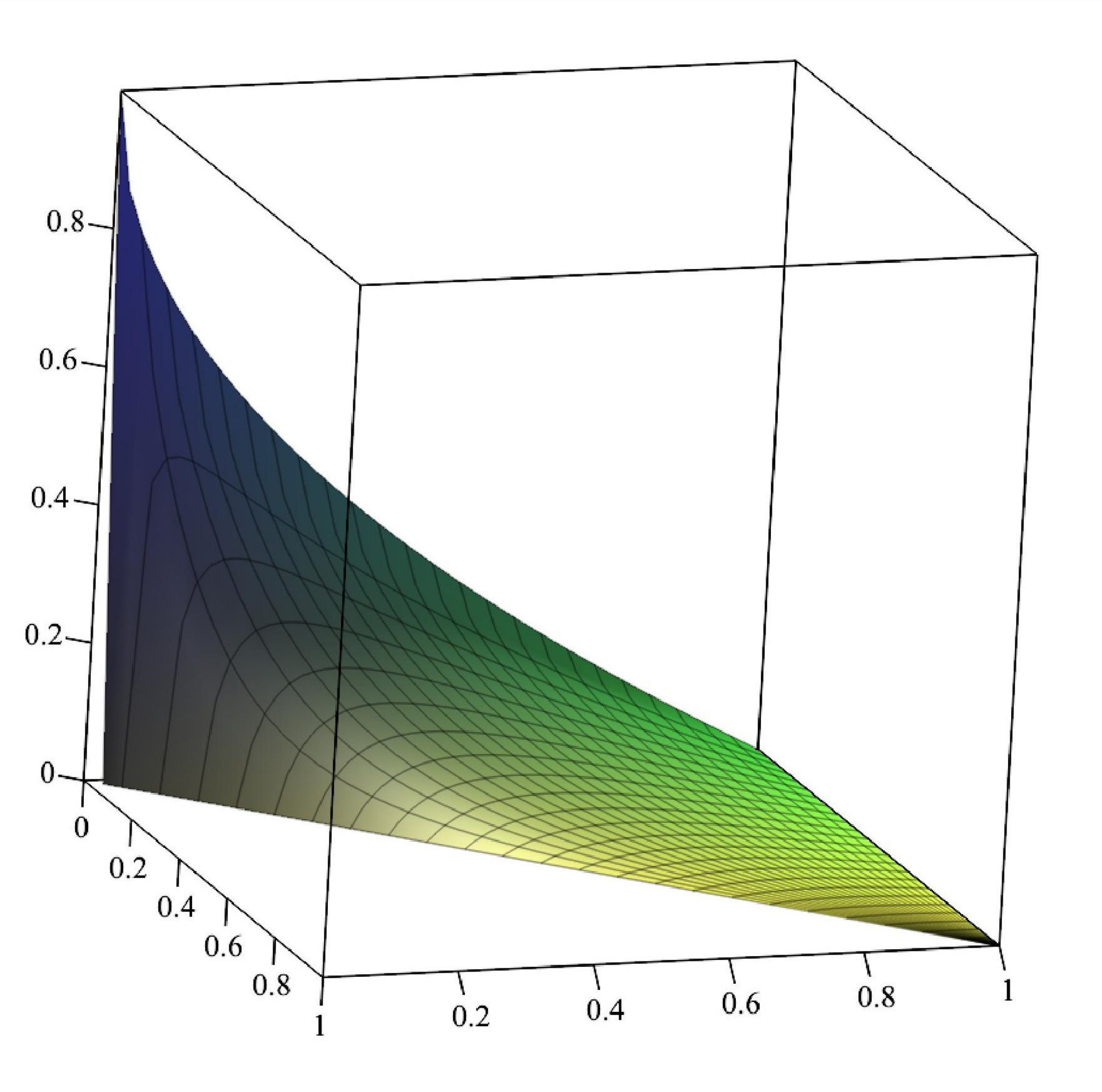}
\caption{The function $f\colon \Delta \to [0,1]$.}
\label{FIG:f-alpha-beta}
\end{center}
\end{figure}

\begin{theorem}
\label{THM:main}
Let $S \subset \Delta$. Then,
\[
\dim_H \{(\underline{F},\overline{F}) \in S \}
= \sup \{ f(\alpha,\beta) : (\alpha,\beta) \in S\}.
\]
In particular, $\dim_H\{\underline{F} = \alpha,\overline{F} = \beta \} = f(\alpha,\beta)$ for all $(\alpha,\beta) \in \Delta$.
\end{theorem}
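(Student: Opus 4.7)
Via the alternation coding $\tau$, the problem reduces to a combinatorial one on $\N^\N$, with the $\mathbb X$-metric translating so that the cylinder $[n_1\cdots n_j]$ has radius $2^{-N_j}$. Introduce the reference parameters $D := \sqrt{\alpha\beta + \beta - \alpha}$, $c := (\beta + D)/(1-\beta)$ (the positive root of $(\alpha + c^2)/(1+c)^2 = \beta$), and $L := (1+c)\sqrt{\beta/\alpha}$. Heuristically these govern a prototypical cycle: $F_m$ first jumps from $\alpha$ to $\beta$ by appending a single ``big'' block of size $c\cdot N$, then drifts back to $\alpha$ across a stretch of length $(L-1-c)\cdot N$ composed of small blocks. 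The proof has two parts: a Moran-type lower bound and an upper bound by efficient covers; I describe $0<\alpha<\beta<1$, the boundary cases requiring only minor variants.

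\textbf{Lower bound.} Build a probability measure $m_{\alpha,\beta}$ on $\mathbb X$ by prescribing its alternation coding inductively in cycles $k=1,2,\ldots$ starting at $N$-positions $a_k = a_1 L^{k-1}$: in cycle $k$, deterministically insert a block of length $\lfloor c a_k\rfloor$ (raising $F$ to $\approx\beta$), then append blocks i.i.d.\ from the geometric distribution on $\N$ with $q = \tfrac12$ (which saturates $H(p)/\mu = \log 2$, the maximal entropy rate per $\mathbb X$-symbol, matching Bernoulli$(\tfrac12)$) until the running position reaches $a_{k+1} = L a_k$. Since the small blocks contribute only $o(a_k^2)$ to $\sum n_i^2$ and the geometric has exponential tails, one checks $F_m \to \beta$ at the block indices right after the big jumps and $F_m \to \alpha$ at those closing the cycles, so $m_{\alpha,\beta}$ is supported on $\{\underline F = \alpha,\ \overline F = \beta\}$. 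The accumulated entropy up to position $(1+c)a_k$ is $\log 2\cdot (L-1-c)a_k/(L-1) + o(a_k)$ (only idle phases contribute), giving local dimension $(L-1-c)/((L-1)(1+c))$ there. Using $L-1-c = (1+c)(\sqrt{\beta/\alpha}-1)$ and the identity $(1-\beta)(D-\alpha) = (D-\beta)(1+D)$ (which reduces to $D^2 = \alpha\beta + \beta - \alpha$), this simplifies to $(D-\beta)/(D+\sqrt{\alpha\beta}) = f(\alpha,\beta)$. At positions inside idle phases the local dimension is strictly larger, so $\liminf_n -\log m_{\alpha,\beta}(C_n(x))/(n\log 2) = f(\alpha,\beta)$ $m_{\alpha,\beta}$-a.s., and the mass distribution principle yields $\dim_H\{\underline F = \alpha,\ \overline F = \beta\}\geq f(\alpha,\beta)$.

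\textbf{Upper bound and spectrum.} The crucial rigidity observation is that if blocks $n'_1,\ldots,n'_k$ sum to $\approx ca$ and their squares to $\approx c^2 a^2$, then $\sum (n'_i)^2 \leq (\max n'_i)\sum n'_i$ forces a unique big block of size $\approx ca$, with the rest contributing only $o(a^2)$ to the squared sum. Applied between consecutive $\alpha$- and $\beta$-visits of any $x\in A := \{\underline F = \alpha, \overline F = \beta\}$, this forces essentially one big block per cycle with the predicted geometric growth, so the non-big ``free'' symbols up to any scale $N$ occupy at most $N\cdot(L-1-c)/((L-1)(1+c)) = Nf(\alpha,\beta)$ positions. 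For $k\in\N$ set $\mathcal A_k := \{x\in A : \exists\, m \text{ with } N_m\in[2^k,2^{k+1}),\ F_m\in(\beta-\epsilon_k,\beta+\epsilon_k)\}$ with $\epsilon_k\downarrow 0$ suitably; then $A\subset \limsup_k \mathcal A_k$. Covering each $x\in\mathcal A_k$ by $C_{N_m(x)}(x)$, the number of distinct such cylinders at scale $N$ is at most $2^{Nf(\alpha,\beta)+o(N)}$, so $\sum_{N\in[2^k,2^{k+1})} 2^{N(f-s+o(1))} \to 0$ for every $s>f(\alpha,\beta)$, forcing $\H^s(A)=0$ and $\dim_H A\leq f(\alpha,\beta)$. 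For general $S\subset\Delta$, the lower bound $\dim_H\{(\underline F,\overline F)\in S\}\geq \sup_S f$ is immediate from the pointwise case; for the converse, given $s>\sup_S f$, upper semicontinuity of $f$ on $\Delta$ (continuous on $\Delta\setminus\{(0,0)\}$, with $f(0,0) = 1 = \sup f$) allows covering $S$ by finitely many open balls $B_i$ with $\sup_{B_i} f<s$, and the above argument applies to each $B_i$ with $\beta$ allowed to vary in the projection of $B_i$.

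\textbf{Main obstacle.} The hardest step is making the rigidity lemma quantitative enough to yield the sharp constant $f(\alpha,\beta)$ with uniform $o(N)$ error across arbitrarily many cycles of a generic level-set point; in particular, one must rule out exotic cycle patterns that concentrate many ``medium-sized'' blocks near $\sqrt{\beta}N$ instead of a single big one. Boundary cases, especially the discontinuity of $f$ at $(0,0)$ and the degenerate cases $\alpha = 0$, $\alpha = \beta$, $\beta = 1$, also require separate case analysis.
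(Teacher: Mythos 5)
Your lower bound is, up to presentation, the paper's own construction (Proposition~\ref{PROP:lower-bound}): a single large block per cycle at geometrically growing scales, maximal-entropy (free) filling in between, and the mass distribution principle; your identity $(L-1-c)/\bigl((L-1)(1+c)\bigr)=f(\alpha,\beta)$ is the paper's $\ell/(\ell+m)$ after a phase shift of the cycle. That half is sound, modulo routine tail estimates for the geometric blocks.

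The upper bound contains a genuine gap, precisely at the step you yourself flag as the ``main obstacle'', and flagging it does not discharge it. Your rigidity inequality $\sum_i (n_i')^2\leqslant(\max_i n_i')\sum_i n_i'$ forces a single dominant block only under the hypothesis that the squares of the blocks inserted between an $\alpha$-visit at scale $a$ and the next $\beta$-visit at scale $b$ sum to $\approx(b-a)^2$; this holds only in the extremal case $b\approx(1+c)a$. A general point of the level set may pass from $\alpha$ to $\beta$ over a much longer stretch via several medium blocks (say two jumps $\alpha\to\gamma\to\beta$), in which case the hypothesis of your rigidity lemma fails and you have no bound at all on the density of free positions in that cycle. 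What is needed is an estimate valid for \emph{every} admissible block pattern. The paper obtains it through the renormalized large-block density $\varrho_k=\ell_k/\sqrt{F_k^\Lambda(x)}$: this quantity is constant across small blocks, increases across large blocks that decrease $F^\Lambda$, and across large blocks that increase $F^\Lambda$ from $\gamma$ to $\delta$ it is a convex combination of $\varrho_{k-1}$ and $\eta(\gamma,\delta)=(1-f(\gamma,\delta))/\sqrt{\delta}$ with weight bounded away from $1$; together with the monotonicity of $\eta$ (Lemma~\ref{LEM:H-monotonicity}) this yields $\liminf_k\varrho_k\geqslant\eta(\alpha,\beta)$ (Proposition~\ref{PROP:b_j-lower-bound}) and hence the uniform density bound $D_\Lambda(x)\geqslant 1-f(\alpha,\beta)$. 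This is the technical core of the theorem and is absent from your proposal. Two smaller points: for general, possibly non-compact $S$ your finite-subcover reduction is not available (and upper semicontinuity of $f$ does not give $\sup_{\overline S}f=\sup_S f$); the paper avoids this entirely by proving the density bound pointwise for each $x$ with $(\underline F(x),\overline F(x))\in S$. Once the density bound is in hand, your cylinder-counting step is a legitimate substitute for the paper's auxiliary Bernoulli measure in Lemma~\ref{LEM:D-c-upper-bound}.
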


Because of its central role, we detail some properties of the function $f$ below (without proof), which may be verified using standard tools from analysis. We describe the values of $f$ on the boundary of $\Delta$ in the first two items and proceed to monotonicity properties thereafter.

\begin{prop}
\label{PROP:f-properties}
The function $f \colon \Delta \to [0,1]$ has the following properties.
\begin{enumerate}
\item $f(\beta,\beta) = 0 = f(\alpha,1)$ for all $\beta \in (0,1]$ and $\alpha \in [0,1]$.
\item $f(0,\beta) = 1 - \sqrt{\beta}$ for all $\beta \in [0,1]$.
\item $f(\alpha,\beta) > 0$ for all $(\alpha,\beta)$ in the interior of $\Delta$.
\item The map $\alpha \mapsto f(\alpha,\beta)$ is decreasing in $\alpha$ for all $\beta$.
\item For every $\alpha \in (0,1)$, there is a value $\alpha^*$ with $\alpha< \alpha^* < 1$ such that $\beta \mapsto f(\alpha,\beta)$ is strictly increasing on $(\alpha,\alpha^*)$, takes its maximum in $\beta = \alpha^*$ and is strictly decreasing on $(\alpha^*,1)$.
\end{enumerate}
\end{prop}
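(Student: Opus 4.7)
The plan is to verify the five items in order, using direct substitution for the boundary values and elementary calculus for the monotonicity claims. Throughout it will be convenient to use the shorthand $s = \sqrt{\alpha\beta + \beta - \alpha}$ and $t = \sqrt{\alpha\beta}$, so that \eqref{EQ:f-alpha-beta} reads $f = (s - \beta)/(s + t)$.

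Items (1) and (2) follow from direct substitution: $\alpha = \beta$ gives $s = \beta$ and hence $f(\beta,\beta) = 0$; $\beta = 1$ gives $s = 1$ and hence $f(\alpha,1) = 0$; and $\alpha = 0$ reduces $f$ to $(\sqrt{\beta} - \beta)/\sqrt{\beta} = 1 - \sqrt{\beta}$. For item (3), the denominator $s + t$ is strictly positive on $\Delta \setminus \{(0,0)\}$, so positivity of $f$ in the interior of $\Delta$ reduces to $\alpha\beta + \beta - \alpha > \beta^2$, which rearranges as $(\beta - \alpha)(1 - \beta) > 0$, exactly the condition that $(\alpha,\beta)$ be interior to $\Delta$.

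For item (4), a quotient-rule calculation using $s_\alpha = (\beta-1)/(2s)$ and $t_\alpha = \beta/(2t)$ yields
\[
\frac{\partial f}{\partial \alpha} = \frac{1}{(s+t)^2}\left[\frac{(\beta-1)(t + \beta)}{2s} - \frac{\beta(s - \beta)}{2t}\right].
\]
Both bracketed terms are non-positive, the first because $\beta \leqslant 1$, the second because $s \geqslant \beta$ by item (3), and both are strictly negative on the interior of $\Delta$, yielding strict monotonicity there.

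For item (5), fix $\alpha \in (0,1)$. By items (1) and (3), the continuous function $\beta \mapsto f(\alpha,\beta)$ vanishes at $\beta = \alpha$ and at $\beta = 1$ and is strictly positive on $(\alpha,1)$, so it attains a maximum at some interior $\alpha^* \in (\alpha,1)$. The monotonicity assertion reduces to the uniqueness of this critical point. Computing $\partial f/\partial \beta$ in the same variables and using $s^2 = \beta(\alpha+1) - \alpha$ and $t^2 = \alpha\beta$ to simplify, the equation $\partial f/\partial \beta = 0$ becomes
\[
t\bigl(2\alpha - \beta(\alpha+1)\bigr) + \alpha^2 = \alpha\beta\, s,
\]
which after squaring twice yields a polynomial equation in $\beta$ of controlled degree. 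The main obstacle is to verify that this polynomial, together with the matching-sign conditions needed to undo the squarings, has a single solution in $(\alpha,1)$; once this is established, the strict monotonicity on the intervals $(\alpha,\alpha^*)$ and $(\alpha^*,1)$ follows from the signs of $\partial f/\partial\beta$ near the endpoints together with continuity.
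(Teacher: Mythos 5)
The paper states this proposition without proof (it is explicitly left to ``standard tools from analysis''), so there is no argument of the authors to compare against; judging your proposal on its own terms: items (1)--(3) are correct and complete, and item (4) is also fine --- your formula for $\partial f/\partial\alpha$ is right, both bracketed terms are non-positive because $\beta\leqslant 1$ and $s\geqslant\beta$ on $\Delta$ (the computation from item (3)), and continuity handles the boundary points where $t=0$. Your critical-point equation in item (5) is also correct: multiplying $s_\beta(t+\beta)-t_\beta(s-\beta)=s+t$ by $2st$ and substituting $s^2=(\alpha+1)\beta-\alpha$, $t^2=\alpha\beta$ indeed reduces $\partial f/\partial\beta=0$ to $t\bigl(2\alpha-\beta(\alpha+1)\bigr)+\alpha^2=\alpha\beta\ts s$.

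Item (5), however, is not actually proved. What you establish is only the existence of an interior maximizer $\alpha^*$, via a Rolle-type argument from $f(\alpha,\alpha)=f(\alpha,1)=0$ and positivity in between; that much would hold for any continuous function with those boundary values. The substance of item (5) is strict unimodality, which --- as you correctly note --- reduces to uniqueness of the critical point, and that is exactly the step you defer (``the main obstacle is to verify that this polynomial \ldots has a single solution in $(\alpha,1)$; once this is established \ldots''). Squaring the irrational equation twice produces a polynomial of degree roughly six in $\beta$ with $\alpha$ as a parameter, together with sign conditions needed to discard spurious roots introduced by the squarings; controlling its roots in $(\alpha,1)$ uniformly in $\alpha$ is the entire difficulty, and nothing in the proposal addresses it. To close the gap you must either carry out that root analysis, or reformulate so that monotonicity becomes transparent --- for instance, maximizing $f=\ell/(\ell+m)$ is equivalent to minimizing $m/\ell=\frac{\sqrt{\beta}}{\sqrt{\beta}-\sqrt{\alpha}}\cdot\frac{\beta+s}{1-\beta}$, and one can aim to show that this ratio of explicit factors has a derivative with a single sign change on $(\alpha,1)$.
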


Especially the last property in Proposition~\ref{PROP:f-properties} is remarkable as it shows that, for a fixed value $\underline{F} \in (0,1)$, most points (in the sense of Hausdorff dimension) achieve a value of $\overline{F}$ that lies strictly between $\underline{F}$ and $1$. 
We emphasize that, due to Proposition~\ref{PROP:accumulation-gaps}, the result in Theorem~\ref{THM:main} can also be regarded as a statement about the level sets for the $\liminf$ and $\limsup$ of the sequences $\log \mu(C_n(x)/n^2$ and $S_n \psi(x)/n^2$, respectively. In particular, the non-triviality of the joint spectrum of the $\limsup$ and the $\liminf$ persists.
Let us single out two more consequences for the reader's convenience.

\begin{coro}
Given $\beta \in [0,1]$, we have
\[
\dim_H
\biggl \{ x \in \mathbb{X} : \limsup_{n \to \infty} \frac{\log \mu(C_n(x))}{-n^2 \log 2} = \beta \biggr\} = 1 - \sqrt{\beta}.
\]
\end{coro}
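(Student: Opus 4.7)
The plan is to reduce the corollary directly to Theorem~\ref{THM:main} by identifying the level set of the $\limsup$ with a horizontal slice of the joint spectrum domain, and then using Proposition~\ref{PROP:f-properties} to evaluate the supremum of $f$ over that slice.

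First, I would use Proposition~\ref{PROP:accumulation-gaps} to translate the condition on $\limsup_{n \to \infty} \log \mu(C_n(x))/(-n^2 \log 2)$ into a condition on $\overline{F}$. For every $x \in \mathbb{X} \setminus \mc D$ the proposition gives
\[
\limsup_{n \to \infty} \frac{\log \mu(C_n(x))}{-n^2 \log 2} = \overline{F}(x).
\]
The dyadic exceptional set $\mc D$ is countable, so it contributes Hausdorff dimension zero and can be ignored (it intersects the level set only for $\beta = 1$, where the answer is $0$ anyway). Hence, up to a dimensionally negligible set, the level set for $\beta$ coincides with
\[
\{(\underline{F},\overline{F}) \in S_{\beta}\}, \qquad S_{\beta} := \{(\alpha,\beta) : 0 \leqslant \alpha \leqslant \beta\} \subset \Delta.
\]

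Next, I would invoke Theorem~\ref{THM:main} to obtain
\[
\dim_H\{(\underline{F},\overline{F}) \in S_{\beta}\} = \sup_{\alpha \in [0,\beta]} f(\alpha,\beta).
\]
By item~(4) of Proposition~\ref{PROP:f-properties}, the function $\alpha \mapsto f(\alpha,\beta)$ is decreasing in $\alpha$, so the supremum over $\alpha \in [0,\beta]$ is attained at $\alpha = 0$. By item~(2), $f(0,\beta) = 1 - \sqrt{\beta}$. This yields the claimed value. For the edge cases, $\beta = 0$ forces $\alpha = 0$ and one uses the convention $f(0,0) = 1 = 1 - \sqrt{0}$; for $\beta = 1$, item~(1) gives $f(\alpha,1) = 0$ uniformly, matching $1 - \sqrt{1} = 0$.

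There is essentially no obstacle here beyond assembling the already-stated results; the only point requiring a brief comment is the treatment of the dyadic points (which appear in the level set only for $\beta = 1$ and contribute nothing to Hausdorff dimension), and the confirmation that the supremum of $f$ on the horizontal slice $\{\alpha \leqslant \beta\}$ is attained on the left boundary $\alpha = 0$, which is immediate from the monotonicity property~(4).
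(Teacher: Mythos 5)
Your proposal is correct and follows exactly the route the paper intends: translate the $\limsup$ condition via Proposition~\ref{PROP:accumulation-gaps} into $\overline{F}(x)=\beta$, apply Theorem~\ref{THM:main} with $S_{\beta}=\{(\alpha,\beta):\alpha\leqslant\beta\}$, and evaluate the supremum using properties (2) and (4) of Proposition~\ref{PROP:f-properties}, with the countable dyadic set handled separately. No gaps.
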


\begin{coro}
The set of points $x \in \mathbb{X}$ with $\liminf_{n \to \infty} S_n \psi(x)/n^2 = - r$ has positive Hausdorff dimension if $r \in [0,\infty)$ and vanishing Hausdorff dimension if $r= \infty$.
\end{coro}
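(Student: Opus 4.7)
The plan is to reduce this corollary to Theorem~\ref{THM:main} via Proposition~\ref{PROP:accumulation-gaps}. Since $\mc D$ is countable and $S_n \psi \equiv -\infty$ on $\mc D$ for large $n$, the dyadic part contributes only to the case $r = \infty$ and has Hausdorff dimension $0$; I therefore restrict attention to $\mathbb{X} \setminus \mc D$ in what follows.

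On $\mathbb{X} \setminus \mc D$, combining Proposition~\ref{PROP:accumulation-gaps} with the identity $\liminf_n f_n = -\limsup_n (-f_n)$ gives
\[
\liminf_{n \to \infty} \frac{S_n \psi(x)}{n^2} = -\log 2 \cdot \frac{\overline{F}(x)}{1 - \overline{F}(x)},
\]
with the right-hand side interpreted as $-\infty$ when $\overline{F}(x) = 1$. Setting this equal to $-r$ establishes a bijection $r \leftrightarrow \beta_r := r/(r + \log 2)$ between $[0, \infty]$ and $[0,1]$ (with $\beta_\infty := 1$), so the level set $\{x \in \mathbb{X} \setminus \mc D : \liminf_n S_n\psi(x)/n^2 = -r\}$ coincides with $\{x \in \mathbb{X} \setminus \mc D : \overline{F}(x) = \beta_r\}$.

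Applying Theorem~\ref{THM:main} to $S = \{(\alpha, \beta_r) : 0 \leqslant \alpha \leqslant \beta_r\}$, this level set has Hausdorff dimension $\sup_{\alpha \in [0,\beta_r]} f(\alpha, \beta_r)$. By property~(4) in Proposition~\ref{PROP:f-properties}, $f(\cdot, \beta_r)$ is decreasing, so the supremum is attained at $\alpha = 0$, yielding $f(0, \beta_r) = 1 - \sqrt{\beta_r}$ by property~(2) together with the convention $f(0,0) = 1$. This is strictly positive for $r \in [0, \infty)$ (then $\beta_r < 1$) and equals $0$ for $r = \infty$ (then $\beta_\infty = 1$). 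Combined with $\dim_H \mc D = 0$, both conclusions follow.

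The only real work is the bookkeeping in the translation step: swapping liminf and limsup via the sign change in the formula of Proposition~\ref{PROP:accumulation-gaps} and identifying the correct value of $\beta_r$. Once this is done, the result is immediate from Theorem~\ref{THM:main} and the monotonicity of $f$ in its first argument.
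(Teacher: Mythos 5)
Your proof is correct and follows exactly the route the paper intends: Proposition~\ref{PROP:accumulation-gaps} converts the $\liminf$ of $S_n\psi(x)/n^2$ into the level set $\{\overline{F}=\beta_r\}$ with $\beta_r=r/(r+\log 2)$, and Theorem~\ref{THM:main} together with the monotonicity of $f$ in $\alpha$ gives dimension $1-\sqrt{\beta_r}$, which is positive precisely for $r<\infty$. The handling of the dyadic points and of the degenerate cases $\beta_r\in\{0,1\}$ is also as the paper presents it.
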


\section{Estimates for Birkhoff sums and measure decay}
\label{SEC:Birkhoff-and-measure-estimates}

We begin with a few preliminaries on notation and basic concepts. Given two (real-valued) sequences $(f_m)_{m \in \N}$ and $(g_m)_{m \in \N}$, we write $f_m \sim g_m$ if $f_m/g_m \to 1$ as $m \to \infty$. Similarly, $f_m = o(g_m)$ if $f_m/g_m \to 0$ and $f_m = O(g_m)$ if $f_m/g_m$ is bounded as $m \to \infty$.

Every Borel probability measure $\nu$ on $\mathbb{X}$ may also be regarded as a linear functional on the space of continuous functions $C(\mathbb{X})$. This motivates the notation $\nu(f):= \int f \dd \nu$ for $f \in C(\mathbb{X})$, which we sometimes extend to $\nu$-integrable functions $f$. 

Following \cite{Keane,Ledrappier}, a \emph{$g$-function} over $(\mathbb{X},\sigma)$ is a Borel measurable function $g \colon \mathbb{X} \to [0,1]$ satisfying $\sum_{y \in \sigma^{-1}x} g(y) = 1$ for all $x \in \mathbb{X}$. There is a corresponding transfer operator
\[
{\mc L}_{g} \colon C(\mathbb{X}) \to C(\mathbb{X}), 
\quad ({\mc L}_{g} f)(x) = \sum_{y \in \sigma^{-1}x} g(y) f(y).
\]
We call $\nu$ a \emph{$g$-measure} with respect to $g$ if it is invariant under the dual of ${\mc L}_g$, that is, $\nu({\mc L}_g f) = \nu(f)$ for all $f \in C(\mathbb{X})$. 
It is straightforward to check that $g = \widetilde{g} \circ \pi_2$, with $\widetilde{g}(x) = (1 - \cos(2\pi x))/2$ is indeed a $g$-function with $g$-measure $\mu$; compare \cite{BCEG} for the corresponding statement about $\widetilde{g}$ and $\mu_{\operatorname{TM}}$ over the doubling map. In fact $\mu_{\operatorname{TM}}$ is known to be the \emph{unique} $g$-measure with respect to $\widetilde{g}$. We refer to \cite{BCJOE,BK,CR,Keane} and the references therein for more on the (non-)uniqueness of $g$-measures.

Since $g = \exp \circ \psi$, the invariance of $\mu$ under ${\mc L}_{g}$ builds a natural bridge to the potential function.
This can be used to obtain the following replacement for the Gibbs property in the H\"{o}lder continuous case.

\begin{lemma}
\label{LEM:lower-bound}
For any two words $w \in \{0,1 \}^n$ and $v \in \{0,1\}^m$, we have
\[
\mu([wv]) = \int_{[v]} g_n (wx) \dd \mu(x),
\]
where 
\[
g_n(x) = \prod_{k = 0}^{n-1} g(\sigma^k x).
\]
In particular,
\[
 \inf_{x \in [wv]} S_n \psi(x) + \log (\mu[v])
 \leqslant \log(\mu[wv]) 
 \leqslant \log(\mu[w])
 \leqslant \sup_{x \in [w]} S_n \psi(x). 
\]
\end{lemma}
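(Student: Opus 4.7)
The plan is to derive the integral identity from the $g$-measure invariance and then obtain the three inequalities as direct consequences.

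For the identity, the defining property $\mu(\mc{L}_g f) = \mu(f)$ for all $f \in C(\mathbb{X})$ iterates immediately to $\mu(\mc{L}_g^n f) = \mu(f)$ for every $n \in \N$. Since cylinder sets are clopen in $\mathbb{X}$, their indicators are continuous, and I may apply this with $f = \mathbbm{1}_{[wv]}$. Using the identification $\sigma^{-n}(x) = \{ ux : u \in \{0,1\}^n \}$, the iterated transfer operator evaluates to
\[
(\mc{L}_g^n \mathbbm{1}_{[wv]})(x) = \sum_{u \in \{0,1\}^n} g_n(ux) \ts \mathbbm{1}_{[wv]}(ux) = g_n(wx)\ts \mathbbm{1}_{[v]}(x),
\]
since $ux \in [wv]$ forces $u = w$ and $x \in [v]$. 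Integrating against $\mu$ and invoking invariance yields $\mu([wv]) = \int_{[v]} g_n(wx) \dd \mu(x)$.

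For the chain of inequalities, note that $\log g = \psi$ by construction, so $g_n(y) = \exp(S_n \psi(y))$. For each $x \in [v]$ the concatenation $wx$ lies in $[wv]$, whence $e^{S_n \psi(wx)} \geqslant \exp(\inf_{y \in [wv]} S_n \psi(y))$; integrating this bound over $[v]$ and taking logarithms produces the leftmost inequality (if the infimum is $-\infty$, the bound becomes the trivial statement $-\infty \leqslant \log \mu([wv])$, which is unproblematic). The middle inequality is the monotonicity of $\mu$ applied to $[wv] \subset [w]$. For the rightmost bound, I apply the identity once more with $m=0$ and $v$ empty, so $[v] = \mathbb{X}$; then $\mu([w]) = \int g_n(wx) \dd\mu(x) \leqslant \sup_{y \in \mathbb{X}} e^{S_n \psi(wy)} = \sup_{z \in [w]} e^{S_n \psi(z)}$, and taking logs concludes.

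There is no serious obstacle here; the argument amounts to unwinding the definitions of the transfer operator and of $\psi$. The only bookkeeping point is that $\psi$ takes the value $-\infty$ on the preimages $0^\infty,1^\infty$ of the singularity, but since the inequalities live in the extended real line and since $g_n \geqslant 0$ makes the integral well-defined, this causes no issue.
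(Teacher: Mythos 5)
Your argument is correct and follows the same route as the paper: iterate the invariance of $\mu$ under the transfer operator, evaluate $\mathcal{L}_g^n \mathds{1}_{[wv]}$ to obtain the integral identity, and then bound the integrand by its infimum or supremum. The extra care you take with the $-\infty$ case and with the empty-$v$ instance for the rightmost inequality is fine but not a departure from the paper's proof.
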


\begin{proof}
Writing $\mathds{1}_{[wv]}$ for the characteristic function of $[wv]$ and using the invariance of $\mu$ under the transfer operator, we get 
\[
\mu([wv]) = \mu(\mathds{1}_{[wv]}) = \mu({\mc L}_{g}^n \mathds{1}_{[wv]}),
\]
and obtain via a straightforward calculation
\[
{\mc L}_{g}^n \mathds{1}_{[wv]}\colon x \mapsto 
\sum_{w' \in \{0,1 \}^n } g_n(w' x) \mathds{1}_{[wv]}(w' x)
= g_n(wx) \mathds{1}_{[v]}(x),
\]
This yields the first assertion. The inequalities follow by estimating the integrand via its infimum (or supremum) and taking the logarithm.
\end{proof}

We continue by recording a basic estimate for the potential function. The proof is straightforward and left to the interested reader.

\begin{lemma}
\label{LEM:psi-estimate}
For every $x \in \TT$, let $|x|$ be the smallest Euclidean distance to an endpoint of the unit interval. Then, we have
\[
2 \log(2 |x|) \leqslant \log \widetilde{g}(x) \leqslant 2 \log(\pi |x|).
\]
\end{lemma}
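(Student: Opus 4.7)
The plan is to rewrite $\widetilde{g}$ in a form in which the claimed inequalities become standard bounds on $\sin$. Using the half-angle identity $1 - \cos(2\pi x) = 2\sin^2(\pi x)$, I would first observe
\[
\widetilde{g}(x) = \tfrac{1}{2}\bigl(1 - \cos(2\pi x)\bigr) = \sin^2(\pi x),
\]
so that $\log \widetilde{g}(x) = 2 \log|\sin(\pi x)|$. Since $|x|$ denotes the distance to the nearest endpoint of $[0,1]$, and $\sin(\pi \cdot)$ is symmetric about $1/2$ and non-negative on $[0,1]$, one has $|\sin(\pi x)| = \sin(\pi |x|)$ with $\pi |x| \in [0, \pi/2]$.

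After this reduction, the desired double inequality is equivalent (after dividing by $2$ and exponentiating) to
\[
2 |x| \leqslant \sin(\pi |x|) \leqslant \pi |x|,
\]
i.e., to the statement $\tfrac{2}{\pi} y \leqslant \sin(y) \leqslant y$ for $y = \pi|x| \in [0,\pi/2]$. The upper bound $\sin(y) \leqslant y$ follows immediately from $\sin(y) = \int_0^y \cos(t)\,\dd t \leqslant y$. For the lower bound (Jordan's inequality) I would use the concavity of $\sin$ on $[0,\pi/2]$: the graph of $\sin$ lies above the chord joining $(0,0)$ and $(\pi/2,1)$, which is exactly the line $y \mapsto 2y/\pi$.

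Taking logarithms of the two pointwise inequalities (all quantities are positive for $x$ away from the endpoints, and the bounds trivialize to $-\infty \leqslant -\infty$ at the endpoints themselves) and multiplying by $2$ yields the claim. There is no real obstacle here; the only thing to take care of is the identification $|\sin(\pi x)| = \sin(\pi|x|)$ on the torus, which relies on the correct interpretation of $|x|$ as the distance to $\{0,1\}$ modulo $1$.
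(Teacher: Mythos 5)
Your proof is correct and is exactly the standard argument the paper has in mind when it leaves this "to the interested reader": rewrite $\widetilde{g}(x)=\sin^2(\pi x)$ via the half-angle identity and apply Jordan's inequality $\tfrac{2}{\pi}y\leqslant \sin y\leqslant y$ on $[0,\pi/2]$. The reduction $|\sin(\pi x)|=\sin(\pi|x|)$ and the treatment of the endpoints are handled correctly, so there is nothing to add.
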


We use these bounds to obtain an estimate for $S_n \psi(x)$ for arbitrary $n \in \N$ and $x \in \mathbb{X} \setminus \mc D$. 
Recall the notation $\tau(x) = (n_m)_{m \in \N}$ and $N_m = \sum_{i=1}^{m} n_i$ for $m \in \N$.

\begin{lemma}
\label{LEM:psi-n-bounds}
Let $N_m \leqslant n < N_{m+1}$ for some $m \in \N$ and $r_{m+1} = N_{m+1} - n > 0$. Then, 
\begin{align*}
 - \log 2 \biggl( n + \sum_{i=1}^{m+1} n_i^2 - r_{m+1}^2\biggr) \leqslant
S_n \psi(x)  \leqslant - \log 2 \biggl ( n + \sum_{i=1}^{m+1} n_i^2 - r_{m+1}^2 \biggr) + n \log \pi .
\end{align*}
\end{lemma}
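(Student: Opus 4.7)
The plan is to expand $S_n \psi(x) = \sum_{k=0}^{n-1} \log \widetilde{g}(\pi_2(\sigma^k x))$ and bound each summand via the length of the initial run of identical letters at the start of $\sigma^k x$. Writing $x = a^{n_1} b^{n_2} a^{n_3} \cdots$ via the alternation coding, each index $k \in \{0, \dots, n-1\}$ sits at some position $j \in \{1, \dots, n_i\}$ within a block $i \in \{1, \dots, m+1\}$, and then $\sigma^k x$ starts with the currently active letter repeated $\ell_k := n_i - j + 1$ times, followed by the opposite letter. Since $x \notin \mc D$, the binary expansion of $\sigma^k x$ then yields
\[
2^{-\ell_k - 1} \;\leqslant\; |\pi_2(\sigma^k x)| \;\leqslant\; 2^{-\ell_k}
\]
(the cases $a = 0$ and $a = 1$ being symmetric). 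Plugging these into Lemma~\ref{LEM:psi-estimate} gives the per-term two-sided estimate
\[
-2\ell_k \log 2 \;\leqslant\; \log \widetilde{g}\bigl(\pi_2(\sigma^k x)\bigr) \;\leqslant\; -2\ell_k \log 2 + O(1),
\]
where the additive error is controlled by $\log \pi$.

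Summing these bounds reduces the lemma to the arithmetic identity
\[
2 \sum_{k=0}^{n-1} \ell_k \;=\; n + \sum_{i=1}^{m+1} n_i^2 - r_{m+1}^2.
\]
I would verify this block by block: each complete block $i \in \{1, \dots, m\}$ contributes $\sum_{\ell=1}^{n_i} \ell = n_i(n_i+1)/2$, while the partial $(m+1)$-st block, of which only $n_{m+1} - r_{m+1}$ positions are used, contributes $\sum_{\ell = r_{m+1}+1}^{n_{m+1}} \ell = [n_{m+1}(n_{m+1}+1) - r_{m+1}(r_{m+1}+1)]/2$. Adding, the linear-in-$n_i$ pieces collapse via $\sum_{i=1}^{m+1} n_i - r_{m+1} = N_{m+1} - r_{m+1} = n$, and the quadratic pieces yield the claimed identity. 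Multiplying by $-2 \log 2$ then produces the common principal term $-\log 2\,(n + \sum_{i=1}^{m+1} n_i^2 - r_{m+1}^2)$ in both bounds, while the per-term $O(1)$ error accumulates to the $n \log \pi$ correction in the upper bound.

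The main obstacle is purely the combinatorial bookkeeping at the cut-off $k = n-1$: one must carefully match orbit positions with positions inside the $(m+1)$-st block so that the unused tail of length $r_{m+1}$ generates exactly the $-r_{m+1}^2$ term, and verify the boundary cases $r_{m+1} = 0$ and the symmetry between $0$- and $1$-runs in the bound on $|\pi_2(\sigma^k x)|$. Once this bookkeeping is in place, everything else reduces to Lemma~\ref{LEM:psi-estimate} and triangular-number arithmetic.
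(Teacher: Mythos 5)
Your proposal is correct and takes essentially the same route as the paper: the per-term bound $-2\ell_k \log 2 \leqslant \psi(\sigma^k x) \leqslant -2\ell_k \log 2 + O(1)$ coming from $\sigma^k x \in [a^{\ell_k} b]$ together with Lemma~\ref{LEM:psi-estimate}, followed by triangular-number summation over the blocks (the paper merely organizes this block by block via its estimate for $S_{k'}\psi$ on a single run, rather than through your single closed-form identity, and it glosses over the same harmless constant in the $\log\pi$ error term that you do).
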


\begin{proof}
First, note that if $y \in [0^k 1]$ for some $k \in \N$, then
$
2^{-(k+1)} \leqslant |\pi_2(y)| \leqslant 2^{-k},
$
which by Lemma~\ref{LEM:psi-estimate} implies that
\[
- 2k \log 2
\leqslant  \psi(y) 
\leqslant - 2k \log 2 + \log \pi.
\]
Let $k' = k-r$ for some $0\leqslant r < k$.
Since for $0 \leqslant \ell < k$ the point $\sigma^\ell y$ is contained in $[0^{k-\ell}1]$, we can estimate
\begin{equation}
\label{EQ:psi-kprime-lower}
S_{k'} \psi(y) = \sum_{\ell=0}^{k-r-1} \psi(\sigma^\ell y) 
\geqslant - 2 \log 2 \sum_{\ell=0}^{k-r-1} (k-\ell) 
= -(k^2 - r^2 + k') \log 2. 
\end{equation}
In the special case $k'= k$, this yields
\begin{equation}
\label{EQ:psi-k-bounds}
S_k \psi(y) \geqslant -(k^2 + k) \log 2. 
\end{equation}
By symmetry, the same bounds hold if $y \in [1^k 0]$. For simplicity let us assume that 
\[
x = 0^{n_1} 1^{n_2} \cdots 1^{n_m} 0^{n_{m+1}} \cdots.
\]
All other cases work analogously. Since $n+r_{m+1} = N_{m+1} = N_m + n_{m+1}$, we have in particular that $n-N_m = n_{m+1} - r_{m+1}$. Using this, we can split up the Birkhoff sum as
\begin{align*}
S_n \psi(x) &=S_{n_1} \psi(0^{n_1} 1\cdots) + \ldots + S_{n_m}\psi(1^{n_m} 0\cdots)
+ S_{n_{m+1} - r_{m+1}} \psi(0^{n_{m+1}} 1 \cdots) 
\\ &\geqslant - \log 2 \biggl( n + \sum_{i=1}^m n_i^2 + (n_{m+1}^2 - r_{m+1}^2) \biggr),
\end{align*}
using \eqref{EQ:psi-kprime-lower} and \eqref{EQ:psi-k-bounds} in the last step. This shows the lower bound. The upper bound follows along the same lines.
\end{proof}

Although $\mu(C_n(x))$ is closely related to $S_n \psi(x)$ via Lemma~\ref{LEM:lower-bound}, we emphasize that, in contrast to $S_n \psi(x)$, the expression $\mu(C_n(x))$ depends only on the first $n$ positions of $x$.
To account for this fact, we extend the action of the alternation coding $\tau$ to finite words via
\[
\tau\colon a^{n_1} b^{n_2} \cdots a^{n_m} \mapsto n_1 \cdots n_m,
\]
for $a\neq b$, (and $m$ odd) and accordingly if the word ends in $b^{n_m}$ (if $m$ is even).

\begin{lemma}
\label{LEM:square-sum-bound}
Let $w \in \{0,1\}^n$ with $\tau(w) = (n_1,\ldots,n_m)$. Then,
\[
- \biggl( n + 1 + \sum_{i=1}^m n_i^2 \biggr) \log 2
\leqslant \log \mu([w])
 \leqslant  - \biggl(n + \sum_{i = 1}^m n_i^2 \biggr) \log 2 + n \log \pi.
\]
\end{lemma}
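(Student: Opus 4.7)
My plan is to combine Lemma~\ref{LEM:lower-bound} with Lemma~\ref{LEM:psi-n-bounds}, treating the two bounds separately. For the upper bound I start from $\log \mu([w]) \leqslant \sup_{x \in [w]} S_n \psi(x)$, given by Lemma~\ref{LEM:lower-bound}, and bound the Birkhoff sum from above for an arbitrary extension $x = wy$ via Lemma~\ref{LEM:psi-n-bounds}. The key bookkeeping is to compare the quantity $\sum_{i=1}^{M+1} (n_i')^2 - r_{M+1}^2$ produced by the lemma (with $(n_i')$ the alternation coding of $x$ and appropriate index $M$) against the target $\sum_{i=1}^m n_i^2$. If $y$ starts with the symbol opposite to the last letter of $w$, the codings agree on the first $m$ blocks, $M = m$, $r_{M+1}$ equals the full $(m{+}1)$-th block length, and the two sums coincide. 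If instead $y$ starts with the same symbol, the $m$-th block is extended by some $k \geqslant 1$, giving $n_m' = n_m + k$ and $r_m = k$, hence an extra non-negative contribution $(n_m+k)^2 - k^2 - n_m^2 = 2 n_m k$. Either way the lemma's sum dominates $\sum_{i=1}^m n_i^2$, delivering the upper bound.

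For the lower bound I let $b \in \{0,1\}$ be the symbol opposite to the last letter of $w$ and apply Lemma~\ref{LEM:lower-bound} with $v = b$ to obtain
\[
\log \mu([w]) \geqslant \log \mu([wb]) \geqslant \inf_{x \in [wb]} S_n \psi(x) + \log \mu([b]).
\]
Every $x \in [wb]$ sits in the "opposite symbol" situation analysed above, so the lower bound of Lemma~\ref{LEM:psi-n-bounds} yields $\inf_{x \in [wb]} S_n \psi(x) \geqslant -(n + \sum_{i=1}^m n_i^2) \log 2$. It then remains to identify $\mu([b]) = 1/2$: since $\widetilde{g}$ is invariant under the reflection $x \mapsto 1-x$, the unique $g$-measure $\mu_{\operatorname{TM}}$ is symmetric around $1/2$, which transports via $\pi_2$ to $\mu([0]) = \mu([1]) = 1/2$. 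Thus $\log \mu([b]) = -\log 2$, and the two estimates combine to yield the claimed lower bound.

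The only step requiring genuine care is the upper-bound case analysis: one has to verify that extending a block can never reduce the sum of squares entering Lemma~\ref{LEM:psi-n-bounds}, so that the bound computed under the assumption that the blocks of $x$ terminate exactly with those of $w$ remains valid for all extensions. Once this monotonicity is in place, both inequalities follow directly from the two preceding lemmas, and neither side involves any further estimate beyond what has already been set up.
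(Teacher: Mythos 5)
Your proposal is correct and follows essentially the same route as the paper: both bounds come from Lemma~\ref{LEM:lower-bound} combined with Lemma~\ref{LEM:psi-n-bounds}, with the lower bound obtained by passing to $[wb]$ (the paper's $[w0]$ after its normalization of $w$) and using $\mu([b])=1/2$ by symmetry, and the upper bound obtained by the same two-case analysis (your ``same symbol'' case, with the extra term $2n_mk\geqslant 0$, is exactly the paper's observation that $n_m^2\leqslant n_m(x)^2-r_m^2$ when $N_{m-1}<n<N_m$). No gaps.
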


\begin{proof}
Again, it suffices to consider the case that $w$ is of the form \[
w = 0^{n_1} 1^{n_2} \cdots 0^{n_{m-1}} 1^{n_m}.
\]
From Lemma~\ref{LEM:lower-bound} (and using $\mu[0] = 1/2$ by symmetry considerations), we obtain
\begin{equation}
\label{EQ:measure-bound-inf}
\inf_{x \in [w0]} S_n \psi(x) - \log 2
\leqslant \log \mu[w0]
\leqslant \log \mu[w]
\leqslant \sup_{x \in [w]} S_n \psi(x).
\end{equation}
For the lower bound, we can apply Lemma~\ref{LEM:psi-n-bounds} to $x \in [w0]$ with $n=N_m$ and $r_{m+1} = n_{m+1}$ which immediately gives the desired estimate.
For the upper bound, assume that $x \in [w]$ and note that in this case, $\tau(x)$ is of the form
\[
\tau(x) = n_1 \cdots n_{m-1} n_m(x) \cdots,
\]
with $n_m(x) \geqslant n_m$ and $N_{m-1} < n \leqslant N_m$. If $n = N_m$, we have $n_m = n_m(x)$ and may argue as for the lower bound. We hence assume $N_{m-1} < n < N_m$ in the following. Then, $r_m = N_{m} - n$ is equal to $n_m(x) - n_m$ by construction. From this, we easily conclude that $n_m^2 \leqslant n_m(x)^2 - r_m^2$. Combining this estimate with the upper bound provided by Lemma~\ref{LEM:psi-n-bounds} yields
\[
S_n \psi(x) \leqslant - \biggl(n + \sum_{j = 1}^m n_j^2 \biggr) \log 2 + n \log \pi.
\]
Since $x \in [w]$ was arbitrary, this concludes the proof via \eqref{EQ:measure-bound-inf}.
\end{proof}

We summarize our findings in terms of the function sequence $(f_m)_{m \in \N}$, with
\[
f_m(x) = \sum_{i=1}^m n_i^2,
\] 
for all $x \in \mathbb{X} \setminus \mc D$ and $m \in \N$. 
For an illustration of the following proposition we refer to Figure~\ref{FIG:square-interpolation}.

\begin{prop}
\label{PROP:psi-n-log-mu-bounds}
Let $N_m \leqslant n < N_{m+1}$ for some $m \in \N$, with $r_{m+1} = N_{m+1} - n$ and $s_{m+1} = n - N_m$. Then,
\begin{align*}
S_n \psi(x) &= - (f_{m+1}(x) - r^2_{m+1})\log 2  + O(n),
\\ \log \mu(C_n(x)) &= - (f_{m}(x) + s^2_{m+1})\log 2 + O(n).
\end{align*}
\end{prop}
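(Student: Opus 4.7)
The plan is to deduce the two asymptotic identities directly from Lemma~\ref{LEM:psi-n-bounds} and Lemma~\ref{LEM:square-sum-bound}, essentially by absorbing all $O(n)$ contributions into the error term.

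For the first identity, I would simply rewrite the double-sided estimate in Lemma~\ref{LEM:psi-n-bounds} as
\[
-\log 2 \bigl( f_{m+1}(x) - r_{m+1}^2 \bigr) - n \log 2
\leqslant S_n \psi(x)
\leqslant -\log 2 \bigl( f_{m+1}(x) - r_{m+1}^2 \bigr) + n (\log \pi - \log 2),
\]
so that both bounds agree with $-(f_{m+1}(x) - r_{m+1}^2)\log 2$ up to an error of size at most $n \max\{\log 2, |\log \pi - \log 2|\} = O(n)$.

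For the second identity, the key observation is how the alternation coding truncates. Writing $w = x_1 \cdots x_n$, if $n = N_m$ then $\tau(w) = (n_1, \ldots, n_m)$ and $s_{m+1} = 0$, so $\sum_{i} (\tau(w))_i^2 = f_m(x) = f_m(x) + s_{m+1}^2$. If instead $N_m < n < N_{m+1}$, then by construction the letters $x_{N_m+1}, \ldots, x_n$ form a constant block of length $s_{m+1}$ of the opposite symbol, so $\tau(w) = (n_1, \ldots, n_m, s_{m+1})$ and again the sum of squares equals $f_m(x) + s_{m+1}^2$. Feeding this into Lemma~\ref{LEM:square-sum-bound} gives
\[
-\bigl(f_m(x) + s_{m+1}^2\bigr)\log 2 - (n+1)\log 2
\leqslant \log \mu(C_n(x))
\leqslant -\bigl(f_m(x) + s_{m+1}^2 \bigr)\log 2 + n(\log \pi - \log 2),
\]
and both bounds deviate from $-(f_m(x) + s_{m+1}^2)\log 2$ only by $O(n)$, as required.

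There is no real obstacle here: the proof is a two-case bookkeeping argument for the truncated alternation coding followed by a direct application of the previous lemmas. The only point worth flagging is the boundary case $n = N_m$, which must be checked separately so that the formula $f_m(x) + s_{m+1}^2$ remains valid when $s_{m+1} = 0$ (i.e.\ when the $(m+1)$-st block has not yet begun).
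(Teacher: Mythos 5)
Your proof is correct and follows exactly the intended route: the paper states this proposition as a direct summary of Lemma~\ref{LEM:psi-n-bounds} and Lemma~\ref{LEM:square-sum-bound}, with all linear-in-$n$ terms absorbed into the $O(n)$ error. Your bookkeeping for the truncated alternation coding $\tau(x_1\cdots x_n)=(n_1,\ldots,n_m,s_{m+1})$, including the boundary case $n=N_m$, is precisely the observation already used in the paper's proof of Lemma~\ref{LEM:square-sum-bound}.
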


\begin{figure}
\begin{tikzpicture}
\draw[->] (-1.5,-1)--(-1.5,4);
\draw[->] (-1.5, -1) -- (5,-1) node[right, outer sep=0.2cm] {$n$};
  \draw[domain=0:3, samples = 300, variable=\x] plot ({\x}, {\x^2/3 } );
  \draw[domain=0:3, samples = 300, variable=\x, dashed] plot ({\x}, {3 - (\x - 3)^2/3 } );

\draw[dotted] (0,0) -- (-1.5,0)  node[left] {$f_m(x)$};
\draw[dotted] (3,3) -- (-1.5,3) node[left] {$f_{m+1}(x)$};

\draw[dotted] (0,0) -- (0,-1) node[below] {$N_m$};
\draw[dotted] (3,3) -- (3,-1) node[below] {$N_{m+1}$};
\end{tikzpicture}
\caption{Estimates (up to $O(n)$) for $-\log \mu(C_n(x))/\log 2$ (solid) and for $-S_n \psi(x)/\log 2$ (dashed), given in Proposition~\ref{PROP:psi-n-log-mu-bounds}.}
\label{FIG:square-interpolation}
\end{figure}
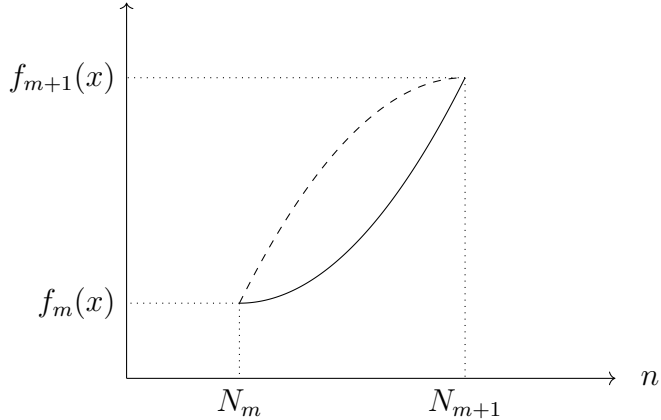

\begin{remark}
It is worth noticing that both $N_m(x)$ and $f_m(x)$ are themselves Birkhoff sums over $(\N^\N,\sigma)$. More precisely,  $N_m(x) = S_m \varphi(\tau(x))$, with $\varphi \colon n_1 n_2 \ldots \mapsto n_1$ and  $f_m(x) = S_m \varphi^2(\tau(x))$, where $\varphi^2 \colon n_1 n_2 \ldots \mapsto n_1^2$. Hence, we are in fact concerned with locally constant, unbounded observables over the full shift with a countable alphabet.
\end{remark}


\section{Intermediate scaling}
\label{SEC:intermediate}

In this Section we investigate the scaling function $n \mapsto n^{\gamma}$ for $\gamma \in (1,2)$ and prove that this scaling is typical for $S_n \psi(x)$ and $\log \mu(C_n(x))$ in the sense of full Hausdorff dimension. As a first step, we show that we may restrict our attention to the limiting behavior of $f_m$ as $m \to \infty$.

\begin{lemma}
\label{LEM:gamma-limits}
Assume that $x \in \mathbb{X} \setminus \mc D$ and $\lim_{m\to\infty}N_m^{-\gamma} f_m(x) = \alpha > 0$. Then,
\[
\lim_{n \to \infty} \frac{\log \mu(C_n(x))}{n^{\gamma}} = \lim_{n \to \infty} \frac{S_n \psi(x)}{n^{\gamma}} = - \alpha \log 2.
\]
\end{lemma}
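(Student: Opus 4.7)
The plan is to reduce the claim to an asymptotic on the coding $\tau(x) = (n_i)_{i \in \N}$ and then read off both limits from Proposition~\ref{PROP:psi-n-log-mu-bounds}. For $N_m \leqslant n < N_{m+1}$, that proposition gives
\[
\frac{-\log \mu(C_n(x))}{n^\gamma} = \frac{(f_m(x) + s_{m+1}^2)\log 2}{n^\gamma} + O(n^{1-\gamma}),
\]
and the analogous identity with $f_{m+1}(x) - r_{m+1}^2$ in place of $f_m(x) + s_{m+1}^2$ for $-S_n \psi(x)/n^\gamma$. Since $\gamma > 1$, the $O(n^{1-\gamma})$ error tends to zero, so the task reduces to showing
\[
\frac{f_m(x)}{n^\gamma} \to \alpha, \quad \frac{f_{m+1}(x)}{n^\gamma} \to \alpha, \quad \frac{s_{m+1}^2}{n^\gamma} \to 0, \quad \frac{r_{m+1}^2}{n^\gamma} \to 0,
\]
as $n \to \infty$ (equivalently, $m \to \infty$).

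The key preliminary step is an upper bound on individual blocks. From the hypothesis $f_m(x)/N_m^\gamma \to \alpha$, one has $f_{m+1}(x) \leqslant 2\alpha N_{m+1}^\gamma$ for large $m$, and since $n_{m+1}^2 \leqslant f_{m+1}(x)$, this yields $n_{m+1} \leqslant \sqrt{2\alpha}\,N_{m+1}^{\gamma/2}$. As $\gamma/2 < 1$, we deduce $n_{m+1}/N_{m+1} \to 0$ and therefore $N_m/N_{m+1} \to 1$. Feeding this back into the telescoping identity
\[
\frac{n_{m+1}^2}{N_{m+1}^\gamma} = \frac{f_{m+1}(x)}{N_{m+1}^\gamma} - \Bigl(\frac{N_m}{N_{m+1}}\Bigr)^{\!\gamma} \frac{f_m(x)}{N_m^\gamma}
\]
and letting $m \to \infty$ upgrades the bound to $n_{m+1}^2 = o(N_{m+1}^\gamma)$.

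With this in hand the conclusion is immediate. On the range $N_m \leqslant n \leqslant N_{m+1}$ the relation $N_m \sim N_{m+1}$ yields $n \sim N_m \sim N_{m+1}$, so
\[
\frac{f_m(x)}{n^\gamma} \sim \frac{f_m(x)}{N_m^\gamma} \to \alpha, \qquad \frac{f_{m+1}(x)}{n^\gamma} \sim \frac{f_{m+1}(x)}{N_{m+1}^\gamma} \to \alpha.
\]
The corrections $s_{m+1}^2$ and $r_{m+1}^2$ are both bounded above by $n_{m+1}^2 = o(N_{m+1}^\gamma) = o(n^\gamma)$, so they drop out. Substituting into the two asymptotic expressions from Proposition~\ref{PROP:psi-n-log-mu-bounds} gives the common limit $-\alpha \log 2$.

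The one substantive step, and the only place the assumption $\gamma < 2$ is used, is extracting $n_{m+1} = o(N_{m+1}^{\gamma/2})$ from the hypothesis. At the critical scaling $\gamma = 2$, a single block could be comparable to the entire prefix, the passage $N_m \sim N_{m+1}$ would fail, and the switch between the coding index $m$ and the word length $n$ would no longer be benign. Everything else is routine bookkeeping between these two scales.
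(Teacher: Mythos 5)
Your proposal is correct and follows essentially the same route as the paper: reduce via Proposition~\ref{PROP:psi-n-log-mu-bounds}, show $n_m/N_m \to 0$ (hence $N_m \sim N_{m+1}$) and $n_m^2 = o(N_m^\gamma)$ from the hypothesis using $\gamma < 2$, and then absorb the $r_{m+1}^2$, $s_{m+1}^2$ and $O(n)$ corrections. The paper establishes $n_m^2/N_m^\gamma \to 0$ by a contradiction with the $\limsup$ of the same telescoping identity you use directly, but this is only a cosmetic difference; your write-up simply fills in details the paper leaves implicit.
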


\begin{proof}
First, we will show that the convergence of $N_m^{-\gamma} f_m(x)$ implies that both $n_m/N_m$ and $n_m^2/N_m^{\gamma}$ converge to $0$. Indeed, whenever $n_m/N_m > \delta > 0$, we get $f_m(x) \geqslant \delta^2 N_m^2$, which can happen only for finitely many values of $m$. This implies also $\lim_{m \to \infty} N_m/N_{m+1} =1$. Finally, if $n_m^2/ N_m^{\gamma} > \delta >0$ for infinitely many $m$, we obtain
\[
\frac{f_{m}(x)}{N_{m}^{\gamma}}
=  \frac{f_{m-1}}{N_m^{\gamma}} + \frac{n_{m}^2}{N_m^{\gamma}},
\]
and applying the $\limsup$ to both sides yields $\alpha \geqslant \alpha + \delta$, a contradiction.
These observations offer enough control over the points $N_m \leqslant n < N_{m+1}$ to obtain the desired convergence from Proposition~\ref{PROP:psi-n-log-mu-bounds} (and the fact that $0 \leqslant r_m,s_m \leqslant n_m$ in the corresponding notation).
\end{proof}

In order to establish lower bounds for the Hausdorff dimension of level sets, we will make use of the following simple consequence of the mass distribution principle.
Recall that we define the upper density of a subset $M \subset \N$ via
\[
\overline{D}(M) = \limsup_{n \to \infty} \frac{1}{n} \# (M \cap [1,n]).
\]

\begin{lemma}
\label{LEM:dimH-density}
For $M \subset \N$ and $w \colon M \to \{0,1\}$ let
\[
A = A(w) = \{x \in \mathbb{X} \, : \, x_m = w_m \mbox{ for all } m \in M \}.
\]
Then, $\dim_H A \geqslant 1 - \overline{D}(M)$.
\end{lemma}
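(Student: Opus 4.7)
The plan is to apply the mass distribution principle to an explicit Bernoulli-type measure supported on $A$. Specifically, I would define $\nu$ as the product measure on $\mathbb{X}$ that is deterministic (equal to $\delta_{w_m}$) on coordinates $m \in M$ and uniform ($(1/2,1/2)$) on coordinates $m \notin M$. By construction $\nu(A) = 1$, so any lower bound on the local Hausdorff exponent of $\nu$ transfers to $A$.

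The first concrete step is to compute $\nu$ on cylinders. Writing $k_n = \#(M \cap [1,n])$, one checks that $\nu([v]) = 2^{-(n-k_n)}$ whenever $v \in \{0,1\}^n$ is consistent with $w$ on $M \cap [1,n]$, and $\nu([v]) = 0$ otherwise. Since every ball $B_r(x)$ in $\mathbb{X}$ of radius $r \in [2^{-n}, 2^{-n+1})$ coincides with the cylinder $C_n(x)$, we obtain the uniform bound
\[
\nu(B_r(x)) \leqslant 2^{-(n - k_n)}
\]
for all $x \in \mathbb{X}$, with the convention that the left-hand side is $0$ if the ball misses $A$.

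Next I would translate the density hypothesis into a bound on $n - k_n$. By definition of $\overline{D}(M)$, for every $\varepsilon > 0$ there exists $n_0$ such that $k_n \leqslant (\overline{D}(M) + \varepsilon) n$ for all $n \geqslant n_0$. Setting $s = 1 - \overline{D}(M) - \varepsilon$, this yields $n - k_n \geqslant s n$, and hence
\[
\nu(B_r(x)) \leqslant 2^{-sn} \leqslant 2^{s} \, r^s
\]
for all $x \in \mathbb{X}$ and all $r \leqslant 2^{-n_0+1}$. The mass distribution principle then gives $\dim_H A \geqslant s$, and letting $\varepsilon \to 0$ yields the claim.

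There is no real obstacle here; the argument is entirely standard and the only thing to be slightly careful with is that balls in our metric $d(x,y) = 2^{-k+1}$ are literally cylinder sets, so the passage from $\nu([v])$ to $\nu(B_r(x))$ is exact rather than up to a multiplicative constant. Everything else reduces to the elementary identity $\liminf_n (1 - k_n/n) = 1 - \overline{D}(M)$.
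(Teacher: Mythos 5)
Your proof is correct and follows essentially the same route as the paper's: both construct the product measure that is deterministic on $M$ and uniform on the free coordinates, evaluate it on cylinders, and conclude via the mass distribution principle. The only cosmetic difference is that the paper phrases the final step through the lower local dimension $\underline{d}_{\nu}(x) = 1 - \overline{D}(M)$ and the non-uniform mass distribution principle, whereas you extract a uniform Frostman-type bound $\nu(B_r(x)) \leqslant 2^{s} r^{s}$ for each $\varepsilon$ and then let $\varepsilon \to 0$.
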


\begin{proof}
We define a Bernoulli-like measure $\nu$ on $A$ by ``ignoring the determined positions". More precisely, for every $n \in \N$ let $P_n = \{1,\ldots,n\} \setminus M$ be the free positions and set $c_n = \# P_n$. Clearly, there are $2^{c_n}$ choices for $v \in \{0,1\}^n$ such that $[v]$ intersects $A$ and we set
\[
\nu [v] = \begin{cases}
2^{-c_n} & \mbox{if } [v] \cap A \neq \varnothing,
\\ 0  &\mbox{otherwise}.
\end{cases}
\]
It is straightforward to check that this definition is consistent and there is a unique measure $\nu$ with this property by the Kolmogorov extension theorem. We obtain for every $x \in A$ and $n \in \N$ that $\nu(C_n(x)) = 2^{-c_n}$ and therefore the lower local dimension of $\nu$ at $x$ is given by
\[
\underline{d}_{\nu}(x) = \liminf_{n \to \infty} \frac{\log \nu(C_n(x))}{-n \log 2} = \liminf_{n \to \infty} \frac{c_n}{n} = 1 - \overline{D}(M).
\]
The claim hence follows via the (non-uniform) mass distribution principle.
\end{proof}

With the help of Lemma~\ref{LEM:dimH-density}, we will show that for every $\beta > 0$, the situation in Lemma~\ref{LEM:gamma-limits} is typical in the sense of full Hausdorff dimension.

\begin{prop}
\label{PROP:n-gamma-full}
For every $\gamma \in (1,2)$ and $\alpha > 0$, we have 
\[
\dim_H \{ x \in \mathbb{X} \setminus \mc D : f_m(x) \sim \alpha N_m^{\gamma} \} = 1.
\]
\end{prop}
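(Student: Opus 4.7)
The plan is to use Lemma~\ref{LEM:dimH-density} with a determined set $M$ of upper density zero, combined with a Borel--Cantelli estimate for the unrestricted ``free'' coordinates, which automatically forces the desired asymptotic on a subset of full $\nu$-measure.

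First, I would fix an exponent $p > (\gamma-1)/(2-\gamma)$ and set $q = (2p+1)/\gamma$; a direct computation gives $p+1 < q < 2p+1$. With a constant $c > 0$ to be fixed below, I would put $Q_k = \lceil k^q \rceil$ and $L_k = \lfloor c k^p \rfloor$, and form the intervals $J_k = \{Q_k - 1, Q_k, \ldots, Q_k + L_k\}$, which are disjoint for all $k \geq k_0$ sufficiently large. Setting $M = \bigcup_{k \geq k_0} J_k$ and $w \colon M \to \{0,1\}$ via $w_{Q_k-1} = w_{Q_k+L_k} = 1$ and $w_n = 0$ for $Q_k \leq n < Q_k + L_k$, every $x \in A := A(w)$ has its alternation coding split into marker runs of length exactly $L_k$ (the $0$-block flanked by the forced $1$'s) together with free runs filling the open windows $I_k = [Q_{k-1}+L_{k-1}+1, Q_k - 2]$ of length $|I_k| \sim q k^{q-1}$. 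Since $|J_k| = O(k^p)$ and the number of markers with $Q_k \leq N$ is $O(N^{1/q})$, one finds $|M \cap [1,N]| = O(N^{(p+1)/q})$ with exponent strictly less than $1$, so $\overline{D}(M) = 0$ and Lemma~\ref{LEM:dimH-density} provides a measure $\nu$ on $A$ (uniform Bernoulli on the free coordinates) satisfying $\underline{d}_\nu \equiv 1$.

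For the asymptotic evaluation, let $m_K$ denote the alternation index at the right end of the $K$-th marker, so $N_{m_K} \sim K^q$. Choosing $c$ via $c^2 = \alpha(2p+1)$, the marker contribution satisfies
\[
\sum_{k=1}^K L_k^2 \sim \frac{c^2}{2p+1} K^{2p+1} = \alpha K^{2p+1} \sim \alpha N_{m_K}^\gamma.
\]
For the free contribution I would use the standard longest-run estimate for iid Bernoulli words: the probability of a run of length at least $A \log_2 |I_k|$ inside $I_k$ is $O(|I_k|^{1-A})$, which is Borel--Cantelli summable in $k$ as soon as $A > 1 + 1/(q-1)$. Hence $\nu$-a.s.\ the longest free run inside $I_k$ is at most $A \log_2 |I_k|$ for all but finitely many $k$, giving
\[
\sum_{k=1}^K \sum_{i \text{ free in } I_k} n_i^2 \,\leq\, A \log_2 |I_K| \sum_{k=1}^K |I_k| \,=\, O(K^q \log K) \,=\, o(K^{2p+1}),
\]
using $q < 2p+1$. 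Between consecutive indices $m_K, m_{K+1}$ both oscillations $L_{K+1} + |I_{K+1}| = o(N_{m_K})$ and $L_{K+1}^2 + O(K^{q-1}\log K) = o(f_{m_K})$ are negligible, so the asymptotic $f_m \sim \alpha N_m^\gamma$ persists for all $m \to \infty$, $\nu$-a.s.

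Setting $A' = \{x \in A \setminus \mc D : f_m(x) \sim \alpha N_m^\gamma\}$, one has $\nu(A') = 1$; since $\underline{d}_\nu \equiv 1$ on $A'$, the mass distribution principle yields $\dim_H A' \geq 1$, and combined with the trivial upper bound this gives the proposition. The main difficulty will be the Borel--Cantelli control of the free-window run statistics, which ensures the free coordinates cannot produce runs long enough to spoil the dominance of the markers in $f_m$; the choice of exponents $p$ and $q$ is forced by the competition between the marker blocks (which must supply the polynomial growth of $f_m$) and the free coordinates (which must occupy a density-one fraction of positions to preserve Hausdorff dimension $1$).
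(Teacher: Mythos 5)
Your construction is essentially the paper's: markers at positions $\theta_k=k^q$ carrying constant blocks of length $\asymp k^{(q\gamma-1)/2}$, tuned so that $\sum_k L_k^2 \sim \alpha\theta_K^{\gamma}$, with the dimension bound coming from Lemma~\ref{LEM:dimH-density} and the zero upper density of the determined positions. The one place where you genuinely diverge is the control of the non-marker (``free'') alternation blocks. The paper handles these deterministically: it additionally forces the pattern $10$ at every position of $\Lambda\N$ outside the marker set, so that every free block has length at most $O(\Lambda)$ and the free contribution to $f_m$ is $O(N_m\Lambda^2)=o(N_m^{\gamma})$ for \emph{every} point of the set; the price is an extra density $2/\Lambda$ of determined positions, forcing the limit $\Lambda\to\infty$ at the end. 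You instead leave the free coordinates entirely free and control long runs only $\nu$-almost surely via the longest-run/Borel--Cantelli estimate, which lets you reach dimension $1$ from a single set rather than a sequence of sets. The cost is that you must pass to a full-$\nu$-measure subset $A'$ of $A(w)$, so you cannot quote Lemma~\ref{LEM:dimH-density} as a black box; you need (as you correctly do) the non-uniform mass distribution principle applied to the Bernoulli measure constructed in that lemma's proof, together with the observation that $\nu(A')=1>0$ and $\underline{d}_\nu\equiv 1-\overline{D}(M)=1$ on $A$. Both routes are sound; yours is marginally more probabilistic in flavor, the paper's is marginally more elementary (no almost-sure statements), and the exponent bookkeeping ($p>(\gamma-1)/(2-\gamma)$, $q=(2p+1)/\gamma$, $c^2=\alpha(2p+1)$, and the interpolation between consecutive $m_K$) checks out in your version.
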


\begin{proof}
We construct a subset with Hausdorff dimension arbitrarily close to $1$. The dimension estimate will be provided by Lemma~\ref{LEM:dimH-density}. Hence, we want to find a subset $M \in \N$ of arbitrarily small upper density, such that fixing $x$ on $M$ in an appropriate way ensures that $f_m(x) \sim \alpha N_m^{\gamma}$. 
The general strategy is the following: We choose a sequence $(\theta_k)_{k \in \N}$ of positive real numbers such that $\theta_{k} - \theta_{k-1} \to \infty$ but $\theta_{k-1}/\theta_k \to 1$ as $k \to \infty$. To ensure $f_m(x) \sim \alpha N_m^{\gamma}$, we fix $x \in \mathbb{X}$ to be constant on an interval of some appropriate length $c_k$ in $[\theta_{k},\theta_{k+1}]$, and to have bounded alternation blocks outside of these intervals. Using that $c_k$ grows slower than $\theta_{k+1} - \theta_k$, this will fix $x$ on a set of positions with arbitrarily small density. The details follow.

For definiteness, we fix some large number $r = r(\gamma)$ (the exact value will be determined later) and set $\theta_k = k^r$. For $r > 1$ this satisfies $\theta_{k} - \theta_{k-1} \to \infty$ and $\theta_{k-1}/\theta_k \to 1$ for $k \to \infty$, as required. An appropriate choice of $c_k$ turns out to be
\begin{equation}
\label{EQ:c-k}
c_k = \sqrt{r \gamma \alpha} k^{\delta},
\quad \delta = \frac{r \gamma - 1}{2}.
\end{equation}
For this to grow slower than $\theta_k - \theta_{k-1}$, we require that $\delta < r-1$. Since 
\[
\frac{\delta}{r-1} = \frac{r \gamma - 1}{2 r - 2} \xrightarrow{r \to \infty} \frac{\gamma}{2} < 1,
\]
this holds true for large enough $r$ and we take some $r = r(\gamma) > 2$ with this property.
Hence, we can choose $k_0 \in \N$ such that $c_k < \theta_{k} - \theta_{k-1}$ for all $k \geqslant k_0$. We specify a set of positions via
\[
M_1 = \bigcup_{k \geqslant k_0} \{ n \in \N :  \theta_{k} - c_k \leqslant n \leqslant \theta_{k} \},
\]
and define 
\[
Q = \{ x \in \mathbb{X}\setminus \mc D: x_n = 0 \; \mbox{ for all } \; n \in M_1\}.
\]
To avoid large blocks outside of $M_1$ we further fix a large cutoff-value $\Lambda \in \N$ and set
\[
R_{\Lambda} = \{ x \in \mathbb{X}\setminus \mc D: x_n x_{n+1} = 10 \; \mbox{ for all } \; n \in \Lambda \N \setminus M_1 \}.
\]
Finally, we combine both conditions by setting
\[
A_{\Lambda} = Q \cap R_{\Lambda}.
\]
Given $x \in A_{\Lambda}$, we want to show that $f_m(x) \sim \alpha N_m^{\gamma}$. The definition of $Q$ implies that $x$ is constant on $[\theta_{k} - c_k, \theta_k]$ for each $k \in \N$. If $\tau(x) = (n_i)_{i \in \N}$ is the alternation coding of $x$, this implies that for every $k$ there is a unique index $i_k$ such that $N_{i_k-1} \leqslant \lceil \theta_{k} - c_k \rceil \leqslant \lfloor \theta_{k} \rfloor \leqslant N_{i_k}$, and in particular $n_{i_k} \geqslant c_k - 2$. Since $R_{\Lambda}$ restricts the length of blocks outside of $M_1$, we find that $n_{i_k}$ can in fact not be much larger and hence
\[
n_{i_k} = c_k + O(1),
\]
where the implied constant depends on $\Lambda$. 
For all other indices $i$ we have that $n_i \leqslant \Lambda$ is bounded by a constant. Hence, for $i_k \leqslant m < i_{k+1}$ we obtain
\[
f_m(x) = \sum_{i=1}^m n_i^2 = \sum_{\ell=1}^k n_{i_\ell}^2 + O(m) 
\sim \sum_{\ell=1}^k c_\ell^2.
\]
With the specific choice of $c_k$ in \eqref{EQ:c-k}, we obtain
\[
\sum_{\ell=1}^k c_\ell^2 = \alpha \sum_{\ell=1}^k r \gamma  \ell^{r \gamma -1} \sim \alpha k^{r\gamma},
\]
using an integral estimate in the last equation.
Since $\theta_k \sim \theta_{k+1}$ and by the monotonicity of $N_m$, we also observe that $N_m \sim \theta_k = k^{r}$ for $i_k \leqslant m < i_{k+1}$, and therefore
\[
f_m(x) \sim \alpha k^{r \gamma} \sim \alpha N_m^{\gamma},
\]
as required. That is, $A_{\Lambda} \subset \{ f_m \sim \alpha N_m^{\gamma} \}$ for every $\Lambda \in \N$ and it suffices to find an appropriate lower bound for the Hausdorff dimension of $A_\Lambda$. Since the positions in $M_1$ are accumulated to the left of the values $\theta_k$, we obtain
\[
\overline{D}(M_1) = \limsup_{k \to \infty} \frac{1}{\theta_k} \sum_{\ell = 1}^{k} c_{\ell} 
= \limsup_{k \to \infty} \frac{1}{k^{r \gamma}} \sqrt{r \gamma \alpha} \frac{k^{\delta+1}}{\delta+1} = 0,
\]
using that $\delta + 1 < r \gamma$ in the last step. Because the points in $A_{\Lambda}$ are fixed on the positions given by $M_1 \cup \Lambda \N \cup  (\Lambda\N + 1)$, we obtain by Lemma~\ref{LEM:dimH-density},
\[
 \dim_H A_{\Lambda} \geqslant 1 - \overline{D}\bigl(M_1 \cup \Lambda \N \cup  (\Lambda\N + 1) \bigr) = 1 - \frac{2}{\Lambda}.
\]
Since this is a lower bound for $\dim_H\{f_m \sim \alpha N_m^{\gamma} \}$ and $\Lambda \in \N$ was arbitrary, the claim follows.
\end{proof}

\begin{proof}[Proof of Theorem~\ref{THM:n-gamma-scaling}]
For $\alpha > 0$, the desired relation follows by combining Proposition~\ref{PROP:n-gamma-full} with Lemma~\ref{LEM:gamma-limits}. For $\alpha = 0$, simply recall that both $S_n \psi(x)$ and $ \log \mu(C_n(x))$ scale linearly with $n$ for a set of full Hausdorff dimension \cite{BGKS}.
\end{proof}


\section{Spreading of accumulation points}
\label{SEC:spreading}

We specialize to the scaling function $n \mapsto n^2$ for the remainder of this article.
We continue with the standing assumption that $x \in \mathbb{X} \setminus \mc D$.
By Proposition~\ref{PROP:psi-n-log-mu-bounds}, the accumulation points for $-\log \mu(C_n(x))/(n^2 \log 2)$ are the same as those of 
\[
\xi_n^{\mu}(x):= \frac{1}{n^2} \biggl(\sum_{i=1}^m n_i^2 + (n-N_m)^2 \biggr), \mbox{ if } N_m \leqslant n < N_{m+1}.
\]
Similarly, the accumulation points of $-S_n \psi(x)/(n^2 \log 2)$ coincide with those of
\[
\xi_n^{\psi}(x):= \frac{1}{n^2} \biggl(\sum_{i=1}^{m} n_i^2 - (N_m-n)^2 \biggr), \mbox{ if } N_{m-1} \leqslant n < N_{m}.
\]
Recall the notation $F_m(x) = N_m^{-2} f_m(x)$, together with $\underline{F}(x) = \liminf_{m \to \infty} F_m(x)$ and $\overline{F}(x) = \limsup_{m \to \infty} F_m(x)$.
The strict convexity of the function $s \mapsto s^2$ causes the sequence $\xi_n^{\mu}(x)$ to take its minimum  on $[N_m,N_{m+1}]$ at some intermediate point, provided that $n_{m+1}$ is sufficiently large; compare Figure~\ref{FIG:square-interpolation}. This gives rise to a drop of the $\liminf$, as compared to $\underline{F}(x)$. 

\begin{lemma}
\label{LEM:h-mu-f-relation}
Given $\underline{F}(x) = \alpha$ and $\overline{F}(x) = \beta$, we have
\[
\liminf_{n \to \infty} \xi_n^{\mu}(x) = \frac{\alpha}{1 + \alpha}, \quad
\limsup_{n \to \infty} \xi_n^{\mu}(x) = \beta.
\]
\end{lemma}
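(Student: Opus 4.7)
The plan is to analyse $\xi_n^{\mu}$ on each interval $[N_m, N_{m+1}]$ and then translate the resulting inequalities into statements about $\liminf$ and $\limsup$ via the behaviour of $F_m$. Using the reparametrization $n = N_m(1+\delta)$, $\delta \in I_m := [0, n_{m+1}/N_m]$, a short calculation gives
\[
\xi_n^{\mu}(x) = \frac{F_m + \delta^2}{(1+\delta)^2}
\]
on this interval. Its derivative $2(\delta - F_m)/(1+\delta)^3$ vanishes only at $\delta = F_m$, so the function is decreasing on $[0, F_m]$ and increasing on $[F_m, \infty)$, attaining global minimum $F_m/(1+F_m)$ at $\delta = F_m$ and maximum on any sub-interval at an endpoint. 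Hence, for $n \in [N_m, N_{m+1}]$,
\[
\frac{F_m}{1+F_m} \leq \xi_n^{\mu}(x) \leq \max(F_m, F_{m+1}),
\]
and $\xi_{N_m}^{\mu}(x) = F_m$.

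The $\limsup$ identity is immediate: the upper bound yields $\limsup_n \xi_n^{\mu} \leq \limsup_m \max(F_m, F_{m+1}) = \beta$, while the subsequence $\xi_{N_m}^{\mu} = F_m$ supplies the matching lower bound. Likewise, continuity and monotonicity of $s \mapsto s/(1+s)$ on $[0,\infty)$ give $\liminf_n \xi_n^{\mu} \geq \liminf_m F_m/(1+F_m) = \alpha/(1+\alpha)$.

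The main obstacle is the matching upper bound $\liminf_n \xi_n^{\mu} \leq \alpha/(1+\alpha)$. The interior minimum $F_m/(1+F_m)$ is actually attained on $I_m$ exactly when $m \in A_0 := \{m : n_{m+1} \geq F_m N_m\}$; for $m \notin A_0$ the function decreases monotonically on $[N_m, N_{m+1}]$ from $F_m$ to $F_{m+1}$. When $\alpha = 0$ the bound follows from $\xi_{N_m}^{\mu} = F_m$ along a suitable subsequence. For $\alpha > 0$ it suffices to produce a subsequence $m_k \in A_0$ with $F_{m_k} \to \alpha$; indeed, $\xi_n^{\mu}$ then comes arbitrarily close to $F_{m_k}/(1+F_{m_k}) \to \alpha/(1+\alpha)$ for $n$ near $N_{m_k}(1+F_{m_k})$ (which lies in $[N_{m_k}, N_{m_k+1}]$ precisely because $m_k \in A_0$).

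I would rule out the two potential obstructions to such a subsequence. First, $A_0$ cannot be finite: otherwise case B (i.e., $\delta_m := n_{m+1}/N_m < F_m$) holds for all large $m$, so $F_m \searrow \alpha$; a direct expansion yields
\[
F_m - F_{m+1} = \frac{\delta_m(2F_m - (1-F_m)\delta_m)}{(1+\delta_m)^2} \geq c\delta_m
\]
for some $c > 0$ once $F_m$ is close to $\alpha > 0$. Since $\sum_m \delta_m \geq \sum_m \log(1 + \delta_m) = \log(\lim N_m/N_{m_0}) = \infty$, telescoping would force $F_m \to -\infty$, contradicting $F_m \geq 0$. Second, $\alpha^* := \liminf_{m \in A_0} F_m$ cannot strictly exceed $\alpha$: if it did, pick $m_k$ with $F_{m_k} \to \alpha$ and $F_{m_k} < \alpha^*$ for large $k$; then $m_k \notin A_0$, case B gives $F_{m_k+1} < F_{m_k} < \alpha^*$, and inductively $m_k + j \notin A_0$ and $F_{m_k+j} < \alpha^*$ for every $j \geq 0$, contradicting $|A_0| = \infty$. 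With both obstructions eliminated, the required subsequence exists and the liminf identity follows.
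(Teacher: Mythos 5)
Your proposal is correct, and the easy parts (the $\limsup$ identity and the inequality $\liminf_n \xi^{\mu}_n \geqslant \alpha/(1+\alpha)$) coincide with the paper's argument. Where you genuinely diverge is in the hard direction $\liminf_n \xi^{\mu}_n \leqslant \alpha/(1+\alpha)$. The paper observes that the identity $\xi^{\mu}_n(x) = (F_m + c^2)/(1+c)^2$ for $n=(1+c)N_m$ degrades only to an \emph{inequality} $\xi^{\mu}_n(x) \leqslant (F_m+c^2)/(1+c)^2$ when $n > N_{m+1}$ (by superadditivity of $s\mapsto s^2$ over the intervening blocks), so it can simply evaluate at $n \approx N_{m_k}(1+F_{m_k})$ along any subsequence with $F_{m_k}\to\alpha$, with no concern about whether this $n$ lies in $[N_{m_k},N_{m_k+1}]$. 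You instead insist on staying inside the interval where equality holds, which forces you to prove that the interior minimizer $\delta = F_m$ is actually attained in $I_m$ along a subsequence realizing the $\liminf$; your two-obstruction argument (divergence of $\sum \delta_m$ via the telescoping logarithm, and the induction showing $F$ stays trapped below $\alpha^*$ once it drops there) does accomplish this and is a nice self-contained piece of reasoning, but it is considerably longer than the paper's one-line observation. One small imprecision to repair in the second obstruction: $F_{m_k} < \alpha^*$ alone does not force $m_k \notin A_0$, since $\liminf_{m\in A_0}F_m = \alpha^*$ permits infinitely many $m \in A_0$ with $F_m$ slightly below $\alpha^*$; you should fix $\varepsilon = (\alpha^*-\alpha)/2$ and run the argument with the threshold $\alpha^*-\varepsilon$ (only finitely many $m\in A_0$ lie below it), which your monotone induction then preserves. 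With that adjustment the proof is complete.
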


\begin{proof}
We start with the assertion about the $\liminf$.
Let $m \in \N$ and assume that $n = (1+ c) N_m$ (not necessarily $n < N_{m+1}$) for some $c\geqslant 0$. We obtain
\[
n^2 \xi_n^{\mu}(x) \leqslant \sum_{i=1}^{m} n_i^2 + (c N_m)^2 = N_m^2 (F_m(x) + c^2),
\]
with equality if and only if $n \leqslant N_{m+1}$.
Hence,
\begin{equation}
\label{EQ:f_n-in-between-bound}
\xi_n^{\mu}(x) \leqslant \frac{F_m(x) + c^2}{(1+c)^2},
\end{equation}
again with equality precisely if $n \leqslant N_{m+1}$.
For $r > 0$, the function
\[
\phi_{r} \colon c \mapsto  \frac{r + c^2}{(1+c)^2}
\]
is strictly decreasing on $[0,r)$, takes a minimum in $c = r$ and is increasing for $c \geqslant r$. This yields for $N_m \leqslant n \leqslant N_{m+1}$,
\[
\xi_n^{\mu}(x) \geqslant \min_{c > 0} \phi_{F^{}_m(x)}(c)
=  \frac{F_m(x)}{1 + F_m(x)},
\]
and in particular,
\[
\liminf_{n\to \infty} \xi_n^{\mu}(x) \geqslant
\liminf_{m \to \infty} \frac{F_m(x)}{1 + F_m(x)}
= \frac{\alpha}{1 + \alpha}.
\]
On the other hand, let 
\[
c_m = \frac{\lfloor{ F_m(x) N_m \rfloor}}{N_m},
\]
and note that if $F_{m_k}(x)$ converges to $\alpha$, then so does $c_{m_k}$ as $k \to \infty$. In particular, we find for $r_k = N_{m_k} (1 + c_{m_k})$ that
\[
\xi^{\mu}_{r_k}(x) 
\leqslant \frac{F_{m_k}(x) + c_{m_k}^2}{(1 + c_{m_k})^2}
\xrightarrow{k \to \infty} 
\frac{\alpha}{1 + \alpha},
\]
and the claim on the $\liminf$ follows.

For $n = (1+ c)N_m$ let $I$ be the interval of values $c$ such that $N_m \leqslant n \leqslant N_{m+1}$. Due to the monotonicity properties of $c \mapsto \phi_{F^{}_m(x)}(c)$, its maximum on $I$ is obtained on a boundary point. By \eqref{EQ:f_n-in-between-bound}, we hence conclude that $\xi_n^{\mu}(x) \leqslant F_m(x)$ or $\xi_n^{\mu}(x) \leqslant F_{m+1}(x)$, with equality if $n = N_m$ or $n = N_{m+1}$, respectively.
This implies the assertion about the $\limsup$. 
\end{proof}

\begin{lemma}
\label{LEM:h-psi-f-relation}
Given $\underline{F}(x) = \alpha$ and $\overline{F}(x) = \beta$, we have
\[
\liminf_{n \to \infty} \xi_n^{\psi}(x) = \alpha, \quad
\limsup_{n \to \infty} \xi_n^{\psi}(x) = \frac{\beta}{1-\beta}.
\]
\end{lemma}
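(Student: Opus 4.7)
The strategy mirrors that of Lemma~\ref{LEM:h-mu-f-relation}, adapted to the reciprocal scaling. For $n \in [N_{m-1}, N_m)$, set $c = (N_m - n)/N_m \in (0, n_m/N_m]$, so that $\xi_n^{\psi}(x) = \phi_{F_m(x)}(c)$ with $\phi_r(c) := (r - c^2)/(1-c)^2$. A direct differentiation gives $\phi_r'(c) = 2(r-c)/(1-c)^3$, so $\phi_r$ strictly increases on $[0,r]$ to its unique maximum $\phi_r(r) = r/(1-r)$ and strictly decreases on $[r,1)$. The relevant boundary values are $\phi_{F_m}(0) = F_m$ and $\phi_{F_m}(n_m/N_m) = F_{m-1}$, the latter following from $(f_m - n_m^2)/N_{m-1}^2 = F_{m-1}$.

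For the $\liminf$ statement, I would check that in either case (peak inside $[0, n_m/N_m]$ or not) the minimum of $\phi_{F_m}$ on this interval is attained at an endpoint and equals $\min(F_m, F_{m-1})$; indeed, when $n_m/N_m \leq F_m$, monotonicity forces $F_m \leq F_{m-1}$. Taking the $\liminf$ over $m$ immediately yields $\liminf_n \xi_n^{\psi} = \alpha$. The upper bound $\limsup_n \xi_n^{\psi} \leq \beta/(1-\beta)$ is equally routine, since $\max_{c} \phi_{F_m}(c) \leq \sup_{c \in [0,1)} \phi_{F_m}(c) = F_m/(1-F_m)$ combined with $F_m \leq \beta + \epsilon$ eventually gives $\limsup_n \xi_n^{\psi} \leq (\beta+\epsilon)/(1-\beta-\epsilon)$.

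The main obstacle is the matching lower bound $\limsup_n \xi_n^{\psi} \geq \beta/(1-\beta)$. The peak value $F_m/(1-F_m)$ is attained only when $n_m/N_m \geq F_m$, equivalently $u_m := n_m/N_{m-1} \geq F_{m-1}$ (``Case B''); otherwise the maximum on the truncated interval reduces to the smaller value $F_{m-1}$. The key claim is therefore that for every $\epsilon > 0$ there exist infinitely many Case B indices $m$ with $F_m > \beta - \epsilon$; granted this, along such indices $\xi_n^{\psi}$ attains values at least $F_m/(1-F_m) \geq (\beta-\epsilon)/(1-\beta+\epsilon)$, and letting $\epsilon \to 0$ finishes the argument.

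To prove the claim I would split into two scenarios according to whether $F_m \leq \beta - \epsilon$ holds for infinitely many $m$ or not. In the first case, since $F_m > \beta - \epsilon$ also holds infinitely often (by $\limsup F_m = \beta$), there are infinitely many upward crossings of the level, i.e.\ indices $m'$ with $F_{m'-1} \leq \beta - \epsilon < F_{m'}$. For such $m'$ one has $F_{m'} > F_{m'-1}$, which forces $u_{m'} \geq 2F_{m'-1}/(1-F_{m'-1}) \geq F_{m'-1}$, placing $m'$ in Case B with $F_{m'} > \beta - \epsilon$. In the second scenario, $F_m > \beta - \epsilon$ for all large $m$; assuming for contradiction that none of these lies in Case B, every such $m$ satisfies $u_m < F_{m-1}$ (Case A), which entails $F_m < F_{m-1}$. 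Thus $F_m$ is eventually strictly decreasing and converges to some $\beta^\ast \in [\beta-\epsilon, 1)$, the endpoint $\beta^\ast = 1$ being excluded because $F_m \to 1$ requires $u_m \to \infty$, contradicting $u_m < F_{m-1} \leq 1$. Writing $\delta_m := F_m - \beta^\ast \searrow 0$ and using $F_{m-1} = F_m(1+u_m)^2 - u_m^2$, one obtains
\[
\delta_{m-1} - \delta_m = 2u_m(\beta^\ast + \delta_m) - u_m^2(1 - \beta^\ast - \delta_m),
\]
whose telescoping sum is finite. Every accumulation point of $(u_m)$ is therefore a root of the limiting quadratic $2u\beta^\ast - u^2(1-\beta^\ast) = 0$, i.e.\ lies in $\{0,\, 2\beta^\ast/(1-\beta^\ast)\}$. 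Accumulating at the larger root would force $u_m > F_{m-1}$ infinitely often (as $2\beta^\ast/(1-\beta^\ast) > \beta^\ast$), contradicting Case A, so $u_m \to 0$. Then $\delta_{m-1} - \delta_m \sim 2\beta^\ast u_m$ yields $\sum u_m < \infty$, and hence $\log(N_m/N_{M_0}) = \sum \log(1+u_i) \leq \sum u_i$ remains bounded, contradicting $N_m \geq m \to \infty$. This rules out the assumption and completes the proof.
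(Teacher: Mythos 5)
Your proposal is correct, but for the one genuinely delicate step it takes a different and noticeably longer route than the paper. The paper's proof leans on the same observation that drives Lemma~\ref{LEM:h-mu-f-relation}: the bound $\xi_n^{\psi}(x) \geqslant \bar{\phi}_{F_m(x)}(c)$ with $n = (1-c)N_m$ holds for \emph{every} $c \in [0,1)$, with equality precisely when $n \geqslant N_{m-1}$. One may therefore always evaluate at $n \approx (1-F_m(x))N_m$ --- the location of the peak --- and obtain $\xi_n^{\psi}(x) \geqslant \bar{\phi}_{F_{m_k}(x)}(c_{m_k}) \to \beta/(1-\beta)$ along a subsequence with $F_{m_k}(x) \to \beta$, \emph{regardless of whether that $n$ lands in the block $[N_{m_k-1},N_{m_k})$ or in an earlier one}. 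This makes the lower bound for the $\limsup$ a one-line mirror of the $\liminf$ argument in Lemma~\ref{LEM:h-mu-f-relation}. You instead restrict to exact equality on the admissible interval and are then forced to show that the peak is actually attainable within the correct block (your ``Case B'') infinitely often along indices where $F_m$ is near $\beta$. Your crossing argument for the first scenario and the contradiction in the second scenario (summability of $\delta_{m-1}-\delta_m$ forcing $u_m \to 0$, hence $\sum u_m < \infty$, hence $N_m$ bounded) are both valid --- I checked the equivalence $n_m/N_m \geqslant F_m \Leftrightarrow u_m \geqslant F_{m-1}$ and the identity $F_{m-1}-F_m = 2u_mF_m - u_m^2(1-F_m)$, and they hold --- so the proof goes through; it is simply heavier machinery than needed. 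Two small points that you share with the paper but should at least acknowledge: the peak location must be rounded to an integer $n$ (harmless when $\beta < 1$ since $\bar{\phi}_{F_m}$ has bounded second derivative near the peak, but worth a sentence), and the case $\beta = 1$, where $\beta/(1-\beta) = \infty$, needs the peak to be approached from slightly below rather than hit exactly.
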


\begin{proof}
This is similar to the proof of Lemma~\ref{LEM:h-mu-f-relation}. With $m \in \N$ and $n = (1-c)N_m$ for some $0 \leqslant c < 1$, we get
\[
n^2 \xi_n^{\psi}(x) \geqslant \sum_{i=1}^m n_i^2 - (cN_m)^2 = N_m^2 (F_m(x) - c^2),
\]
with equality if and only if $n \geqslant N_{m-1}$. Therefore,
\[
\xi_n^{\psi}(x) \geqslant \frac{F_m(x) - c^2}{(1-c)^2} =: \bar{\phi}_{F_m(x)}(c),
\]
again with equality if and only if  $n \geqslant N_{m-1}$.
For $0 < r < 1$, the function $\bar{\phi}_{r}(c)$ is strictly increasing on $[0,r)$, takes a maximum in $c = r$ and is decreasing for $c \in (r,1)$. Noting that $\bar{\phi}_{r}(r) = r/(1-r)$, the rest follows precisely as in the proof of Lemma~\ref{LEM:h-mu-f-relation}.
\end{proof}

\begin{proof}[Proof of Proposition~\ref{PROP:accumulation-gaps}] The corresponding statements for the accumulation points of $(\xi^{\mu}_n(x))_{n \in \N}$ and $(\xi^{\psi}_n(x))_{n \in \N}$ are given in Lemma~\ref{LEM:h-mu-f-relation} and Lemma~\ref{LEM:h-psi-f-relation}. Combining this with Proposition~\ref{PROP:psi-n-log-mu-bounds} gives the desired relations for the Birkhoff sums and the measure decay.
\end{proof}




\section{Lower bounds}
\label{SEC:lower-bound}

We want to establish necessary and sufficient criteria for $x$ to satisfy $\underline{F}(x) = \alpha$ and $\overline{F}(x) = \beta$.
We show that this requires a certain number of large blocks in the alternation coding $\tau(x) =(n_i)_{i \in \N}$.
To be more precise, let us start with a certain large cutoff-value $\Lambda \in \N$ and let
\[
I_{\Lambda} = \{ i \in \N : n_i \geqslant \Lambda \}.
\]
It will be convenient to ignore all contributions of  $n_i$ to $F_n(x)$ as long as $n_i < \Lambda$. This is achieved by setting 
\[
F^{\Lambda}_m(x) = \frac{1}{N_m^2} \sum_{i \in I_\Lambda \cap [1,m]} n_i^2.
\]

\begin{lemma}
\label{LEM:F-Delta-convergence}
We have $|F_m(x) - F^\Lambda_m(x)| \in O(N_m^{-1})$. Hence, $(F^\Lambda_m(x))_{m \in \N}$ and $(F_m(x))_{m \in \N}$ have the same set of accumulation points.
\end{lemma}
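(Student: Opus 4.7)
The plan is to directly estimate the discarded contribution $F_m(x) - F_m^\Lambda(x)$, and then to invoke the fact that $N_m \to \infty$ so that an $O(N_m^{-1})$ deviation cannot affect accumulation points.

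First I would observe that
\[
F_m(x) - F_m^\Lambda(x) = \frac{1}{N_m^2} \sum_{i \in [1,m] \setminus I_\Lambda} n_i^2,
\]
so the difference is non-negative and is controlled by the contribution of the small blocks. For any index $i \notin I_\Lambda$ we have $n_i < \Lambda$ by definition, which gives the crucial linearization $n_i^2 < \Lambda \cdot n_i$. Summing this over the small-block indices and then trivially extending the sum to all indices yields
\[
\sum_{i \in [1,m] \setminus I_\Lambda} n_i^2 \leqslant \Lambda \sum_{i \in [1,m] \setminus I_\Lambda} n_i \leqslant \Lambda \sum_{i=1}^m n_i = \Lambda N_m.
\]
Dividing by $N_m^2$ gives $|F_m(x) - F_m^\Lambda(x)| \leqslant \Lambda / N_m$, which is exactly the claimed $O(N_m^{-1})$ bound (with implied constant $\Lambda$, depending on the chosen cutoff).

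For the second claim, since $n_i \geqslant 1$ for every $i$ we have $N_m \geqslant m$, so $N_m \to \infty$ as $m \to \infty$, and therefore $F_m(x) - F_m^\Lambda(x) \to 0$. A sequence and any additive perturbation of it by a null sequence share the same set of accumulation points, which concludes the argument. There is no serious obstacle here; the only thing to watch is the linearization step $n_i^2 \leqslant \Lambda n_i$ valid on the complement of $I_\Lambda$, which is what makes the trivial bound $\sum n_i \leqslant N_m$ strong enough to beat the $N_m^2$ in the denominator.
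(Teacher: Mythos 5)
Your proof is correct and follows essentially the same route as the paper: a direct bound on the discarded sum $\sum_{i \notin I_\Lambda} n_i^2$ divided by $N_m^2$, followed by the standard observation that a perturbation tending to zero does not change accumulation points. The only (cosmetic) difference is that you linearize via $n_i^2 \leqslant \Lambda n_i$ to get the constant $\Lambda$, whereas the paper bounds each term by $\Lambda^2$ and uses $m \leqslant N_m$, getting the constant $\Lambda^2$; both yield $O(N_m^{-1})$.
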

\begin{proof}
This follows by
\[
|F_m(x) - F^\Lambda_m(x)| = \frac{1}{N_m^2} \sum_{i \in [1,m]\setminus I_\Lambda} n_i^2 <
\frac{1}{N_m^2} m \Lambda^2 \leqslant \frac{\Lambda^2}{N_m},
\]
which gives the desired estimate.
\end{proof}

In principle, it is possible that $F_m^{\Lambda}(x) = 0$ for all $m \in \N$. However, this can only happen if $\overline{F}(x) = 0$, a case that we will treat separately. In the following, we always assume that $m$ is large enough to ensure $F_m^\Lambda(x) > 0$.

If $n_{j+1} < \Lambda$, we interpolate $F_r^\Lambda(x)$ continuously between $r = j$ and $r = j+1$ by setting
$N_r = N_j + (r-j) n_{j+1}$ and
\[
F^\Lambda_r(x) = \frac{1}{N_r^2} \sum_{i \in I_\Lambda \cap [1,j]} n_i^2.
\]
For a lower bound on the Hausdorff dimension of $\{ \underline{F} = \alpha, \overline{F} = \beta \}$, we wish to provide a mechanism that produces an abundance of points with this property. More precisely, we exhibit a subset of $\{ \underline{F} = \alpha, \overline{F} = \beta \}$ that permits a lower estimate for its the Hausdorff dimension via Lemma~\ref{LEM:dimH-density}.  The main idea is the following: Given $r \in \R$ with $F_r^\Lambda(x) = \beta$ we introduce blocks of length smaller than $\Lambda$ until we hit the level $F_{k-1}^{\Lambda}(x) = \alpha$ for some $k \in \N$. Since these blocks can be chosen arbitrarily we interpret them as degrees of freedom or ``undetermined positions".
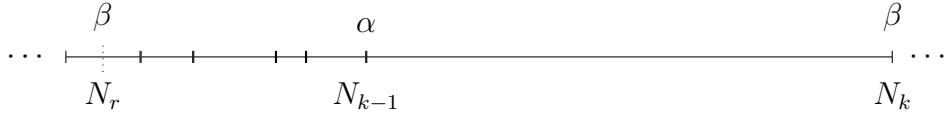
\begin{figure}
\begin{tikzpicture}
\path (0,0) --node{$\cdots$} (1,0);
\draw[|-|] (1,0) -- (2,0);
\draw[dotted] (1.5, -0.2) -- (1.5,0.2);
\draw (1.5,0) node[anchor=north, outer sep=0.2cm] {$N_r$};
\draw (1.5,0) node[anchor = south, outer sep = 0.2cm] {$\beta$};
\draw[|-|] (2,0) -- (2.7,0);
\draw[|-|] (2.7,0) -- (3.8,0);
\draw[|-|] (3.8,0) -- (4.2,0);
\draw[|-|] (4.2,0) -- (5,0);
\draw (5,0) node[anchor = north, outer sep=0.2cm] {$N_{k-1}$};
\draw (5,0) node[anchor = south, outer sep = 0.2cm] {$\alpha$};
\draw[|-|] (5,0) -- (12,0);
\draw (12,0) node[anchor = north, outer sep=0.2cm] {$N_{k}$};
\draw (12,0) node[anchor = south, outer sep = 0.2cm] {$\beta$};
\path (12,0) --node{$\cdots$} (13,0);
\end{tikzpicture}
\caption{Example for the alternation block decomposition of $x$, given that $F^\Lambda_r(x) = F^\Lambda_k(x) = \beta$ and $F^\Lambda_{k-1}(x) = \alpha$. All blocks between $N_r$ and $N_{k-1}$ have length below $\Lambda$. }
\label{FIG:degrees-of-freedom}
\end{figure}
We then add a single large block of size $n_k$ (the ``determined positions") that raises the level back to $F_k^{\Lambda}(x) = \beta$; compare Figure~\ref{FIG:degrees-of-freedom} for an illustration. 
The relative amount $f(\alpha,\beta)$ of undetermined positions turns out to be independent of the starting position $r$. Repeating this procedure, the lower density of undetermined positions equals $f(\alpha,\beta)$ over the whole sequence. This will yield the same value as a lower bound on the Hausdorff dimension of $\{ \underline{F} = \alpha, \overline{F} = \beta \}$. In Section~\ref{SEC:upper-bound}, we will prove that this strategy is indeed optimal, establishing $f(\alpha,\beta)$ also as an upper bound for the Hausdorff dimension.

\begin{lemma}
\label{LEM:f-beta-alpha-dof}
Let $j,k \in \N$ with $j<k$ and assume that $n_i < \Lambda$ for all $j < i < k$ and $n_{k} \geqslant \Lambda$. Suppose that there is $j \leqslant r < j+1$ such that $F^\Lambda_r(x) = F^\Lambda_k(x) =:\beta$ and set $\alpha: = F^\Lambda_{k-1}(x)$. Then,
\[
\frac{N_{k-1} - N_r}{N_k - N_r} = f(\alpha,\beta),
\]
with $f(\alpha,\beta)$ as defined in \eqref{EQ:f-alpha-beta}.
\end{lemma}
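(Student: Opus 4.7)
The plan is a direct computation. Since no index strictly between $j$ and $k$ lies in $I_\Lambda$, the partial sum $S := \sum_{i \in I_\Lambda \cap [1,j]} n_i^2$ agrees with $\sum_{i \in I_\Lambda \cap [1,k-1]} n_i^2$, so by the interpolation rule for $F^\Lambda_r$ the two hypotheses $F^\Lambda_r(x) = \beta$ and $F^\Lambda_{k-1}(x) = \alpha$ read
\[
N_r^2 = S/\beta, \qquad N_{k-1}^2 = S/\alpha.
\]
Since $n_k \geqslant \Lambda$, one has $k \in I_\Lambda$, and the third hypothesis $F^\Lambda_k(x) = \beta$ becomes $\beta N_k^2 = S + n_k^2$ with $N_k = N_{k-1} + n_k$.

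Next I would set $t := N_k/N_{k-1}$, substitute $S = \alpha N_{k-1}^2$ into the equation $\beta(N_{k-1} + n_k)^2 = S + n_k^2$ and divide by $N_{k-1}^2$ to obtain the quadratic
\[
(1-\beta) t^2 - 2 t + (1+\alpha) = 0.
\]
Its two roots are $(1 \pm D)/(1-\beta)$ with $D := \sqrt{\alpha\beta + \beta - \alpha}$. Because $N_k > N_{k-1}$ forces $t > 1$, and because $\alpha \leqslant \beta$ is automatic (when $i$ runs from $j$ to $k-1$ the value $S$ is frozen while $N_i$ strictly increases, so $F^\Lambda_i = S/N_i^2$ decreases), one verifies that only the $+$-root satisfies $t > 1$, giving $t = (1+D)/(1-\beta)$.

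Finally, inserting $N_r = \sqrt{S/\beta}$ and $N_{k-1} = \sqrt{S/\alpha}$ and cancelling the common factor $\sqrt{S/(\alpha\beta)}$ yields
\[
\frac{N_{k-1} - N_r}{N_k - N_r} = \frac{\sqrt{\beta} - \sqrt{\alpha}}{t\sqrt{\beta} - \sqrt{\alpha}},
\]
and the claim reduces to showing that this equals $(D - \beta)/(D + \sqrt{\alpha\beta}) = f(\alpha,\beta)$. Cross-multiplying, and using $\beta\sqrt{\alpha} - \alpha\sqrt{\beta} = \sqrt{\alpha\beta}(\sqrt{\beta} - \sqrt{\alpha})$, the left-hand side of the desired identity becomes $(\sqrt{\beta}-\sqrt{\alpha})(D + \sqrt{\alpha\beta})$, while the right-hand side can be rewritten using the single algebraic relation
\[
(D - \beta)(1 + D) = (D - \alpha)(1 - \beta),
\]
which is immediate from $D^2 = \alpha\beta + \beta - \alpha$. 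Substituting this into $(D-\beta)(t\sqrt{\beta} - \sqrt{\alpha}) = (D-\beta)\bigl[(1+D)\sqrt{\beta} - (1-\beta)\sqrt{\alpha}\bigr]/(1-\beta)$ collapses the expression to the same $(\sqrt{\beta}-\sqrt{\alpha})(D + \sqrt{\alpha\beta})$, completing the verification. The only real obstacle is spotting the factorization $(D-\beta)(1+D) = (D-\alpha)(1-\beta)$; once it is in hand, the rest is bookkeeping around the definition of $F^\Lambda$ and the quadratic for $t$.
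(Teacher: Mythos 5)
Your proof is correct and follows essentially the same route as the paper's: a direct computation from the three relations $\beta N_r^2 = \alpha N_{k-1}^2 = S$ and $\beta N_k^2 = S + n_k^2$, followed by algebraic simplification to the closed form $f(\alpha,\beta)$. The only cosmetic difference is that you normalize by $N_{k-1}$ (solving a quadratic for $t=N_k/N_{k-1}$) where the paper normalizes by $N_r$ (solving for $\ell$ and $m$ separately); the verification via $(D-\beta)(1+D)=(D-\alpha)(1-\beta)$ checks out.
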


\begin{proof}
Let $N_{k-1} = (1+\ell) N_r$ and $N_k = (1+\ell+m) N_r$. Since $[j+1,k-1] \cap I_\Lambda = \varnothing$, we have
\[
N_{k-1}^2 F_{k-1}^\Lambda(x) = N_r^2 F_r^\Lambda(x),
\]
which translates to
\[
\alpha (1+\ell)^2 = \beta.
\]
Solving for $\ell$, we obtain
\[
\ell = \frac{\sqrt{\beta} - \sqrt{\alpha}}{\sqrt{\alpha}}.
\]
On the other hand, we have $n_k = m N_r$ by definition, yielding 
\[
N_k^2 F_k^\Lambda(x) = n_{k}^2 + \sum_{i \in I_\Lambda\cap[1,k-1]} n_i^2 = m^2 N_r^2 + N_r^2 F_r^\Lambda(x). 
\]
That is,
\[
 m^2 + \beta = \beta (1+\ell + m)^2 = \beta \biggl( \frac{\sqrt{\beta}}{\sqrt{\alpha}} + m \biggr)^2,
\]
which gives after a few steps of calculation,
\[
m = \frac{\sqrt{\beta}}{\sqrt{\alpha}} \frac{1}{1-\beta} \Bigl( \beta + \sqrt{\alpha \beta + \beta - \alpha} \Bigr).
\]
Finally, this implies
\begin{align*}
\frac{N_{k-1} - N_r}{N_k - N_r}
& = \frac{\ell}{\ell+m} 
= \frac{1}{1 + m/ \ell} 
 = \biggl( 1 + \frac{\sqrt{\beta}}{\sqrt{\beta} - \sqrt{\alpha}} \frac{1}{1 - \beta} \Bigl( \beta + \sqrt{\alpha\beta + \beta - \alpha} \Bigr) \biggr)^{-1}.
\end{align*}
A few formal manipulations show that this is precisely the expression given by $f(\alpha,\beta)$.
\end{proof}

In order to show that the strategy sketched before Lemma~\ref{LEM:f-beta-alpha-dof} is in a certain sense optimal, we move away from the assumption that there are only negligible blocks between $N_r$ and $N_{k-1}$. In this more general setting, we find the following analogue of Lemma~\ref{LEM:f-beta-alpha-dof} which will be useful in Section~\ref{SEC:upper-bound}.

\begin{lemma}
\label{LEM:f-dof-generalized}
Suppose $n_{k} \geqslant \Lambda$ for some $k \in \N$ and let $F^\Lambda_{k-1}(x) = \alpha < F^\Lambda_{k}(x) = \beta$. Then,
\[
\frac{N_{k-1} - \sqrt{\alpha/\beta} N_{k-1}}{N_k - \sqrt{\alpha/\beta} N_{k-1}} = f(\alpha,\beta).
\]
\end{lemma}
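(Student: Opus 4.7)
The plan is to reduce this statement to the setup of Lemma~\ref{LEM:f-beta-alpha-dof} by replacing the actual reference point $N_r$ of that lemma with a \emph{virtual} reference length $N^{*}$, chosen so that the sum $\sum_{i \in I_\Lambda \cap [1,k-1]} n_i^2$ is exactly accounted for by the prescribed level $\beta$. Concretely, I would set
\[
N^{*} := \sqrt{\alpha/\beta}\, N_{k-1},
\]
so that $\beta \, (N^{*})^2 = \alpha N_{k-1}^2 = \sum_{i \in I_\Lambda \cap [1,k-1]} n_i^2$. Note that $N^{*} < N_{k-1}$ since $\alpha < \beta$; this is the key observation that $N^{*}$ plays the role of the point $N_r$ from the previous lemma, even though no such $r$ literally exists in the current setting.

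Next, I would introduce the reparametrisations $N_{k-1} = (1+\ell)\, N^{*}$ and $N_k = (1+\ell+m)\, N^{*}$. By construction, the first equation together with $\beta (N^{*})^2 = \alpha N_{k-1}^2$ immediately forces $\alpha(1+\ell)^2 = \beta$, hence $\ell = (\sqrt{\beta} - \sqrt{\alpha})/\sqrt{\alpha}$, as in Lemma~\ref{LEM:f-beta-alpha-dof}. Also, because $n_k = N_k - N_{k-1}$, we get $n_k = m\, N^{*}$, and then
\[
\beta\, N_k^2 \;=\; N_k^2 F_k^\Lambda(x) \;=\; n_k^2 + \sum_{i \in I_\Lambda \cap [1,k-1]} n_i^2 \;=\; m^2 (N^{*})^2 + \beta (N^{*})^2,
\]
which simplifies to the same algebraic identity $m^2 + \beta = \beta(1+\ell+m)^2$ that appears in the proof of Lemma~\ref{LEM:f-beta-alpha-dof}. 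Solving for $m$ yields the identical expression
\[
m = \frac{\sqrt{\beta}}{\sqrt{\alpha}} \cdot \frac{1}{1-\beta}\Bigl(\beta + \sqrt{\alpha\beta + \beta - \alpha}\Bigr).
\]

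Finally, since $N_{k-1} - N^{*} = \ell \, N^{*}$ and $N_k - N^{*} = (\ell+m)\, N^{*}$, the ratio in the statement equals $\ell/(\ell+m)$, and the computation of $f(\alpha,\beta)$ is the same as at the end of the proof of Lemma~\ref{LEM:f-beta-alpha-dof}. Noting that $N^{*} = \sqrt{\alpha/\beta}\, N_{k-1}$ matches the form in the statement completes the proof. There is no real obstacle here: the only conceptual point is the choice of $N^{*}$; once that is identified, the algebra is literally the same as before, and the appearance of the ``small'' blocks $n_i < \Lambda$ between the large blocks of index in $I_\Lambda$ is invisible because only the weighted $F^\Lambda$--quantities enter.
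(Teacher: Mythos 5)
Your proposal is correct and is exactly the argument the paper intends: its own (one-line) proof sketch says to replace $N_r$ by $\sqrt{\alpha/\beta}\,N_{k-1}$ and rerun Lemma~\ref{LEM:f-beta-alpha-dof}, which is precisely your virtual reference length $N^{*}$ with $\beta (N^{*})^2 = \alpha N_{k-1}^2$. You have simply written out the details that the paper leaves implicit, and the algebra checks out.
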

\begin{proof}[Sketch of proof]
The proof of Lemma~\ref{LEM:f-beta-alpha-dof} carries over verbatim if we replace $N_r$ by the term $\sqrt{\alpha/\beta} N_{k-1}$ and use the identification $F^M_r(x) = F^M_k(x)$.
\end{proof}

We can now provide a lower estimate for the dimension of the set $\{ \underline{F} = \alpha, \overline{F} = \beta\}$. 
As in Section~\ref{SEC:intermediate}, we will make use of Lemma~\ref{LEM:dimH-density}, by fixing the values of the sequence $x = (x_n)_{n \in \N}$ on an appropriate subset of $\N$.
\begin{prop} \label{PROP:lower-bound}
Let $(\alpha,\beta) \in \Delta$. Then, $\dim_H \{ \underline{F} = \alpha, \overline{F} = \beta \} \geqslant f(\alpha,\beta)$.
\end{prop}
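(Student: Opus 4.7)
The strategy is exactly the one outlined in the paragraphs preceding Lemma~\ref{LEM:f-beta-alpha-dof}: for each large cutoff $\Lambda \in \N$, I construct a subset of $\{\underline{F}=\alpha,\overline{F}=\beta\}$ by fixing $x$ on a subset $M \subset \N$ of appropriate upper density, and then apply the mass distribution bound in Lemma~\ref{LEM:dimH-density} to conclude $\dim_H \{\underline{F}=\alpha,\overline{F}=\beta\} \geqslant 1 - \overline{D}(M)$.

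Assume first that $0 < \alpha \leqslant \beta < 1$. I build $x \in \mathbb{X}\setminus\mc D$ recursively by iterating cycles, each consisting of a small-block phase followed by one large block. At the start of cycle $i$, I am at an index $k_i$ with $F^\Lambda_{k_i}(x) \approx \beta$. I append blocks of $\tau(x)$ of length strictly less than $\Lambda$ by imposing $x_{n\Lambda}\neq x_{n\Lambda+1}$ at every relevant $n$; this guarantees every alternation block in the phase has length at most $\Lambda$ while leaving the remaining positions of the phase free. Since such blocks do not contribute to the numerator of $F^\Lambda_m$, the quantity $F^\Lambda_m(x)$ decreases monotonically as $N_m$ grows, and I stop the phase at the first index $k_{i+1}-1$ at which $F^\Lambda_m(x)$ drops to $\alpha$; the $O(1/N_m)$ step size ensures $F^\Lambda_{k_{i+1}-1}(x) = \alpha + o(1)$. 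Next, I place a single large block of integer length $n_{k_{i+1}} \geqslant \Lambda$ chosen to minimize $|F^\Lambda_{k_{i+1}}(x) - \beta|$, which gives $F^\Lambda_{k_{i+1}}(x) = \beta + o(1)$. Combined with Lemma~\ref{LEM:F-Delta-convergence} and the monotonicity within each phase, this ensures $\underline{F}(x) = \alpha$ and $\overline{F}(x) = \beta$.

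For the dimension bound, let $M$ consist of all large-block positions together with the forced positions inside small-block phases (which have density $O(1/\Lambda)$ within those phases). Lemma~\ref{LEM:f-beta-alpha-dof}, applied with $r$ chosen in $[k_i,k_i+1)$ so that the hypothesis $F^\Lambda_r(x) = F^\Lambda_{k_{i+1}}(x)$ holds exactly (introducing only an $o(1)$ error, since $|N_r - N_{k_i}| < \Lambda$ while the cycle length diverges), yields
\[
\frac{N_{k_{i+1}-1} - N_{k_i}}{N_{k_{i+1}} - N_{k_i}} = f(\alpha,\beta) + o(1).
\]
A direct cumulative estimate then shows that at every position $n$ the ratio $\#(M \cap [1,n])/n$ is bounded above by $1 - f(\alpha,\beta)(1 - C/\Lambda) + o(1)$ for some absolute constant $C$, so $\overline{D}(M) \leqslant 1 - f(\alpha,\beta)(1 - C/\Lambda)$. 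Lemma~\ref{LEM:dimH-density} then yields $\dim_H \{\underline{F}=\alpha,\overline{F}=\beta\} \geqslant f(\alpha,\beta)(1 - C/\Lambda)$, and letting $\Lambda \to \infty$ gives the claim.

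Boundary cases in $\Delta$ require only minor modifications. If $f(\alpha,\beta) = 0$ (as on $\{\alpha=\beta\}$ or $\{\beta=1\}$, by Proposition~\ref{PROP:f-properties}) the bound is trivial. For $\alpha = 0$ and $\beta \in (0,1)$ I run the same construction with stopping targets $\alpha_i \searrow 0$ over successive cycles, choosing cycle lengths that grow fast enough to make the Cesaro average of $1 - f(\alpha_i,\beta)$ converge to $1 - f(0,\beta)$, so that the upper density bound passes to the limit. Finally, for $(\alpha,\beta) = (0,0)$ the convention $f(0,0):=1$ is matched by Proposition~\ref{PROP:n-gamma-full}, which produces a set of full Hausdorff dimension on which $f_m(x) \sim \alpha' N_m^\gamma$ for some $\alpha' > 0$ and $\gamma \in (1,2)$, hence $F_m(x) \to 0$, so it is contained in $\{\underline{F}=\overline{F}=0\}$. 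The main technical obstacle is verifying that the cumulative density estimate holds uniformly across all positions (not only the cycle boundaries) and adapting it to the $\alpha = 0$ case where cycle lengths must be carefully tuned to mitigate early transients.
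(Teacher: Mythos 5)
Your construction is correct and is essentially the paper's own proof: the paper simply makes your adaptive cycles explicit by taking the cycle boundaries to be the geometric sequence $\theta_k=(1+\ell+m)^k\theta_0$, with $\ell,m$ the constants computed in Lemma~\ref{LEM:f-beta-alpha-dof}, then verifies $\underline{F}=\alpha$, $\overline{F}=\beta$ and $\overline{D}(M_\Lambda)=1-f(\alpha,\beta)$ by summing geometric series and concludes via Lemma~\ref{LEM:dimH-density}, handling the boundary cases the same way you do. The only point worth stating explicitly is that your stopping rule produces a set of determined positions independent of the free choices (because $F^\Lambda_m$ depends only on $N_m$ during a small-block phase), which is needed for Lemma~\ref{LEM:dimH-density} to apply.
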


\begin{proof}
For $\beta =0$, note that whenever the size of alternation blocks in $\tau(x)$ is uniformly bounded, it follows that $\overline{F}(x) = 0$. Since the union of all such elements $x$ has full Hausdorff dimension, the claim holds in this particular case. Likewise, the claim is trivial if $\beta = 1$ or $\alpha = \beta \neq 0$ because this implies $f(\alpha,\beta) = 0$. We can hence assume $\alpha < \beta < 1$ in the following.
For simplicity, we further restrict to the case that $\alpha > 0$. The case $\alpha = 0$ can be treated by replacing $\alpha$ with a sequence $\alpha_k \to 0$ in the argument below.

We follow the ideas outlined before Lemma~\ref{LEM:f-beta-alpha-dof}, using some of the notation introduced in its proof.
For $\Lambda \in \N$, we specify a set of positions, given by
	\[
	M_\Lambda := \bigcup_{k \geqslant 0}\{n \in \N: \ (1+\ell)\theta_k \leq n \leq \theta_{k+1} \},
	\]
	where $\theta_k = (1+\ell+m)^k \theta_0$ for all $k\in \N_0$, $\ell = \frac{\sqrt{\beta} - \sqrt{\alpha}}{\sqrt{\alpha}}$, $m=\frac{\sqrt{\beta}}{\sqrt{\alpha}} \frac{1}{1-\beta}(\beta + \sqrt{\alpha \beta + \beta-\alpha})$ and $\theta_0 \in \N$ is a value with $m\theta_0 > \Lambda + 2$. 
We recall from the proof of Lemma~\ref{LEM:f-beta-alpha-dof} that
	\begin{equation} \label{EQ:l-m-alpha-relations}
	m^2 + \beta = \beta (1+\ell + m)^2,
	\quad \frac{\ell}{m+\ell} = f(\alpha,\beta).
	\end{equation}
The set $M$ will denote those positions where the binary expansion of $x$ is assumed to have a (large) constant block. We hence define
	\[
	Q_\Lambda := \{x \in \mathbb{X} \, : \, x_n = 0 \; \mbox{ for all } n \in M_\Lambda \}. 
	\]
To avoid contributions that come from the complement of $M_\Lambda$, we introduce the set
	\[
	R_\Lambda := \{ x \in \mathbb{X}\, :\, x_1 =0, \, x_{n} x_{n+1} = 10 \; \mbox{ for all } n \in \Lambda \N \setminus M_\Lambda \}
	\]
Combining both conditions, it is natural to define
	\[
	A_{\Lambda} := Q_\Lambda \cap R_\Lambda.
	\]
	
First, we will show that $A_{\Lambda} \subset \{ \underline{F} = \alpha, \overline{F} = \beta \}$ such that it suffices to bound the Hausdorff dimension of $A_{\Lambda}$ from below. 
Let $x\in A_{\Lambda}$ with alternation coding $\tau(x) = (n_i)_{i \in \N}$. Since the expansion of $x$ is constant on $[(1+\ell)\theta_k, \theta_{k+1}]$, there exists a corresponding index $i_k$ such that 
$
N_{i_k-1} \leqslant \lceil (1+\ell)\theta_k \rceil \leqslant \lfloor \theta_{k+1} \rfloor \leqslant N_{i_k}.
$
In particular,
\[
n_{i_k} \geqslant\theta_{k+1} - (1+\ell) \theta_k - 2 =  m \theta_k - 2,
\]
and by the assumption on $\theta_0$ this also implies  $n_{i_k} > \Lambda$.
On the other hand, the restriction via $R_{\Lambda}$ ensures that $n_{i_k}$ cannot be much larger. More precisely, we have $n_{i_k} \leqslant m \theta_k + 2\Lambda$ and hence
\[
n_{i_k} = m \theta_k + O(1).
\]
for every $k \in \N$. Note that for all other indices $i \in \N$ the defining condition for $R_\Lambda$ also enforces $n_i < \Lambda$. Hence, we have $I_\Lambda = \{i_k : k \in \N_0 \}$ and obtain
\begin{equation}
\label{EQ:F-theta-relation}
F_j^{\Lambda}(x) = \frac{1}{N_j^2} \sum_{i_k \leqslant j} m^2 \theta_k^2 + o(1).
\end{equation}
Clearly, this sequence attains its $\limsup$ along the subsequence with $j = i_k$ and $k \in \N$. Since $N_{i_k} \sim \theta_{k+1}$, we obtain
\[
\overline{F}(x) = \limsup_{k \to \infty} \frac{1}{\theta_{k+1}^2} \sum_{j=0}^{k} m^2 \theta_j^2
= m^2 \sum_{j=1}^\infty \frac{1}{(1+\ell+m)^{2j}}
= \frac{m^2}{(1+\ell+m)^2 - 1} = \beta,
\]
using the first identity from \eqref{EQ:l-m-alpha-relations} in the last step.
On the other hand, \eqref{EQ:F-theta-relation} implies that the $\liminf$ for $F_j(x)$ is obtained along the subsequence with $j = i_k -1$ and $k \in \N$. Since $N_{i_k - 1} \sim (1+\ell) \theta_k$, we get by a similar calculation as before
\[
\underline{F}(x) = \liminf_{k \to \infty} \frac{1}{(1+\ell)^2 \theta_k^2} \sum_{j=0}^{k-1} m^2 \theta_j^2 
= \frac{\beta}{(1+\ell)^2} = \alpha,
\]
using the definition of $\ell$ in the last step. This completes the proof for the statement that $A_{\Lambda} \subset \{ \underline{F} = \alpha, \overline{F} = \beta \}$. 

In view of Lemma~\ref{LEM:dimH-density}, one has to compute the upper density of $M_\Lambda$ in order to acquire a lower bound for the Hausdorff dimension of $A_{\Lambda}$. 
	Since the elements of $M_\Lambda$ are accumulated to the left of the positions $\theta_k$, we have that,
\begin{align*}
\overline{D}(M_\Lambda) 
& = \limsup_{k \to \infty} \frac{1}{\theta_k} \# (M_\Lambda \cap [1,\theta_k]) = \limsup_{k \to \infty} \frac{1}{\theta_k} \sum_{j=0}^{k-1} m \theta_j
= m \sum_{j=1}^{\infty} \frac{1}{(1 + \ell +m)^j}
\\ & = \frac{m}{\ell + m} = 1- f(\alpha,\beta),
\end{align*}
where we have used the second identity from  \eqref{EQ:l-m-alpha-relations} in the last step. Since the points in $A_{\Lambda}$ are determined precisely for the positions in $M_\Lambda \cup \Lambda \N \cup (\Lambda \N + 1)$, we get by Lemma~\ref{LEM:dimH-density},
\[
		\dim_H A_{\Lambda}
		\geqslant 
		1 - \overline{D} \bigl(M_\Lambda  \cup \Lambda \N \cup (\Lambda \N + 1) \bigr) 
		\geqslant
		f(\alpha,\beta) - \frac{2}{\Lambda}
		\xrightarrow{\Lambda \to \infty}
		f(\alpha,\beta).
\]
Since $\dim_H\{ \underline{F} = \alpha, \overline{F} = \beta \} \geqslant \dim_H A_{\Lambda}$, the proof is complete.
\end{proof}

\begin{coro}
\label{COR:lower-main}
Let $S \subset \Delta$. Then, $\dim_H \{ (\underline{F},\overline{F}) \in S \} \geqslant \sup_S f(\alpha,\beta)$.
\end{coro}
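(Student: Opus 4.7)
The plan is to deduce this from Proposition~\ref{PROP:lower-bound} via nothing more than monotonicity of Hausdorff dimension together with the fact that taking a supremum commutes with the monotone bound. The key observation is that for any fixed point $(\alpha,\beta) \in S$, the level set $\{\underline{F} = \alpha,\overline{F}=\beta\}$ is, by definition, a subset of $\{(\underline{F},\overline{F}) \in S\}$, so applying monotonicity of $\dim_H$ gives a pointwise lower bound of $f(\alpha,\beta)$ for every $(\alpha,\beta)\in S$.

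First I would fix an arbitrary $(\alpha,\beta) \in S$ and note that
\[
\{\underline{F} = \alpha,\overline{F}=\beta\} \subset \{(\underline{F},\overline{F}) \in S\},
\]
so by monotonicity of Hausdorff dimension combined with Proposition~\ref{PROP:lower-bound},
\[
\dim_H\{(\underline{F},\overline{F}) \in S\} \geqslant \dim_H\{\underline{F} = \alpha,\overline{F}=\beta\} \geqslant f(\alpha,\beta).
\]
Since the left-hand side does not depend on the choice of $(\alpha,\beta)$, I would then take the supremum over all $(\alpha,\beta) \in S$ on the right, which yields the claimed inequality.

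There is essentially no obstacle here, the entire content was already packed into Proposition~\ref{PROP:lower-bound}. The only very mild subtlety is the case $S = \varnothing$, in which the supremum is $-\infty$ (or conventionally $0$) and both sides agree trivially, as well as potential edge cases where $(\alpha,\beta) = (0,0) \in S$, for which the definition $f(0,0) := 1$ is consistent with Proposition~\ref{PROP:lower-bound} since any $x$ whose alternation blocks are uniformly bounded satisfies $\underline{F}(x) = \overline{F}(x) = 0$ and the set of such $x$ has full Hausdorff dimension $1$, as noted in the proof of Proposition~\ref{PROP:lower-bound}.
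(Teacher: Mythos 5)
Your argument is correct and is exactly the paper's own proof: the inclusion $\{\underline{F}=\alpha,\overline{F}=\beta\}\subset\{(\underline{F},\overline{F})\in S\}$, monotonicity of Hausdorff dimension, Proposition~\ref{PROP:lower-bound}, and then the supremum over $S$. The extra remarks on $S=\varnothing$ and on $(0,0)\in S$ are harmless and consistent with the conventions in the paper.
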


\begin{proof}
For $(\alpha, \beta) \in S$, we have $\{ (\underline{F},\overline{F}) \in S \} \supset \{ \underline{F} = \alpha, \overline{F}=\beta \}$ and we hence obtain $\dim_H\{ (\underline{F},\overline{F}) \in S \} \geqslant f(\alpha,\beta)$, due to Proposition~\ref{PROP:lower-bound}. Taking the supremum over $S$ yields the assertion.
\end{proof}


\section{Upper bounds}
\label{SEC:upper-bound}

We proceed by establishing an upper bound for the Hausdorff dimension of the set $\{ \underline{F} = \alpha, \overline{F} = \beta \}$. 
This is somewhat more involved than proving the lower bound because we now have to account for \emph{all} mechanisms that lead to this particular range of accumulation points.

Let us fix $x \in \mathbb{X} \setminus \mc D$ with $\tau(x) = (n_i)_{i \in \N}$ and $\Lambda \in \N$ as in the last section.
For every $k \in \N$, we define
\[
\ell_k = \ell_k(x,\Lambda)  = 
\frac{\sum_{i \in I_\Lambda \cap [1,k]} n_i}{N_k}.
\]
This corresponds to the relative density of positions of $x$ (in the region $[1,N_k]$) that are occupied with large blocks.
Naturally, if we enlarge $[1,N_k]$ by an interval that does not contain elements of $I_\Lambda$, this density decays. It will be useful to cancel this effect in an appropriate way. To that end, we define a sequence $(\varrho_k)_{k \in \N}$, implicitly dependent on $(x,\Lambda)$, via
\[
\varrho_k = \frac{\ell_k}{\sqrt{F^\Lambda_k(x)}},
\]
which we may interpret as a renormalized block density.
Indeed, one easily verifies that whenever $n_i < \Lambda$ for all $j < i 
\leqslant k$, it follows that $\varrho_j = \varrho_k$.

In the following, let
\[
\eta(\alpha,\beta) := \frac{1}{\sqrt{\beta}} (1 - f(\alpha,\beta)).
\]
In the situation of Lemma~\ref{LEM:f-beta-alpha-dof}, this may be interpreted as the relative size of the single large block $n_k$ in the region between $N_r$ and $N_k$, normalized by $\sqrt{\beta}$. 
The similarity of this interpretation with the definition of $\varrho_k$ provides some intuition for the following result.

\begin{lemma}
Whenever $n_k \geqslant \Lambda$ and $F^{\Lambda}_{k-1}(x) = \alpha < F^{\Lambda}_{k}(x) = \beta$, we can write $\varrho_k$ as the convex combination,
\[
\varrho_k = p_k \varrho_{k-1} + (1-p_k) \eta(\alpha,\beta),
\]
where
\[
p_k = \sqrt{\frac{\alpha}{\beta}} \frac{N_{k-1}}{N_k}.
\]
In particular, $p_k \leqslant N_{k-1}/N_k$.
\end{lemma}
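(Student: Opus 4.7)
The plan is to reduce the identity to a direct algebraic manipulation using Lemma~\ref{LEM:f-dof-generalized}, which already encodes the geometry of the $N_{k-1} \to N_k$ transition. The starting point is the recursive decomposition of $\ell_k$: since $n_k \geqslant \Lambda$ implies $k \in I_\Lambda$, we have
\[
N_k \ell_k = \sum_{i \in I_\Lambda \cap [1,k]} n_i = N_{k-1}\ell_{k-1} + n_k,
\]
and inserting the definitions $\varrho_{k-1} = \ell_{k-1}/\sqrt{\alpha}$ and $\varrho_k = \ell_k/\sqrt{\beta}$ (using $F^\Lambda_{k-1}(x) = \alpha$ and $F^\Lambda_k(x) = \beta$) gives
\[
\varrho_k \;=\; \sqrt{\tfrac{\alpha}{\beta}} \, \tfrac{N_{k-1}}{N_k} \, \varrho_{k-1} + \tfrac{n_k}{N_k \sqrt{\beta}} \;=\; p_k \varrho_{k-1} + \tfrac{n_k}{N_k \sqrt{\beta}}.
\]

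It remains to identify the second summand with $(1-p_k)\eta(\alpha,\beta)$. Writing out $1-p_k = (N_k - \sqrt{\alpha/\beta}\,N_{k-1})/N_k$ and using $N_k - N_{k-1} = n_k$, Lemma~\ref{LEM:f-dof-generalized} rearranges to
\[
1 - f(\alpha,\beta) \;=\; \frac{N_k - N_{k-1}}{N_k - \sqrt{\alpha/\beta}\,N_{k-1}} \;=\; \frac{n_k}{N_k - \sqrt{\alpha/\beta}\,N_{k-1}}.
\]
Multiplying this by $(1-p_k)/\sqrt{\beta}$ and cancelling the common factor $N_k - \sqrt{\alpha/\beta}\,N_{k-1}$ immediately yields $(1-p_k)\eta(\alpha,\beta) = n_k/(N_k\sqrt{\beta})$, completing the representation of $\varrho_k$.

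Finally, the bound $p_k \leqslant N_{k-1}/N_k$ is an instant consequence of $\alpha < \beta$, which ensures $\sqrt{\alpha/\beta} \leqslant 1$. This in particular shows $p_k \in [0,1]$, so the displayed expression really is a convex combination. No step presents a genuine obstacle; the only mild subtlety is matching the algebraic form of Lemma~\ref{LEM:f-dof-generalized} to the ratio appearing in $1-p_k$, which is exactly what motivated the normalisation $\varrho_k = \ell_k/\sqrt{F_k^\Lambda(x)}$ and the definition of $\eta(\alpha,\beta)$ in the first place.
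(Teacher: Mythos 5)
Your proposal is correct and follows essentially the same route as the paper: decompose $\ell_k$ recursively, renormalise by $\sqrt{\beta}$ (using $\varrho_{k-1}=\ell_{k-1}/\sqrt{\alpha}$) to produce $p_k\varrho_{k-1}$, and then identify the remainder $n_k/(N_k\sqrt{\beta})$ with $(1-p_k)\eta(\alpha,\beta)$ via the rearranged form $1-f(\alpha,\beta)=(N_k-N_{k-1})/(N_k-\sqrt{\alpha/\beta}\,N_{k-1})$ of Lemma~\ref{LEM:f-dof-generalized}. The concluding observation that $\alpha<\beta$ gives $p_k\leqslant N_{k-1}/N_k$ also matches the paper.
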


\begin{proof}
First, we write $\ell_k$ as a convex combination via
\[
\ell_k = \frac{1}{N_k} \biggl( \sum_{i \in I_\Lambda \cap [1,k-1]} n_i + n_k \biggr) = 
\frac{N_{k-1}}{N_k} \ell_{k-1} + \frac{n_k}{N_k}. 
\]
Dividing this relation by $\sqrt{\beta}$ yields
\[
\varrho_k = \sqrt{\frac{\alpha}{\beta}} \frac{N_{k-1}}{N_k} \varrho_{k-1} + \frac{1}{\sqrt{\beta}} \frac{n_k}{N_k}.
\]
Using $n_k = N_k - N_{k-1}$, the last summand may be rewritten as
\[
\frac{1}{\sqrt{\beta}} \frac{n_k}{N_k} =
\frac{1}{\sqrt{\beta}} (1-p_k) \frac{N_k - N_{k-1}}{N_k - \sqrt{\alpha/\beta} N_{k-1}} 
= (1-p_k) \frac{1}{\sqrt{\beta}} (1 - f(\alpha,\beta)),
\]
using Lemma~\ref{LEM:f-dof-generalized} in the last step. By the definition of $\eta(\alpha,\beta)$, this is precisely the claimed expression.
\end{proof}



\begin{lemma}
\label{LEM:H-monotonicity}
The function $\eta \colon \Delta \setminus \{(0,0)\} \to [0,1]$, with
\[
\eta(\alpha,\beta) = \frac{1}{\sqrt{\beta}}(1 - f(\alpha,\beta))
\]
is continuous on its domain. It is increasing in $\alpha$ and decreasing in $\beta$.
In particular,
\[
\inf \{ \eta(\gamma, \delta) : \alpha\leqslant \gamma \leqslant \delta \leqslant \beta \} = \eta(\alpha,\beta),
\]
for all $(\alpha,\beta) \in \Delta \setminus \{(0,0)\}$.
\end{lemma}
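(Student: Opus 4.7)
\emph{Plan.} The first step will be to rewrite $\eta$ in a more tractable form. From the simplification
\[
1 - f(\alpha,\beta) = \frac{\sqrt{\alpha\beta}+\beta}{\sqrt{\alpha\beta+\beta-\alpha}+\sqrt{\alpha\beta}} = \frac{\sqrt{\beta}\,(\sqrt{\alpha}+\sqrt{\beta})}{\sqrt{\alpha\beta+\beta-\alpha}+\sqrt{\alpha\beta}}
\]
one obtains
\[
\eta(\alpha,\beta) = \frac{\sqrt{\alpha}+\sqrt{\beta}}{\sqrt{\alpha\beta}+\sqrt{\alpha\beta+\beta-\alpha}}.
\]
I would then check that the radicand is nonnegative on $\Delta$ via $\alpha\beta+\beta-\alpha = \beta(1+\alpha)-\alpha \geqslant \alpha(1+\alpha)-\alpha = \alpha^2$ (using $\beta \geqslant \alpha$) and that the denominator vanishes only at $(0,0)$. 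Continuity on $\Delta \setminus \{(0,0)\}$ then follows as a ratio of continuous functions.

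For the monotonicity I would substitute $u = \sqrt{\alpha}$, $v = \sqrt{\beta}$ and $W = \sqrt{u^2v^2+v^2-u^2}$, noting $W>0$ on $\Delta \setminus \{(0,0)\}$, so that $\eta = (u+v)/(uv+W)$. Differentiating and clearing denominators (using $W\,\partial_u W = u(v^2-1)$), the sign of $\partial_u \eta$ reduces to that of $v(v+u(1-v^2)) - v^2 W$. Since $v+u(1-v^2) \geqslant 0$ on $\Delta$, squaring is legitimate, and the key algebraic identity
\[
(v + u(1-v^2))^2 - v^2 W^2 = (1 - v^2)(u+v)^2 \geqslant 0
\]
will settle the sign, showing $\eta$ is nondecreasing in $\alpha$. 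The analogous computation for $\partial_v \eta$, using $W\,\partial_v W = v(u^2+1)$, produces after clearing $W$ a numerator equal to $-u\bigl(u + v(u^2+1)\bigr) - u^2 W$, which is manifestly $\leqslant 0$ with no squaring needed, so $\eta$ is nonincreasing in $\beta$.

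With both monotonicity properties in hand, the infimum assertion is immediate: for any $(\gamma, \delta)$ with $\alpha \leqslant \gamma \leqslant \delta \leqslant \beta$, the chain
\[
\eta(\gamma,\delta) \geqslant \eta(\alpha,\delta) \geqslant \eta(\alpha,\beta)
\]
holds, and the bound is attained at $(\gamma,\delta) = (\alpha,\beta)$. The only real obstacle in the argument is the algebraic verification that $(v+u(1-v^2))^2 - v^2W^2$ collapses to the clean factored form $(1-v^2)(u+v)^2$; everything else is a routine expansion plus one application of the quotient rule.
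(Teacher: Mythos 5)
Your proposal is correct and takes essentially the same route as the paper: both first reduce $\eta$ to the closed form $(\sqrt{\alpha}+\sqrt{\beta})/(\sqrt{\alpha\beta+\beta-\alpha}+\sqrt{\alpha\beta})$ and then verify the monotonicity by differentiating that expression. Your key identity $(v+u(1-v^2))^2 - v^2W^2 = (1-v^2)(u+v)^2$ and the sign computation for $\partial_v\eta$ both check out, so your write-up is in fact a more detailed version of the paper's sketched verification.
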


\begin{proof}[Sketch of proof]
A short calculation shows that
\[
\eta(\alpha,\beta) = \frac{\sqrt{\beta} + \sqrt{\alpha}}{\sqrt{\alpha \beta + \beta - \alpha} + \sqrt{\alpha \beta}},
\]
which can be checked to have the required properties.
\end{proof}

\begin{prop}
\label{PROP:b_j-lower-bound}
Assume that 
$\underline{F}(x) = \alpha$ and $\overline{F}(x) = \beta$ for some $(\alpha,\beta) \in \Delta \setminus \{(0,0)\}$. 
 Then,
\[
\liminf_{k \to \infty} \varrho_k \geqslant \eta(\alpha,\beta).
\]
\end{prop}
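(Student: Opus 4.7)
The plan proceeds by a limit argument driven by the convex combination formula from the preceding lemma, together with Lemma~\ref{LEM:H-monotonicity}. Since $\varrho$ is constant on every run of consecutive indices lying outside $I_\Lambda$, we have $\liminf_k \varrho_k = \liminf_m \varrho_{j_m}$, where $j_1 < j_2 < \cdots$ enumerate $I_\Lambda$. If this liminf is infinite there is nothing to prove, so I may assume $s^* := \liminf_k \varrho_k < \infty$ and choose a subsequence $k_n \in I_\Lambda$ with $\varrho_{k_n} \to s^*$. Passing to further subsequences, I would arrange that $F^\Lambda_{k_n-1} \to \alpha'$, $F^\Lambda_{k_n} \to \gamma$, and $N_{k_n-1}/N_{k_n} \to \rho$ for some limit values $(\alpha',\gamma) \in [\alpha,\beta]^2$ and $\rho \in [0,1]$; the containment uses Lemma~\ref{LEM:F-Delta-convergence} together with the hypotheses $\underline F(x) = \alpha$ and $\overline F(x) = \beta$.

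The principal case is the ``up-jump'' regime $\alpha' < \gamma$, where the preceding lemma applies for all large $n$ and yields
\[
\varrho_{k_n} = p_{k_n}\, \varrho_{k_n-1} + (1-p_{k_n})\, \eta\bigl(F^\Lambda_{k_n-1}, F^\Lambda_{k_n}\bigr).
\]
Using $\varrho_{k_n-1} \geqslant s^* - \varepsilon$ for all large $n$ (from the definition of liminf), the continuity of $\eta$ at $(\alpha',\gamma)$, and letting $n \to \infty$ followed by $\varepsilon \to 0$, I obtain $s^* \geqslant p\, s^* + (1-p)\, \eta(\alpha',\gamma)$ with $p = \sqrt{\alpha'/\gamma}\, \rho < 1$ (the strict inequality follows from $\alpha' < \gamma$). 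Rearranging yields $s^* \geqslant \eta(\alpha',\gamma)$, and by Lemma~\ref{LEM:H-monotonicity}, $\eta$ attains its minimum on $\{(u,v) \in \Delta : \alpha \leqslant u \leqslant v \leqslant \beta\}$ at $(\alpha,\beta)$, so $s^* \geqslant \eta(\alpha,\beta)$, as required.

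The remaining diagonal case $\alpha' = \gamma$ and the down-jump case $\alpha' > \gamma$ I would dispatch via the direct form of the recursion at a big block, $\varrho_k = p_k \varrho_{k-1} + \sqrt{1-p_k^2}$, which is algebraically equivalent to the convex combination but does not require $F^\Lambda_{k-1} < F^\Lambda_k$. Here the single-block identity $n_k^2 = F^\Lambda_k N_k^2 - F^\Lambda_{k-1} N_{k-1}^2$ together with $n_k = N_k - N_{k-1}$ pins down $\rho$ as a root of a quadratic in $N_{k-1}/N_k$; on the diagonal this forces $\rho \to (1-\alpha')/(1+\alpha')$, giving $s^* \geqslant 1/\sqrt{\alpha'} \geqslant 1/\sqrt{\beta} = \eta(\beta,\beta) \geqslant \eta(\alpha,\beta)$ again by Lemma~\ref{LEM:H-monotonicity}, while down-jumps correspond to the larger root of the quadratic and yield an even stronger bound on $s^*$. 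The hard part of the proof is not any single computation but the bookkeeping of the subsequence extractions and the exclusion of pathological limit points such as $(\alpha',\gamma) = (0,0)$ when $\beta > 0$---handled by re-selecting the subsequence so that $F^\Lambda_{k_n} \to \beta$; once the extraction is in place, the monotonicity content of Lemma~\ref{LEM:H-monotonicity} does the substantive work.
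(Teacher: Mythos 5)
Your route differs from the paper's: you extract a subsequence along which $\varrho_{k_n}$ attains the liminf and analyse the single recursion step at $k_n$, whereas the paper runs a global argument (once $\varrho_k\geqslant\eta(\alpha_\varepsilon,\beta_\varepsilon)$ it stays there; if it never gets there it is non\-decreasing and contracts geometrically toward $\eta(\alpha_\varepsilon,\beta_\varepsilon)$ along the infinitely many up\-jumps with $F^\Lambda_k>\beta/2$, where $N_{k-1}/N_k\leqslant(1-\beta/2)/(1+\beta/2)$ is uniformly bounded away from $1$). Your up\-jump and down\-jump cases are sound, and the identity $\varrho_k=p_k\varrho_{k-1}+\sqrt{1-p_k^2}$ is correct and a nice uniform reformulation. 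But the diagonal case contains a genuine gap. The limiting relation $(1-\rho)^2=\alpha'(1-\rho^2)$ does \emph{not} force $\rho=(1-\alpha')/(1+\alpha')$: $\rho=1$ is also a root, and it is realizable --- take $n_{k_n}=\Lambda$ while $N_{k_n}\to\infty$, so the large blocks along your subsequence are negligible relative to $N_{k_n}$. In that regime $p_{k_n}\to 1$, $\sqrt{1-p_{k_n}^2}\to 0$, and your one\-step inequality degenerates to $s^*\geqslant s^*$, giving no lower bound at all. This is exactly the scenario in which $\varrho_k$ could in principle drift down to $s^*$ ``adiabatically'' over many uninformative steps; ruling it out requires chaining the recursion over all steps (tracking that $\varrho$ only decreases at up\-jumps, and there only toward $\eta(\gamma,\delta)\geqslant\eta(\alpha_\varepsilon,\beta_\varepsilon)$), which is the substance of the paper's proof and is absent from yours.

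A second, smaller defect: your proposed fix for the pathological limit $(\alpha',\gamma)=(0,0)$ --- ``re\-selecting the subsequence so that $F^\Lambda_{k_n}\to\beta$'' --- is not available, because the subsequence is already constrained by $\varrho_{k_n}\to s^*$ and you cannot simultaneously prescribe the limit of $F^\Lambda_{k_n}$. (In fact $(\alpha',\gamma)=(0,0)$ forces $\rho=1$, so this sub\-case is subsumed by the degenerate diagonal case above rather than being separately repairable.) The up\-jump computation, the appeal to Lemma~\ref{LEM:H-monotonicity} to pass from $\eta(\alpha',\gamma)$ to $\eta(\alpha,\beta)$, and the down\-jump bound $s^*\geqslant(1-\rho)/(\sqrt{\gamma}-\sqrt{\alpha'}\rho)\geqslant 1/\sqrt{\gamma}$ can all be kept, but the diagonal case needs the multi\-step monotonicity\-and\-contraction argument to be a proof.
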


\begin{proof}
By Lemma~\ref{LEM:F-Delta-convergence}, we have $\liminf_{m \to \infty} F_m^\Lambda(x) = \alpha$ and $\limsup_{m \to \infty} F_m^\Lambda(x) = \beta$. Given $\varepsilon > 0$ let us define $\beta_{\varepsilon} = \min\{1,\beta + \varepsilon\}$ and $\alpha_{\varepsilon} = \max\{0,\alpha- \varepsilon\}$. By assumption, we have $F_k^\Lambda(x) \in (\alpha_{\varepsilon},\beta_{\varepsilon})$ for large enough $k \in \N$. 
For such $k$ and $\gamma = F^\Lambda_{k-1}(x)$, $\delta = F^\Lambda_k(x)$, we distinguish three cases
\begin{enumerate}
\item If $n_k < \Lambda$, we have $\varrho_k = \varrho_{k-1}$.
\item If $n_k \geqslant \Lambda$ but $\gamma \geqslant \delta$, we get $\varrho_k > \varrho_{k-1}$ (by straightforward calculation).
\item If $n_k \geqslant \Lambda$ and $\gamma < \delta$, we have $\varrho_k = p_k \varrho_{k-1} + (1-p_k)\eta(\gamma,\delta)$ and $p_k \leqslant N_{k-1}/N_k$.
\end{enumerate}
Due to Lemma~\ref{LEM:H-monotonicity}, we have $\eta(\gamma,\delta) \geqslant \eta(\alpha_{\varepsilon},\beta_{\varepsilon})$ if $\gamma< \delta$. Going through all possible cases we thereby find that $\varrho_{k-1} \geqslant \eta(\alpha_{\varepsilon},\beta_{\varepsilon})$ also implies $\varrho_k \geqslant \eta(\alpha_{\varepsilon},\beta_{\varepsilon})$. By the continuity of $\eta$, the claim follows as soon as $\varrho_{k-1} \geqslant \eta(\alpha_{\varepsilon},\beta_{\varepsilon})$ for some $k$. 
Let us therefore assume that there is some $k_0 \in \N$ with $\varrho_k <\eta(\alpha_{\varepsilon},\beta_{\varepsilon})$ for all $k \geqslant k_0$.
By assumption, there are several accumulation points of the sequence $(F_k^{\Lambda}(x))_{k \in \N}$ and hence the third case needs to occur infinitely often. In each such case note that
\[
\varrho_k \geqslant p_k \varrho_{k-1} + (1- p_k)\eta(\alpha_{\varepsilon},\beta_{\varepsilon})
\]
and thereby
\[
\varrho_k -\eta(\alpha_{\varepsilon},\beta_{\varepsilon}) \geqslant p_k(\varrho_{k-1} -\eta(\alpha_{\varepsilon},\beta_{\varepsilon})).
\]
Since we have assumed that $\varrho_k$ remains below $\eta(\beta_{\varepsilon},\alpha_\varepsilon)$, this means that
\[
|\varrho_k - \eta(\alpha_{\varepsilon},\beta_{\varepsilon})| \leqslant p_k |\varrho_{k-1} - \eta(\alpha_{\varepsilon},\beta_{\varepsilon}) |.
\]
Note that $\gamma = F_{k-1}^\Lambda(x) < F_k^\Lambda(x) = \delta$ requires that $p_k \leqslant N_{k-1}/N_k$ is bounded above by some constant $c(\delta) < 1$, compare the proof of Lemma~\ref{LEM:h-mu-f-relation}. Restricting to those $k$ such that $\delta > \beta/2 > 0$, we can further assume that there is a uniform $p < 1$ with $c(\delta) < p$ and hence
\begin{equation}
\label{EQ:rho_k-to-H-decay}
|\varrho_k - \eta(\alpha_{\varepsilon},\beta_{\varepsilon})| \leqslant p |\varrho_{k-1} -\eta(\alpha_{\varepsilon},\beta_{\varepsilon}) |.
\end{equation}
Since $\varrho_k$ is non-decreasing we have overall that the distance of $\varrho_k$ to $\eta(\alpha_{\varepsilon},\beta_{\varepsilon})$ is non-increasing and exponentially decaying on a subsequence due to \eqref{EQ:rho_k-to-H-decay}. It thereby follows that $\lim_{k \to \infty} \varrho_k = \eta(\alpha_{\varepsilon},\beta_{\varepsilon})$. Hence, we have in every case
\[
\liminf_{k \to \infty} \varrho_k \geqslant \eta(\alpha_{\varepsilon},\beta_{\varepsilon}) \xrightarrow{\varepsilon \to 0} \eta(\alpha,\beta),
\]
which finishes the proof.
\end{proof}

For every $x$, let the upper density of large blocks be given by
\[
D_\Lambda(x) := \limsup_{m \to \infty} \frac{1}{N_m} \sum_{i \in [1,m] \cap I_\Lambda} n_i
= \limsup_{m \to \infty} \ell_m(x,\Lambda).
\]
From Proposition~\ref{PROP:b_j-lower-bound} we can infer the following structural property. 
\begin{prop}
\label{PROP:upper-density}
Let $S \subset \Delta$. Then, for every $x \in \bigl \{ (\underline{F},\overline{F}) \in S \bigr \}$ and for every $\Lambda \in \N$,
\[
D_\Lambda(x) \geqslant 1 - \sup\{f(\alpha,\beta): (\alpha,\beta) \in S \}.
\]
\end{prop}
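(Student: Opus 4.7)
The plan is to establish the pointwise bound $D_\Lambda(x) \geqslant 1 - f(\underline{F}(x), \overline{F}(x))$ for every $x \in \mathbb{X} \setminus \mc D$, after which the statement follows immediately from $(\underline{F}(x), \overline{F}(x)) \in S$ and monotonicity of the supremum. Set $\alpha = \underline{F}(x)$ and $\beta = \overline{F}(x)$.

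The degenerate case $(\alpha,\beta) = (0,0)$ I would treat separately: then $(0,0) \in S$, so by the chosen convention $\sup_S f \geqslant f(0,0) = 1$, making the claimed inequality trivially $D_\Lambda(x) \geqslant 0$. So I may assume $(\alpha,\beta) \in \Delta \setminus \{(0,0)\}$, which is exactly the hypothesis of Proposition~\ref{PROP:b_j-lower-bound}.

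Next, I would invoke Lemma~\ref{LEM:F-Delta-convergence} to get $\limsup_{m \to \infty} F^\Lambda_m(x) = \beta$, and extract a subsequence $(k_j)_{j \in \N}$ along which $F^\Lambda_{k_j}(x) \to \beta$. Simultaneously, Proposition~\ref{PROP:b_j-lower-bound} gives $\liminf_{k \to \infty} \varrho_k \geqslant \eta(\alpha,\beta)$, so in particular along the same subsequence we have $\liminf_{j \to \infty} \varrho_{k_j} \geqslant \eta(\alpha,\beta)$. Since by definition $\ell_{k_j} = \varrho_{k_j} \sqrt{F^\Lambda_{k_j}(x)}$, combining the two limits yields
\[
\liminf_{j \to \infty} \ell_{k_j} \;\geqslant\; \eta(\alpha,\beta)\sqrt{\beta} \;=\; 1 - f(\alpha,\beta),
\]
using the definition $\eta(\alpha,\beta) = (1 - f(\alpha,\beta))/\sqrt{\beta}$. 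Consequently,
\[
D_\Lambda(x) \;=\; \limsup_{m \to \infty} \ell_m \;\geqslant\; \liminf_{j \to \infty} \ell_{k_j} \;\geqslant\; 1 - f(\alpha,\beta),
\]
and since $(\alpha,\beta) \in S$, the right-hand side is at least $1 - \sup_S f$, completing the argument.

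There is essentially no serious obstacle: the whole proof is a one-line assembly of Proposition~\ref{PROP:b_j-lower-bound} (which does the heavy lifting) with the elementary identity relating $\ell_k$, $\varrho_k$ and $F^\Lambda_k(x)$, followed by a subsequence evaluation at the $\limsup$ value of $F^\Lambda_k(x)$. The only minor point requiring a line of comment is the degenerate case $(\alpha,\beta) = (0,0)$, which is why the convention $f(0,0) := 1$ introduced after \eqref{EQ:f-alpha-beta} is the natural one for this application.
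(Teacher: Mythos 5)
Your proposal is correct and follows essentially the same route as the paper: handle $(0,0)$ via the convention $f(0,0)=1$, pick a subsequence along which $F^\Lambda_{k}(x)\to\beta$, and combine $\ell_k=\varrho_k\sqrt{F^\Lambda_k(x)}$ with Proposition~\ref{PROP:b_j-lower-bound} to get $D_\Lambda(x)\geqslant\sqrt{\beta}\,\eta(\alpha,\beta)=1-f(\alpha,\beta)$. No gaps.
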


\begin{proof}
Since $f(0,0) = 1$ by convention, the lower bound is trivial if $(0,0) \in S$. We can hence restrict to the case $S \subset \Delta \setminus \{(0,0)\}$. Let $x$ be such that $\underline{F}(x) = \alpha$ and $\overline{F}(x) = \beta$ with $(\alpha,\beta) \in S$. Take an increasing subsequence $(k_m)_{m \in \N}$ such that $\lim_{m \to \infty} F_{k_m}^\Lambda(x) = \beta$.
Then, by Proposition~\ref{PROP:b_j-lower-bound},
\[
D_\Lambda(x) \geqslant \liminf_{m \to \infty} \sqrt{F_{k_m}^\Lambda(x)} \varrho_{k_m}(x) \geqslant \sqrt{\beta} \, \eta(\alpha,\beta) = 1 - f(\alpha,\beta) 
\geqslant 1 - \sup_S f(\alpha,\beta).
\]
Since $\Lambda \in \N$ was arbitrary, this is the desired statement.
\end{proof}

We proceed with two results that provide an estimate for the Hausdorff dimension of level sets of the density function $D_\Lambda$. First, we recall a standard estimate, including a short proof for the reader's convenience.

\begin{lemma}
\label{LEM:variational-local-dimension}
Let $\nu$ be a probability measure on $\mathbb{X}$ and $c > 0$. Then,
\[
\dim_H \{x \in \mathbb{X} : \underline{d}_{\nu} x \leqslant c \} \leqslant c.
\]
\end{lemma}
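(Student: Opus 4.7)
The plan is to execute the standard cover-by-cylinders argument, using that cylinders with large $\nu$-mass cannot be too numerous. First I fix $\varepsilon > 0$ and note that
\[
A := \{x \in \mathbb{X} : \underline{d}_\nu x \leqslant c \}
\subset A_{\varepsilon} := \{ x \in \mathbb{X} : \underline{d}_{\nu} x < c + \varepsilon \}.
\]
By the definition of $\underline{d}_\nu$, each point of $A_\varepsilon$ satisfies $\nu(C_n(x)) > 2^{-n(c+\varepsilon)}$ for infinitely many $n$. Hence
\[
A_\varepsilon \subset \bigcap_{N \in \N} \bigcup_{n \geqslant N} B_n,
\quad B_n := \bigl\{ x \in \mathbb{X} : \nu(C_n(x)) > 2^{-n(c+\varepsilon)} \bigr\}.
\]

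Next I observe that each $B_n$ is a union of level-$n$ cylinders, and since $\nu$ is a probability measure, the number of words $w \in \{0,1\}^n$ with $\nu([w]) > 2^{-n(c+\varepsilon)}$ is strictly less than $2^{n(c+\varepsilon)}$. Each such cylinder has diameter at most $2^{-n+1}$ in the chosen metric on $\mathbb{X}$.

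Now I estimate the Hausdorff $s$-measure for any $s > c + \varepsilon$. Using $\bigcup_{n \geqslant N} B_n$ as a cover of $A_\varepsilon$ by cylinders of diameter at most $2^{-N+1}$, I get
\[
\mc H^{s}_{2^{-N+1}}(A_\varepsilon)
\leqslant \sum_{n \geqslant N} 2^{n(c+\varepsilon)} \cdot \bigl(2^{-n+1}\bigr)^{s}
= 2^{s} \sum_{n \geqslant N} 2^{-n(s - c - \varepsilon)},
\]
which is the tail of a convergent geometric series and therefore tends to $0$ as $N \to \infty$. It follows that $\mc H^{s}(A_\varepsilon) = 0$ for every $s > c + \varepsilon$, hence $\dim_H A_\varepsilon \leqslant c + \varepsilon$. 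Since $A \subset A_\varepsilon$ and $\varepsilon > 0$ was arbitrary, $\dim_H A \leqslant c$.

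There is no real obstacle here; the only subtle point is the strict inequality in the definition of $B_n$, which is what caps the number of heavy cylinders by $2^{n(c+\varepsilon)}$ via the probability constraint $\sum_w \nu([w]) = 1$. Everything else is a routine covering estimate.
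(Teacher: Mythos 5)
Your proof is correct. It is the same standard Billingsley-type covering argument as in the paper, but the bookkeeping differs: the paper selects, for each point of the level set, a single heavy cylinder $C_{n(x)}(x)$ with $\nu(C_{n(x)}(x)) \geqslant r_{n(x)}^{c+\varepsilon}$, passes to a finite disjoint subcover, and bounds $\sum r_n^s \leqslant \sum \nu(C_n(x)) \leqslant 1$, concluding only that $\mc H^{s}$ is \emph{finite}; you instead take all heavy cylinders at every level $n \geqslant N$, bound their number per level by $2^{n(c+\varepsilon)}$ via the probability constraint, and sum a geometric series to get $\mc H^{s} = 0$. Both hinge on the same fact (a probability measure cannot charge too many cylinders of a given level too heavily), and both give $\dim_H \leqslant c+\varepsilon$ for every $\varepsilon>0$. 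Your version has the minor advantage of avoiding the paper's appeal to a finite disjoint subcover (which for cylinder covers is harmless but requires a word of justification), at the cost of a slightly cruder cover; the diameter bound $2^{-n+1}$ you use is not tight (level-$n$ cylinders have diameter $2^{-n}$ in this metric) but, as you note, this only affects the constant $2^{s}$ and not the conclusion.
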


\begin{proof}
Let $A(c) = \{ x \in \mathbb{X} : \underline{d}_{\nu}(x) \leqslant c \}$. Given $\varepsilon > 0$ and $n_0 \in \N$, choose for each $x \in A(c)$ a number $n = n(x) \geqslant n_0$ such that the cylinder $C_n(x)$ of diameter $r_n = 2^{-n}$ satisfies
\[
\nu(C_n(x)) \geqslant r_n^{c + \varepsilon}.
\] 
The open cover $F = \{C_n(x) \}_{x \in \mathbb{X}}$ has a finite subcover $G$, which can be chosen to be disjoint.
Given $s = c + \varepsilon$, we obtain
\[
\sum_{C_n(x) \in G} r_n^s \leqslant \sum_{C_n(x) \in G} \nu(C_n(x)) = 1,
\]
due to the fact that $\nu$ is a probability measure. Since $n_0 \in \N$ was arbitrary, this shows that the $s$-dimensional Hausdorff measure of $A(c)$ is finite.
Hence, $\dim_H(A(c)) \leqslant c + \varepsilon$ for all $\varepsilon > 0$ and the claim follows.
\end{proof}

\begin{lemma}
\label{LEM:D-c-upper-bound}
For $0 \leqslant c \leqslant 1$, let $B(c)$ be the set
\[
B(c) = \{ x \in \mathbb{X}\setminus \mc D : D_\Lambda(x) \geqslant 1 - c \mbox{ for all } \Lambda \in \N \}.
\]
Then, $\dim_H B(c) \leqslant c$.
\end{lemma}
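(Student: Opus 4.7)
The plan is to invoke Lemma~\ref{LEM:variational-local-dimension} with a carefully chosen Bernoulli-on-blocks measure. For each $\Lambda \in \N$, I would construct a probability measure $\nu_\Lambda$ on $\mathbb{X}$ such that every $x \in B(c)$ satisfies $\underline{d}_{\nu_\Lambda}(x) \leq c - \log_2 z_0(\Lambda)$ for a parameter $z_0(\Lambda) \in (0,1)$ with $z_0(\Lambda) \to 1$ as $\Lambda \to \infty$. Lemma~\ref{LEM:variational-local-dimension} would then give $\dim_H B(c) \leq c - \log_2 z_0(\Lambda)$ for each $\Lambda$, and letting $\Lambda \to \infty$ completes the proof.

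The measure $\nu_\Lambda$ is to be defined via the alternation coding: the initial bit is uniform in $\{0,1\}$, and the block lengths $(n_i)_{i \in \N}$ are sampled i.i.d.\ according to the distribution
\[
\pi_\Lambda(\ell) = \begin{cases}(z_0/2)^\ell, & \ell < \Lambda,\\ z_0^\ell, & \ell \geq \Lambda,\end{cases}
\]
where $z_0 = z_0(\Lambda) \in (0,1)$ is fixed so that $\sum_{\ell \geq 1} \pi_\Lambda(\ell) = 1$. Continuity and monotonicity of the left-hand side as a function of $z_0 \in (0,1)$ (rising from $0$ to $+\infty$ as $z_0 \to 1^-$ through the diverging tail $z_0^\Lambda/(1-z_0)$) ensure a unique root; solving the defining equation asymptotically should give $z_0(\Lambda) = 1 - O(\log \Lambda/\Lambda)$, so in particular $-\log_2 z_0(\Lambda) \to 0$.

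To estimate $\underline{d}_{\nu_\Lambda}$ on $B(c)$: given $x \in B(c)$ and $\eta > 0$, the hypothesis $D_\Lambda(x) \geq 1-c$ supplies infinitely many $m$ for which $S_m := \sum_{i \leq m,\,n_i < \Lambda} n_i \leq (c+\eta) N_m$. At $n = N_m$, expanding $\nu_\Lambda(C_n(x)) = \tfrac{1}{2} \prod_{i=1}^m \pi_\Lambda(n_i)$ via the piecewise form of $\pi_\Lambda$ and using $\sum_i n_i = n$ produces $\nu_\Lambda(C_n(x)) = \tfrac{1}{2} \cdot 2^{-S_m} \cdot z_0^n$, hence
\[
\frac{-\log_2 \nu_\Lambda(C_n(x))}{n} \leq (c+\eta) - \log_2 z_0(\Lambda) + \frac{1}{n}.
\]
Taking $\liminf$ along this subsequence and then $\eta \to 0$ yields $\underline{d}_{\nu_\Lambda}(x) \leq c - \log_2 z_0(\Lambda)$, which together with Lemma~\ref{LEM:variational-local-dimension} closes the argument.

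The delicate point is the choice of the piecewise weights: the factor $1/2$ in the small-block portion is precisely what makes each small-block position contribute exactly one bit to the $\nu_\Lambda$-cost, matching the constraint $S_m \leq (c+\eta) n$, while the pure geometric tail $z_0^\ell$ on large blocks contributes only $-\log_2 z_0$ per position and therefore vanishes as $z_0 \to 1$. The normalisation constraint forces $z_0$ close to $1$ by balancing the diverging tail $z_0^\Lambda/(1-z_0)$ against the mass lost in truncating the small-block part, and I expect the main technical step to be the asymptotic analysis of the transcendental equation defining $z_0(\Lambda)$; the remainder of the argument consists of elementary manipulations.
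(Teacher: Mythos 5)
Your proposal is correct, and it follows the same overall strategy as the paper --- construct an auxiliary probability measure whose lower local dimension on $B(c)$ is at most roughly $c$, then invoke Lemma~\ref{LEM:variational-local-dimension} --- but the auxiliary measure is genuinely different. The paper fixes a block length $m$, takes the $\sigma^m$-invariant Bernoulli measure on $\{0,1\}^m$-blocks giving weight $p=1/3$ to each of $0^m$ and $1^m$ and splitting the remaining third uniformly over the other $2^m-2$ words, and then must count how many $m$-blocks lie entirely inside the long alternation blocks of $x$ (whence the $\lfloor n_i/m\rfloor-2$ bookkeeping and the auxiliary parameter $k$ with $\Lambda=km$); the resulting bound is $c_m=\bigl(c\log(2^m-2)-\log p\bigr)/(m\log 2)\to c$. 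Your renewal-type measure built on the alternation coding makes the corresponding computation exact --- $\nu_\Lambda(C_{N_m}(x))\geqslant\tfrac12\,2^{-S_m}z_0^{N_m}$ with no combinatorial slack --- so each small-block symbol costs exactly one bit and every symbol costs an extra $-\log_2 z_0$. The price is that $\nu_\Lambda$ is not shift-invariant and is defined through the coding rather than directly on binary cylinders, but Lemma~\ref{LEM:variational-local-dimension} requires no invariance, so nothing is lost; if anything your computation is cleaner.

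Two minor points. First, the cylinder identity should be an inequality: $C_{N_m}(x)$ pins down $n_1,\dots,n_{m-1}$ exactly but only forces the $m$-th block of $y$ to have length at least $n_m$, so $\nu_\Lambda(C_{N_m}(x))\geqslant\tfrac12\prod_{i\leqslant m}\pi_\Lambda(n_i)$; since you need a lower bound on the measure, the inequality goes the right way. Second, the step you flag as the main technical difficulty --- the asymptotics of $z_0(\Lambda)$ --- is easier than you fear, because you only need $z_0(\Lambda)\to1$ and not a rate. The function $G_\Lambda(z)=\sum_{\ell<\Lambda}(z/2)^\ell+\sum_{\ell\geqslant\Lambda}z^\ell$ is continuous and strictly increasing from $0$ to $+\infty$ on $(0,1)$, so the root exists and is unique; and for any fixed $\delta>0$ one has $G_\Lambda(1-\delta)\leqslant\frac{1-\delta}{1+\delta}+\frac{(1-\delta)^\Lambda}{\delta}<1$ for all sufficiently large $\Lambda$, which forces $z_0(\Lambda)>1-\delta$ eventually. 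With these two remarks supplied, the argument is complete.
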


\begin{proof}
We fix large integer numbers $m,k \in \N$ and set $\Lambda = km$. Let $p = 1/3$ and define a $\sigma^m$-invariant (Bernoulli) measure $\nu$ on cylinders of length $m$ via
\[
\nu([w]) = \begin{cases}
p & \mbox{ if } w \in \{0^m,1^m\},
\\p \frac{1}{2^m - 2} & \mbox{ if } w \in \{0,1 \}^m \setminus \{ 0^m,1^m\}.
\end{cases}
\]
This is extended to a product measure via the relation
\[
\nu([w_1 \cdots w_n]) = \prod_{i=1}^n \nu([w_i]),
\] 
whenever each $w_i \in \{0,1\}^m$. 
For $x \in B(c)$ with alternation coding $(n_i)_{i \in \N}$ let $j$ be such that
\begin{equation}
\label{EQ:d-Lambda-finite-bound}
 \frac{1}{N_{j}} \sum_{i \in [1,j] \cap I_\Lambda} n_i \geqslant 1-c-\varepsilon.
\end{equation}
Decompose $x^j = x_1 \cdots x_{N_j}$ into blocks of length $m$, yielding
\[
x^j = w_1 \ldots w_{r_j} \widetilde{w},
\]
where $w_i \in \{0,1 \}^m$ and $1\leqslant |\widetilde{w}| \leqslant m$. Then, due to the product definition of $\nu$,
\[
\log \nu(C_{N_j}(x)) = \sum_{r=1}^{r_j} \log \nu([w_r]) + O(1).
\]
Let $r_j^* \leqslant r_j$ be the number of indices $r$ with $w_r \in \{0^m,1^m\}$. Then,
\begin{align*}
\log \nu(C_{N_j}(x))
& = r_j^* \log(p) + (r_j-r_j^*) \log(p/(2^m - 2)) + O(1)
\\ & = r_j \log p + (r_j^{\ast} - r_j) \log(2^m - 2) + O(1).
\end{align*}
Note that for every $i \in I_\Lambda$, the number $k_i$ of words $w_r$ that are completely contained in the corresponding block of length $n_i$ satisfies
\[
k_i \geqslant \left\lfloor \frac{n_i}{m} \right\rfloor - 2.
\]
Since $n_i \geqslant \Lambda = m k$, we can choose $k$ large enough to ensure
\[
k_i \geqslant \frac{n_i}{m} (1-\varepsilon).
\] 
Hence, using \eqref{EQ:d-Lambda-finite-bound}, the number $r_j^*$ is bounded below via
\[
r_j^* \geqslant (1-\varepsilon) \frac{1}{m} \sum_{i \in [1,j] \cap I_\Lambda} n_i
\geqslant (1-\varepsilon) \frac{N_j}{m}(1 - c - \varepsilon).
\] 
As a result we get
\begin{align*}
\frac{\log \nu(C_{N_j}(x))}{N_j} 
& \geqslant \frac{r_j}{N_j} \log p - \Bigl( \frac{r_j}{N_j} - \frac{1}{m}(1-\varepsilon)(1-c-\varepsilon) \Bigr)\log(2^m - 2) + o(1)
 \\ &\xrightarrow{j \to \infty} \frac{\log p}{m} -  \frac{1}{m}\bigl(1 - (1-\varepsilon)(1-c-\varepsilon)\bigr) \log(2^m - 2).
\end{align*}
Since $\varepsilon > 0$ was arbitrary, it follows that
\[
\underline{d}_{\nu}(x) 
\leqslant \liminf_{j \to \infty} \frac{\log \nu(C_{N_j}(x))}{- N_j \log 2} 
\leqslant \frac{c \log (2^m - 2)}{m \log 2}  - \frac{\log p}{m \log 2} =: c_m.
\] 
Since this holds for all points in $B(c)$ it follows by Lemma~\ref{LEM:variational-local-dimension} that
\[
\dim_H B(c) \leqslant c_m \xrightarrow{m \to \infty} c,
\]
which indeed implies that $\dim_H B(c) \leqslant c$.
\end{proof}

\begin{coro}
\label{COR:upper-main}
For $S \subset \Delta$, we have $\dim_H \bigl\{ (\underline{F},\overline{F}) \in S \bigr\} \leqslant \sup_S f(\alpha,\beta)$.
\end{coro}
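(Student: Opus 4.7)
The plan is to observe that this corollary is an essentially immediate consequence of the two ingredients just established, namely Proposition~\ref{PROP:upper-density} and Lemma~\ref{LEM:D-c-upper-bound}. Set $c := \sup_{(\alpha,\beta) \in S} f(\alpha,\beta)$, which lies in $[0,1]$ since the range of $f$ is contained in $[0,1]$. If $c = 1$ the inequality is trivial because $\dim_H \mathbb{X} = 1$, so we may assume $c < 1$.

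First I would invoke Proposition~\ref{PROP:upper-density}: every $x \in \{(\underline{F}, \overline{F}) \in S\}$ satisfies $D_\Lambda(x) \geqslant 1 - c$ for every $\Lambda \in \N$ simultaneously. In other words,
\[
\bigl\{ (\underline{F}, \overline{F}) \in S \bigr\} \subseteq B(c),
\]
where $B(c)$ is the set defined in Lemma~\ref{LEM:D-c-upper-bound}. The crucial point here is that Proposition~\ref{PROP:upper-density} yields the density lower bound uniformly in $\Lambda$, which is exactly what the ``for all $\Lambda$'' quantifier in the definition of $B(c)$ demands.

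Next I would apply Lemma~\ref{LEM:D-c-upper-bound} directly to $B(c)$, which gives $\dim_H B(c) \leqslant c$. The monotonicity of Hausdorff dimension under set inclusion therefore yields
\[
\dim_H \bigl\{ (\underline{F}, \overline{F}) \in S \bigr\}
\leqslant \dim_H B(c) \leqslant c = \sup_{(\alpha,\beta) \in S} f(\alpha,\beta),
\]
which is precisely the desired bound. There is no real obstacle in this step; the whole work has already been absorbed into Proposition~\ref{PROP:upper-density} (establishing the density lower bound via the renormalized block densities $\varrho_k$) and Lemma~\ref{LEM:D-c-upper-bound} (the mass-distribution style argument based on the auxiliary Bernoulli measure $\nu$). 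Combining this corollary with Corollary~\ref{COR:lower-main} then completes the proof of Theorem~\ref{THM:main}.
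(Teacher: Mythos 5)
Your proposal is correct and follows exactly the paper's own argument: both set $c = \sup_S f(\alpha,\beta)$, use Proposition~\ref{PROP:upper-density} to conclude $\{(\underline{F},\overline{F}) \in S\} \subset B(c)$, and then apply Lemma~\ref{LEM:D-c-upper-bound} together with monotonicity of Hausdorff dimension. The extra remark disposing of the case $c=1$ is harmless but unnecessary, since the lemma already covers all $c \in [0,1]$.
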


\begin{proof}
Let $c = \sup_S f(\alpha,\beta)$. Due to Proposition~\ref{PROP:upper-density}, we have $D_\Lambda(x) \geqslant 1 -c$ for all $x \in \bigl\{ (\underline{F},\overline{F}) \in S \bigr\}$ and $\Lambda \in \N$. That is, $\bigl\{ (\underline{F},\overline{F}) \in S \bigr\} \subset B(c)$ in the notation of Lemma~\ref{LEM:D-c-upper-bound}, implying that $\dim_H \bigl\{ (\underline{F},\overline{F}) \in S \bigr\} \leqslant \dim_H B(c) \leqslant c $.
\end{proof}

\begin{proof}[Proof of Theorem~\ref{THM:main}]
The lower bound in Theorem~\ref{THM:main} is given in Corollary~\ref{COR:lower-main} and the upper bound is provided by Corollary~\ref{COR:upper-main}.
\end{proof}

\section*{Achknowledgements}
The authors want to thank the Institut Mittag-Leffler  for kind hospitality during the research program ``Two Dimensional Maps" where part of this work was concluded.
PG acknowledges support from the German Research Foundation (DFG), through grant GO 3794/1-1.

\end{document}